\pgfplotsset{compat=1.15}
\theoremstyle{plain}
\newtheorem{teo}{Theorem}[section]
\newtheorem{prop}[teo]{Proposition}
\newtheorem{cor}[teo]{Corollary}
\newtheorem{lem}[teo]{Lemma}
\theoremstyle{definition}
\newtheorem{defi}[teo]{Definition}
\theoremstyle{remark}
\newtheorem{rema}[teo]{Remark}
\newtheorem{exemple}[teo]{Example}
\newtheorem{notat}[teo]{Notation}
\DeclareMathOperator{\Hom}{Hom}
\DeclareMathOperator{\Op}{Op}
\DeclareMathOperator{\End}{End}
\DeclareMathOperator{\SymMon}{SymMon}
\DeclareMathOperator{\emb}{emb}
\DeclareMathOperator{\Moore}{Moore}
\DeclareMathOperator{\Conf}{Conf}
\DeclareMathOperator{\UConf}{UConf}
\DeclareMathOperator{\id}{id}
\DeclareMathOperator{\Top}{Top}
\DeclareMathOperator{\Pos}{Pos}
\DeclareMathOperator{\HT}{HT}
\DeclareMathOperator{\Ob}{Ob}
\DeclareMathOperator{\Mor}{Mor}
\DeclareMathOperator{\im}{im}
\DeclareMathOperator{\Aut}{Aut}
\DeclareMathOperator{\dist}{dist}
\DeclareMathOperator{\aaaaah}{max}
\DeclareMathOperator{\pc}{pc}
\newcommand{\iso}{\overset{\sim}{=}}
\newcommand{\op}{\mathcal{O}}
\newcommand{\C}{\mathcal{C}}
\newcommand{\s}{\mathcal{S}}
\newcommand{\E}{\mathcal{E}}
\newcommand{\F}{\mathcal{F}}
\newcommand{\V}{\mathcal{V}}
\newcommand{\G}{\mathcal{G}}
\title{Stable homology of Higman--Thompson groups via scanning methods}
\author{Marie-Camille Delarue}
\thanks{Université Paris Cité and Sorbonne Université, CNRS, IMJ-PRG, F-75013 Paris, France.}
\begin{document}

\begin{abstract}
    The Higman--Thompson groups $V_{n,r}$ consist of piecewise linear automorphisms of $r$ intervals where cut points and slopes are $n$-adic.
    Szymik and Wahl prove homological stability for this family of groups as $r$ increases, and compute the stable homology to be that of the infinite loop space of the Moore spectrum.
    We give a new proof of this result using scanning methods on a topological model for the disjoint union of these groups.
    We use Thumann's framework of operad groups to build this model.
\end{abstract}

\maketitle
\tableofcontents

\section{Introduction}

The Thompson groups were introduced by Thompson in the 1960s in the context of word problems, and as potential candidates to disprove the von Neumann conjecture \cite{thompson}. 
There are three original Thompson groups, $F$, $T$, and $V$, and they arise as subgroups of piecewise linear bijections of an interval where all points of non-differentiability are dyadic. 
It is useful to think of them as groups of paired diagrams of forests as in \cite{skipperwu}.
For instance, a piecewise linear map between two intervals is given by two ways of cutting an interval into a certain number of pieces and then a permutation which matches the pieces together.
This can be represented by tree diagrams (see figure \ref{illustrationgroup} and the formal definition in section \ref{trees}).
Higman later introduced generalizations of these families of groups \cite{higman}, now called the Higman--Thompson groups. 
Our particular objects of study, the groups $V_{n,r}$, are subgroups of piecewise linear bijections of a disjoint union of $r$ intervals, where the cut points are $n$-adic numbers.

\begin{figure}
    \centering
    \begin{minipage}{0.4\linewidth}
        \def\svgwidth{\hsize}
        \begingroup%
  \makeatletter%
  \providecommand\color[2][]{%
    \errmessage{(Inkscape) Color is used for the text in Inkscape, but the package 'color.sty' is not loaded}%
    \renewcommand\color[2][]{}%
  }%
  \providecommand\transparent[1]{%
    \errmessage{(Inkscape) Transparency is used (non-zero) for the text in Inkscape, but the package 'transparent.sty' is not loaded}%
    \renewcommand\transparent[1]{}%
  }%
  \providecommand\rotatebox[2]{#2}%
  \newcommand*\fsize{\dimexpr\f@size pt\relax}%
  \newcommand*\lineheight[1]{\fontsize{\fsize}{#1\fsize}\selectfont}%
  \ifx\svgwidth\undefined%
    \setlength{\unitlength}{196.50713102bp}%
    \ifx\svgscale\undefined%
      \relax%
    \else%
      \setlength{\unitlength}{\unitlength * \real{\svgscale}}%
    \fi%
  \else%
    \setlength{\unitlength}{\svgwidth}%
  \fi%
  \global\let\svgwidth\undefined%
  \global\let\svgscale\undefined%
  \makeatother%
  \begin{picture}(1,0.47636163)%
    \lineheight{1}%
    \setlength\tabcolsep{0pt}%
    \put(0,0){\includegraphics[width=\unitlength,page=1]{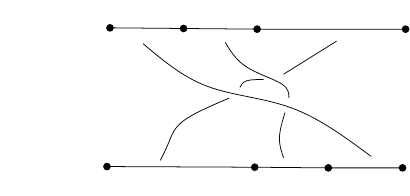}}%
    \put(0.33313195,0.43730477){\color[rgb]{0,0,0}\makebox(0,0)[lt]{\lineheight{1.25}\smash{\begin{tabular}[t]{l}$1$\end{tabular}}}}%
    \put(0.51003654,0.43500732){\color[rgb]{0,0,0}\makebox(0,0)[lt]{\lineheight{1.25}\smash{\begin{tabular}[t]{l}$2$\end{tabular}}}}%
    \put(0.78343444,0.43270986){\color[rgb]{0,0,0}\makebox(0,0)[lt]{\lineheight{1.25}\smash{\begin{tabular}[t]{l}$3$\end{tabular}}}}%
    \put(0.38827098,0.00767948){\color[rgb]{0,0,0}\makebox(0,0)[lt]{\lineheight{1.25}\smash{\begin{tabular}[t]{l}$3$\end{tabular}}}}%
    \put(0.6846435,0.00767954){\color[rgb]{0,0,0}\makebox(0,0)[lt]{\lineheight{1.25}\smash{\begin{tabular}[t]{l}$2$\end{tabular}}}}%
    \put(0.87073781,0.00538208){\color[rgb]{0,0,0}\makebox(0,0)[lt]{\lineheight{1.25}\smash{\begin{tabular}[t]{l}$1$\end{tabular}}}}%
  \end{picture}%
\endgroup%

    \end{minipage}
    \begin{minipage}{0.4\linewidth}
        \begin{center}
        \begin{forest}
            for tree = {s sep =  10pt, l-=5mm, inner sep = 1.5pt, if n children=0{tier=terminal}{}}
            [, circle, fill = black, draw,
            [, circle, fill = black, draw,
              [, circle, fill = black, draw,
              [, circle, label = {[font=\scriptsize]south:1}, fill = black, draw][, circle, label = {[font=\scriptsize]south:2}, fill = black, draw]
              ][, circle, label = {[font=\scriptsize]south:3}, fill = black, draw]
            ]]
        \end{forest}
    \end{center}
    \begin{center}
        \begin{forest}
            for tree = {s sep =  10pt, l-=5mm, inner sep = 1.5pt, if n children=0{tier=terminal}{}}
            [, circle, fill = black, draw, grow=north,
            [, circle, fill = black, draw, grow=north,
            [, circle, fill = black, draw,grow=north,
              [, circle, label = {[font=\scriptsize]north:1}, fill = black, draw,grow=north][, circle, label = {[font=\scriptsize]north:2}, fill = black, draw,grow=north]
              ]
              [, circle, label = {[font=\scriptsize]north:3}, fill = black, draw,grow=north]
            ]]
        \end{forest}
    \end{center}
    \end{minipage}
\begin{tikzpicture}
\end{tikzpicture}
\caption{Illustration of an element in Thompson's group $V$}
\label{illustrationgroup}
\end{figure}

\paragraph{\textbf{Homological stability.}} A natural phenomenon to investigate is whether the homology of these groups stabilizes when the number $r$ of intervals is big enough.
Szymik and Wahl prove homological stability for the family of groups $V_{n,r}$ when $r$ goes to infinity and also compute their stable homology \cite{szymikwahl}.
In this particular example, the homology stabilizes instantly.
This result was reproved as an application of a wide framework for scissors congruence groups by Kupers, Lemann, Malkiewich, Miller and Sroka \cite{KLMMS}.
It also appears in work of Tanner \cite{tanner} building on Li's work on stable homology of topological full groups \cite{li}.
In this paper, we give an different proof of theorem \ref{mainthm} which expresses the stable homology of the Higman--Thompson groups, i.e. the homology of the colimit $V_{n,\infty}$, as the homology of a certain infinite loop space.

\begin{teo}[\cite{szymikwahl}]\label{mainthm}
    For all $n\geq 2$, there is a homology equivalence :
\[ BV_{n,\infty} \overset{H_*}{\simeq} \Omega^\infty_0 \mathbb{M}_{n-1}.\]
\end{teo}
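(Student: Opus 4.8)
The plan is to realise $\coprod_{r}BV_{n,r}$ as the classifying space of a symmetric monoidal topological category coming from Thumann's operad groups, and then to compute its group completion by a scanning argument, recognising the answer as $\Omega^{\infty}\mathbb{M}_{n-1}$. First I would fix the operad $\op_n$ underlying the Higman--Thompson groups in Thumann's classification---freely generated, as a symmetric operad, by a single $n$-ary operation---so that $V_{n,r}$ is its operad group on $r$ colours. Its associated category of forests $\C_n$ has the natural numbers as objects and $n$-adic forests together with a matching of their leaves as morphisms; localising at the forests produces a symmetric monoidal groupoid with $\Aut(r)=V_{n,r}$, the monoidal product being disjoint union of strands, and with classifying space $\coprod_{r}BV_{n,r}$. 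Because the generating operation has $n$ leaves and one root, the objects $1$ and $n$ become isomorphic after localisation, so $\pi_0$ of this monoid is generated by the class $[1]$ of a single strand subject to $[1]=n[1]$, and group-completes to $\mathbb{Z}/(n-1)$. Thumann's framework moreover furnishes a genuinely topological model: a moduli space $\mathcal{M}_n$ of $n$-adic forests embedded in Euclidean space, with roots and leaves on prescribed parallel hyperplanes, whose classifying space is weakly equivalent to that of $\C_n$.

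\textbf{Group completion.} Juxtaposition of forests makes $\mathcal{M}_n$ an $E_{\infty}$-monoid. By the group-completion theorem of McDuff and Segal, $H_*\!\big(\Omega B\mathcal{M}_n\big)$ is the localisation of $H_*(\mathcal{M}_n)$ at $\pi_0$; restricting to the basepoint component identifies $H_*\!\big((\Omega B\mathcal{M}_n)_0\big)$ with $\operatorname{colim}_r H_*(BV_{n,r})$ along the stabilisation maps $\varphi\mapsto\varphi\oplus\operatorname{id}$. Homological stability for the family---which, as Szymik and Wahl observe, holds in the strongest possible range, and which here can either be quoted from \cite{szymikwahl} or re-derived by the standard argument from the high connectivity of an associated semisimplicial space of splittings---then gives $H_*\!\big((\Omega B\mathcal{M}_n)_0\big)\cong H_*(BV_{n,\infty})$. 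Since $\Omega B\mathcal{M}_n$ is an infinite loop space, write it as $\Omega^{\infty}\mathbf{k}$ for a connective spectrum $\mathbf{k}$ with $\pi_0\mathbf{k}=\mathbb{Z}/(n-1)$; what remains is to prove $\mathbf{k}\simeq\mathbb{M}_{n-1}$.

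\textbf{Scanning.} As in Galatius's treatment of automorphism groups of free groups and the McDuff--Segal picture of configuration spaces, I would introduce the scanning map from $\mathcal{M}_n$ to the space of compactly supported sections of the bundle whose fibre is the one-point compactification of the space of germs of embedded $n$-adic forests. Standard manipulations rewrite this section space as $\Omega^{\infty}\mathbf{k}'$ for an explicit Thom-type spectrum $\mathbf{k}'$ built from the germ spaces, and the central step is to prove that scanning is a homology equivalence after group completion, so that $\mathbf{k}\simeq\mathbf{k}'$. This should reduce, by a locality/descent argument, to checking that the sheaf of embedded $n$-adic forests is microflexible in the sense of Gromov, so that $\mathcal{M}_n$ is recovered from its germs; setting up the geometry of embedded $n$-adic forests so that it is at once rigid enough to run the $h$-principle and flexible enough for this comparison is the step I expect to be the principal obstacle.

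\textbf{The spectrum.} It then remains to compute $\mathbf{k}'$. Away from its branch locus an $n$-adic forest germ is just a finite set of parallel strands, and a branch germ records a single $n$-adic branching, which is rigid up to contractible choice; filtering the Thom spectrum by the number of branch points therefore assembles $\mathbf{k}'$ from copies of the sphere spectrum, with the one generating operation contributing an Euler-characteristic shift of $n-1$---its $n$ leaves against its single root---so that on $\pi_0$ the attaching map is multiplication by $n-1$. A closer analysis of the germ spectrum shows that it carries no homology in positive degrees, whence $H_i(\mathbf{k}')=0$ for $i\neq 0$ and $H_0(\mathbf{k}')=\mathbb{Z}/(n-1)$, i.e.\ $\mathbf{k}'\simeq\mathbb{M}_{n-1}$; this vanishing---the geometric counterpart of the vanishing of the higher $K$-groups in \cite{szymikwahl}, and the reason stability here is instantaneous---is the conceptual heart of the argument. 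Combining it with the group-completion step yields $BV_{n,\infty}\overset{H_*}{\simeq}\Omega^{\infty}_0\mathbb{M}_{n-1}$, which is Theorem \ref{mainthm}.
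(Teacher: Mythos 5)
Your outline follows essentially the same route as the paper: Thumann's operad-group description of $V_{n,r}$, a moduli space of embedded $n$-ary forests as a topological model for the associated PROP, a scanning comparison with a space of germs, identification of the germ spectrum with $\mathbb{M}_{n-1}$, and McDuff--Segal group completion at the end. The key conceptual points are all correctly placed, including the observation that the one-vertex germ contributes an attaching map of degree $n-1$ (its $n$ leaves against its one root) and that the resulting spectrum has homology $\mathbb{Z}/(n-1)$ concentrated in degree $0$. One small simplification: you do not need homological stability anywhere. Since $V_{n,\infty}$ is defined as a colimit, $\operatorname{colim}_r H_*(BV_{n,r})\cong H_*(BV_{n,\infty})$ automatically; what you do need, and omit, is the periodicity isomorphism $V_{n,r}\cong V_{n,r+(n-1)}$, which is what makes the group-completion telescope split as $\{1,\dots,n-1\}\times BV_{n,\infty}$ and lets you pass from the full mapping telescope to a single component.

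The two steps you flag as the hard ones are indeed where the work lies, and both are left as assertions. For the scanning equivalence you propose a Gromov-style microflexibility/h-principle argument for the sheaf of embedded forests; the paper instead deloops one coordinate at a time, producing spaces $\Phi_k^N$ of forests allowed to escape to infinity in $k$ directions and proving $\Phi_{k-1}^N\simeq\Omega\Phi_k^N$ via semi-simplicial resolutions, Segal's lemma, and microfibration-with-contractible-fibers arguments (this requires first proving that the embedding space $\E_\infty(T)$ of a fixed forest is contractible, a nontrivial holey-configuration-space computation you do not mention). Either route could in principle work, but yours would require setting up the h-principle machinery for a rather singular ``sheaf,'' which is not obviously easier. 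For the germ computation, ``a closer analysis shows'' hides the actual content: the paper covers the space of local images by the open sets $U_1$ (at most one strand near the origin) and $U_n$ (exactly one vertex near the origin), shows $U_1\simeq S^N$, $U_n\simeq *$, $U_1\cap U_n\simeq S^N$, and identifies the gluing map $S^N\to S^N$ as a pinch-and-collapse map of degree $n-1$, so that the pushout is the cofiber $\mathbb{M}^N_{n-1}$; one must then also check compatibility with the spectrum structure maps to upgrade the levelwise identification to one of spectra. Your Euler-characteristic heuristic predicts the right answer but is not a substitute for this verification, which is the heart of the proof.
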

Here $\Omega^\infty_0 \mathbb{M}_{n-1}$ is the basepoint component of the infinite loop space underlying the mod $(n-1)$ Moore spectrum, defined as the cofiber of the map $\mathbb{S}\xrightarrow{\times(n-1)}\mathbb{S}$.

\paragraph{\textbf{Scanning techniques.}}
The scanning techniques we use were originally developed to study the stable homology of configuration spaces \cite{mcduff} and spaces of holomorphic functions of closed surfaces \cite{segal79}, 
and then adapted to the homology of mapping class groups of surfaces \cite{madsenweiss} and of higher dimensional manifolds \cite{gmtw,grwI,sorenrwModuliMonoids}.
Galatius \cite{soren} then adapted these methods to moduli spaces of graphs to compute the stable homology of automorphism groups of free groups.

The main idea of scanning is to exhibit a space of embeddings of certain topological objects as a loop space of local images of those objects.
Consider, for instance, a space of embeddings of a tree $T$ in $\mathbb{R}^{N}$, denoted by $\emb(T,\mathbb{R}^{N})$. 
The scanning map is defined on $\mathbb{R}^N\times \emb(T,\mathbb{R}^{N})$ by sending a pair $(x,\phi)$ to what is visible of $\phi$ in a neighborhood of the point $x$, as if there were a magnifying lens at $x$ (see figure \ref{scanningtrees}).
This map factors through the sphere $S^N$. By adjunction, this yields a map from $\emb(T,\mathbb{R}^{N})$ into an $N$-loop space.
\begin{figure}[h!]
    \centering
    \tiny
    \def\svgwidth{\hsize}
    \begingroup%
  \makeatletter%
  \providecommand\color[2][]{%
    \errmessage{(Inkscape) Color is used for the text in Inkscape, but the package 'color.sty' is not loaded}%
    \renewcommand\color[2][]{}%
  }%
  \providecommand\transparent[1]{%
    \errmessage{(Inkscape) Transparency is used (non-zero) for the text in Inkscape, but the package 'transparent.sty' is not loaded}%
    \renewcommand\transparent[1]{}%
  }%
  \providecommand\rotatebox[2]{#2}%
  \newcommand*\fsize{\dimexpr\f@size pt\relax}%
  \newcommand*\lineheight[1]{\fontsize{\fsize}{#1\fsize}\selectfont}%
  \ifx\svgwidth\undefined%
    \setlength{\unitlength}{453.90062227bp}%
    \ifx\svgscale\undefined%
      \relax%
    \else%
      \setlength{\unitlength}{\unitlength * \real{\svgscale}}%
    \fi%
  \else%
    \setlength{\unitlength}{\svgwidth}%
  \fi%
  \global\let\svgwidth\undefined%
  \global\let\svgscale\undefined%
  \makeatother%
  \begin{picture}(1,0.29158413)%
    \lineheight{1}%
    \setlength\tabcolsep{0pt}%
    \put(0,0){\includegraphics[width=\unitlength,page=1]{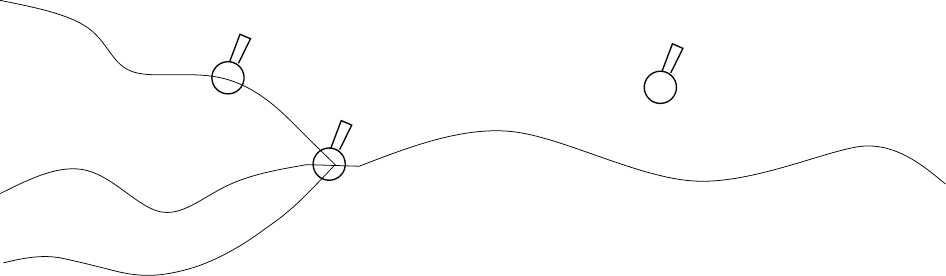}}%
  \end{picture}%
\endgroup%

    \caption{Philosophy of the scanning map}
    \label{scanningtrees}
    \end{figure}
This identification enables us to compute the aforementioned space of embeddings.

\paragraph{\textbf{Proof strategy.}}
In order to put this to use, we rely on work of Thumann \cite{thumann} concerning operads, i.e., objects that encode algebraic structures.
For instance, the little cubes operads encode structures which are only commutative up to certain homotopies.
Thumann establishes a link between the Higman--Thompson groups and the PROPs associated to certain suboperads of the little cubes operads, called cube cutting operads.
We then build a topological model for the groups as a kind of cobordism category where objects are configurations of points and morphisms are embeddings of trees. 
We can then construct a scanning map to study the homotopy type of this category.
This is an elaboration on our previous paper about the stable homology of symmetric groups \cite{delarue0}.

We plan to generalize this approach to groups of bijections of $d$-dimensional cubes for $d>1$ (already studied by Thumann \cite{thumann} and Kupers--Lemann--Malkiewicz--Miller--Sroka \cite{KLMMS}) in future work.
The higher dimensional cube cutting operads are Boardman--Vogt tensor products of the 1-dimensional cube cutting operad, so we will need to build a topological model which encodes this product.
The advantage of the methods developed here is that they provide a general framework for studying groups that can be modeled topologically by embedded combinatorial objects.

\paragraph{\textbf{Organization of the paper.}} In section \ref{exposition}, we provide some background on simplicial and semi-simplicial spaces, as well as other technical tools.
We introduce the Higman--Thompson groups and the connection to operads established by Thumann in section \ref{trees}.
Section \ref{embeddings} sets up the topological model for the Higman--Thompson groups.
Our proof of theorem \ref{mainthm} is detailed in several steps in section \ref{deloopings} where the scanning machinery is set up, 
and section \ref{zoom} which studies the loop space obtained after scanning.

\paragraph{\textbf{Acknowledgements.}}
The author is grateful to Najib Idrissi and Nathalie Wahl for their supervision and encouragement, as well as to S{\o}ren Galatius for initially suggesting the argument.
The author was partially supported by the project ANR-22-CE40-0008 SHoCoS, and by the Danish National Research Foundation through the Copenhagen Center for Geometry and Topology (DNRF151).

\section{Preliminaries}\label{exposition}

Let us here briefly recall the basics of (semi-)simplicial spaces, as well as some useful technical lemmas.
The model for $\Top$ used here is that of compactly generated Hausdorff spaces equipped with the Serre model structure.
Let $\Delta$ be the category with objects the finite sets $[n] = \{0 < \dots < n\}$ and with morphisms from $[n]$ to $[m]$ the order-preserving set maps.
A simplicial space is a functor $\Delta^{op}\rightarrow \Top$.
A cosimplicial space is a functor $\Delta \rightarrow \Top$.
We denote the standard topological $n$-simplex by $\Delta^n_{\Top}$:
\[\Delta^n_{\Top} \colonequals \{x\in\mathbb{R}^{n+1} \mid x_i \geq 0 \text{ for } 0\leq i\leq n, \text{ and } x_0+\dots +x_n = 1.\}\]
These form a cosimplicial space in a natural way.
Let $X_\bullet$ be a simplicial space. 
Its geometric realization $\vert X_\bullet \vert$ is defined as the space $\displaystyle \sqcup_n \left( X_n\times \Delta^n_{\Top}\right)/\sim$ where $(x, f_*p)\sim (f^*x,p)$ for every simplicial map $f:[k]\rightarrow [l]$.

We work with categories that have some extra structure, namely, they are internal to topological spaces.
\begin{defi}\cite[section~2]{segal68}
    A category internal to topological spaces $\C$ (or topological category) is given by a pair of topological spaces, called the object space $\Ob(\C)$ and the morphism space $\Mor(\C)$, 
    together with continuous maps:
    \begin{itemize}
        \item $u:\Ob(\C)\rightarrow \Mor(\C)$ which sends an object to the identity morphism;
        \item $s,t:\Mor(\C)\rightarrow \Ob(\C)$ which send an arrow to its source (resp. target);
        \item $\circ:\Mor(\C)\times_{t,s}\Mor(\C)\rightarrow \Mor(\C)$, which is the composition of morphisms.
    \end{itemize}
    These maps have to satisfy the following conditions:
    \begin{align*}
        (f\circ g)\circ h = f\circ(g\circ h)\ \ \ \ \ &t(f\circ g) = t(f) &s(f\circ g) = s(g)\\
        t\circ u = s\circ u = \id\ \ \ \ \ &f\circ u(t(f)) = f &u(s(f))\circ f = f.
    \end{align*}
\end{defi}
If there is no map $u$, the category is said to be non-unital \cite{ebertbible}.
\begin{defi}\cite[Definition~3.1]{ebertbible}
    A non-unital topological category $\C$ is said to have \emph{weak left units} if for every object $c\in\Ob(\C)$, there exists a morphism $u:c\rightarrow c'$ such that the induced map 
    \[\Hom_\C(-,c)\xrightarrow{u\circ} \Hom_\C(-,c')\]
    is a weak homotopy equivalence.
    The category $\C$ is said to have \emph{weak right units} if for every object $c\in\Ob(\C)$, there exists a morphism $u:c'\rightarrow c$ such that the induced map 
    \[\Hom_\C(c,-)\xrightarrow{\circ u} \Hom_\C(c',-)\]
    is a weak homotopy equivalence.
\end{defi}
\begin{defi}\cite[Definition~3.5]{ebertbible}
    A topological category is \emph{fibrant} if the map $(s,t): \Mor(\C)\rightarrow \Ob(\C)\times\Ob(\C)$ is a fibration.
\end{defi}

Let us now introduce semi-simplicial spaces, which we will often prefer to simplicial spaces.
Let $\Delta_+$ be the subcategory of $\Delta$ which contains all objects but only the injective maps.
A semi-simplicial space is a functor $\Delta_+^{op}\rightarrow \Top$.
There is a forgetful functor from simplicial spaces to semi-simplicial spaces, induced by the inclusion $\Delta_+\hookrightarrow \Delta$. It amounts to forgetting the degeneracies.

There is a notion of geometric realization for semi-simplicial spaces, called the \emph{thick geometric realization}, $\vert\vert X_\bullet \vert\vert$, defined as $\displaystyle \sqcup_n \left(X_n\times \Delta^n_{\Top}\right)/\sim$ where $(x, f_*p)\sim (f^*x,p)$ for every face map $f:[k]\hookrightarrow [l]$ in $\Delta_+$.
It is called ``thick'' because there are no degeneracies in semi-simplicial spaces to collapse degenerate simplices.
When $X_\bullet$ is a simplicial space, the geometric realization is a quotient of the thick geometric realization.

The nerve of a topological category $\C$ is the simplicial space whose $n$-simplices are given by chains of $n$ composable morphisms:
\[(N\C)_n = \Mor(\C)\times_{\Ob(\C)} \dots \times_{\Ob(\C)} \Mor(\C).\]
The classifying space $B\C$ of a category $\C$ is the geometric realization of its nerve.
We denote by $\mathbb{B}\C$ the thick geometric realization of its nerve, seen as a semi-simplicial space.
When a category is non-unital, its classifying space is only a semi-simplicial space.

The usual notion of Segal space \cite{segal68} extends to semi-simplicial spaces.
Recall that for $X_\bullet$ a semi-simplicial space, the $n$th Segal map is the map
\[X_n \rightarrow X_1\times_{X_0}\dots\times_{X_0} X_1\] which comes from the maps $X_n\rightarrow X_1$ induced by the following maps $a^{i}$ in the simplex category:
\begin{align*}
    a^{i}:[1]&\rightarrow [n],\ \ \ \ 0\mapsto i, \ \ \ \ 1\mapsto i+1, \ \ \ \ 0\leq i \leq n-1
\end{align*}
A result of Segal \cite[Proposition~1.5]{segal74} states that for a simplicial Segal space such that $X_1$ is path-connected, there is a weak equivalence $X_1\simeq\Omega\vert\vert X_\bullet\vert\vert$.
This actually extends to semi-simplicial spaces:
\begin{lem}\label{semiSegal}
Let $X_\bullet$ be a semi-simplicial Segal space where $X_1$ is path-connected. Then there is a weak equivalence $X_1\simeq\Omega\vert\vert X_\bullet\vert\vert$.
\end{lem}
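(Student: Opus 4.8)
The plan is to deduce this from the simplicial statement of Segal quoted just above. The only real gap between the two settings is that a semi-simplicial space carries no degeneracy maps, so $\|X_\bullet\|$ is not literally the classifying space of a unital topological category; the bridge will be the machinery of non-unital topological categories with weak units recalled above from \cite{ebertbible}.

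First I would pass to a Reedy fibrant replacement of $X_\bullet$, which alters the weak homotopy type neither of $X_1$ nor of the thick realization $\|X_\bullet\|$ and makes the Segal maps into fibrations. Choosing a section of the second Segal map $X_2\xrightarrow{\sim}X_1\times_{X_0}X_1$ produces a composition law, so that $X_\bullet$ gets identified with the nerve of a fibrant non-unital topological category $\C$ with $\Ob(\C)=X_0$, $\Mor(\C)=X_1$, and $\mathbb{B}\C=\|X_\bullet\|$; the full Segal condition guarantees that this nerve loses no information. Next I would use the path-connectedness of $X_1=\Mor(\C)$, together with the Segal condition, to check that $\C$ has weak left and right units — the point being that near any object one can find a morphism whose pre- and post-composition induce weak equivalences on $\Hom$-spaces. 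Granting this, I would invoke the results of \cite{ebertbible} to pass from $\C$ to a genuine topological category $\C^{+}$ equipped with a comparison functor inducing weak equivalences $\Ob(\C^{+})\simeq X_0$, $\Mor(\C^{+})\simeq X_1$, and $B\C^{+}\simeq\mathbb{B}\C$.

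Now $N\C^{+}$ is a simplicial Segal space whose space of $1$-simplices is weakly equivalent to $X_1$, hence path-connected, so the cited Proposition of Segal applies to it and gives $(N\C^{+})_1\simeq\Omega B\C^{+}$. Concatenating the weak equivalences then yields $X_1\simeq(N\C^{+})_1\simeq\Omega B\C^{+}\simeq\Omega\,\mathbb{B}\C=\Omega\|X_\bullet\|$, which is the assertion.

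The step I expect to be the main obstacle is the middle one: checking that path-connectedness of $X_1$ really produces weak units in $\C$, and that the unitalisation of \cite{ebertbible} applies while keeping all the identifications of object spaces, morphism spaces, and classifying spaces under control. The Reedy fibrant replacement, the construction of $\C$, and the final chain of equivalences should be routine bookkeeping.
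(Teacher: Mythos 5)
Your outline correctly identifies its own weak point, and unfortunately that weak point is fatal: path-connectedness of $X_1$ does \emph{not} imply that the associated non-unital category has weak units, and the Segal condition does not help. Concretely, take $X_0=*$ and $X_p=M^p$ for $M=S^1$ equipped with the constant (hence associative) multiplication $m(x,y)=x_0$; this is the semi-simplicial nerve of a non-unital topological monoid, the Segal maps are homeomorphisms, and $X_1=S^1$ is path-connected, yet pre- or post-composition with any element is the constant map to $x_0$, so there are no weak units. Worse, a direct computation with the skeletal spectral sequence of the thick realization shows $\vert\vert X_\bullet\vert\vert$ is $2$-connected here ($H_1=H_2=0$ and $\pi_1=0$), so $\Omega\vert\vert X_\bullet\vert\vert$ is simply connected while $X_1=S^1$ is not. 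In the simplicial setting the degeneracy $s_0:X_0\to X_1$ is exactly what supplies the units that make Segal's Proposition 1.5 work; in the semi-simplicial setting unitality must be imposed as a hypothesis (as it is, implicitly, in every application in this paper — e.g.\ the constant-path morphisms of $\HT$ and the ``shift by $[t_0,t_1]$'' morphisms in the delooping argument provide weak units). So no argument can derive weak units from the stated hypotheses; your middle step has to be replaced by an added assumption.

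There is a second, more technical gap in your strictification step. A section of the Segal map $X_2\to X_1\times_{X_0}X_1$ (even after Reedy fibrant replacement) produces a composition that is associative only up to homotopy, governed by $X_3$; it does not exhibit $X_\bullet$ as the nerve of a strict non-unital topological category, since that would require all Segal maps to be homeomorphisms rather than weak equivalences. The Ebert--Randal-Williams machinery you want to invoke is formulated for strict non-unital categories, so you would either need a genuine rectification theorem or, better, a version of the delooping argument carried out directly at the level of semi-simplicial Segal spaces — for instance comparing $X_1$ with the fiber of the realization of a one-sided path/bar construction over $\vert\vert X_\bullet\vert\vert$, which is where the weak unit is again indispensable to produce a contractible total space. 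The Reedy replacement and the final chain of equivalences are fine, but as written the proof does not go through.
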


We prefer working with semi-simplicial spaces because they offer more flexibility and need less structure.
One of the main results we need in order to work with semi-simplicial spaces is proposition \ref{levelwisereal}.
It implies that if two semi-simplicial spaces are levelwise equivalent, then their thick realizations are equivalent, which is not true in general for simplicial spaces and regular realizations.
In order to state this proposition in detail, we need the following definition:
\begin{defi}
A map $p:E\rightarrow B$ is called a Serre microfibration if, for every $k\geq 0$ and every commutative square
\begin{center}
    \begin{tikzcd}
    D^k\times\{0\}\arrow{d} \arrow{r} &E\arrow{d}{p}\\
    D^k\times[0,1]\arrow{r} &B,
    \end{tikzcd}
\end{center}
there exists $\epsilon>0$ and a lifting $h:D^k\times [0,\epsilon]\rightarrow E$ such that the following diagram commutes:
\begin{center}
\begin{tikzcd}[row sep = 0.4cm, column sep = 0.4cm]
D^k\times\{0\} \arrow[rr] \arrow[dd] \arrow[dr] && E\arrow[dd]\\
&D^k\times[0,\epsilon] \arrow[dl]\arrow[ur,dashrightarrow]\\
D^k\times[0,1]\arrow[rr] &&B.
\end{tikzcd}
\end{center}
\end{defi}

\begin{exemple}
    A Serre fibration is a Serre micro-fibration.
    Let $p:E\rightarrow B$ be a Serre fibration, and $V$ an open subset of $E$. 
    Then the restriction of $p$ to $V$ is a Serre microfibration, but not necessarily a Serre fibration (see for instance Raptis in \cite{raptis}.)
\end{exemple}

\begin{lem}\cite[Lemma~2.2]{weisslemma}
A Serre micro-fibration with weakly contractible fibers is a Serre fibration, and therefore a weak homotopy equivalence.
\end{lem}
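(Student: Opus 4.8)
I would reconstruct Weiss's argument as follows. Unravelling the definitions, to prove that $p\colon E\to B$ is a Serre fibration I must solve an arbitrary homotopy lifting problem specified by maps $f\colon D^k\times\{0\}\to E$ and $g\colon D^k\times[0,1]\to B$ with $pf=g|_{D^k\times\{0\}}$. Since Serre microfibrations are preserved by pullback, I would first replace $p$ by $g^{*}p\colon g^{*}E\to D^k\times[0,1]$, which is again a Serre microfibration and still has weakly contractible fibres; the datum $f$ then becomes a partial section of $g^{*}p$ over the subspace $D^k\times\{0\}$, and the task becomes: extend this partial section to a section defined on all of $D^k\times[0,1]$. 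The plan is to build the extension by a stepwise procedure, lifting over successively larger compact pieces of $D^k\times[0,1]$ (for instance over the cells of a relative CW structure rel $D^k\times\{0\}$, or, after a Zorn's lemma set-up, over an increasing family of closed subsets). The two hypotheses play complementary roles: the microfibration property supplies a lift over each small piece --- concretely, from a partial section already defined near the bottom of a prism $N\times[0,1]$, with $N$ a small disk, it produces an extension over $N\times[0,\epsilon]$ for some $\epsilon>0$ --- while the weak contractibility of the fibres is what makes it possible to glue such locally defined lifts together and to fill in the remaining cells.

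The delicate point, which I expect to be the main obstacle, is precisely this gluing-and-limiting step. Propagating the short-time lifts furnished by the microfibration property forward in the $[0,1]$-direction yields a lift over $N\times[0,s)$ for some $s$, and one must both pass to the limit as $t\to s$ and reconcile lifts obtained over overlapping pieces that a priori agree only on a closed subset rather than on a neighbourhood; similarly, extending a lift from the boundary sphere of a cell across the cell requires filling a centre point where $g$ is nearly constant. In each of these situations the relevant extension or adjustment problem is controlled by a fibre of $p$: because the fibres are weakly contractible, the space of possible completions is non-empty and, inductively, these choices can be made compatibly. Organising this bookkeeping cleanly is genuinely nontrivial --- unions of closed sets need not be closed, so the poset used in the Zorn argument and the inductive scheme have to be chosen with care --- and for the precise implementation I would follow Weiss \cite{weisslemma}.

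Finally, granting that $p$ is a Serre fibration, the concluding assertion is immediate. A weakly contractible space is in particular non-empty, so every fibre of $p$ is non-empty and $p$ is surjective; the long exact sequence of homotopy groups of the fibration $F_b\to E\to B$ with $\pi_{*}(F_b)=0$ shows that $p_{*}\colon\pi_n(E,e)\to\pi_n(B,p(e))$ is an isomorphism for every $n\ge 1$ and every basepoint $e$, while on $\pi_0$ the map is surjective because $p$ is surjective and injective because path lifting together with path-connectedness of the fibres identifies any two points of $E$ lying over a common point of $B$. Hence $p$ is a weak homotopy equivalence.
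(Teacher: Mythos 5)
The paper does not actually prove this lemma---it is quoted from Weiss \cite{weisslemma} and used as a black box---so there is no in-paper argument to compare against. Your outline is consistent with the standard proof (Weiss's original, or the streamlined version in Galatius--Randal-Williams): pull the microfibration back along $g$ so that the problem becomes extending a partial section over $D^k\times\{0\}$ to all of $D^k\times[0,1]$, use the microfibration property to produce lifts over small pieces, and use weak contractibility of the fibres to extend over cells and to reconcile lifts that agree only on closed overlaps, after a suitable subdivision. You correctly identify the complementary roles of the two hypotheses and locate the genuine difficulty (passing to the limit of short-time lifts, and the gluing/filling steps), and your closing paragraph deducing the weak homotopy equivalence---non-emptiness of fibres gives surjectivity, the long exact sequence handles $\pi_n$ for $n\geq 1$, and path lifting plus path-connected fibres gives injectivity on $\pi_0$---is complete and correct. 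What is missing is the execution of the central extension argument: the precise inductive scheme (subdivision of $D^k\times[0,1]$, the cell-by-cell extension, and the compression of a candidate lift into a single fibre using weak contractibility) is exactly the content of Weiss's proof, and you explicitly defer it to the reference. As a self-contained proof this is therefore an outline of the right shape rather than a complete argument, which is an acceptable standard here given that the paper itself cites the result without proof.
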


\begin{prop}\label{levelwisereal}\cite[Propositions~2.7-2.8]{grwI}.
\begin{itemize}
\item[$\bullet$] Let $f_\bullet:X_\bullet\rightarrow Y_\bullet$ be a map of semi-simplicial spaces and $n\in\mathbb{N}\cup\{\infty\}$ such that for all $p\in\mathbb{N}$, $f_p:X_p\rightarrow Y_p$ is $(n-p)$-connected. Then $||f_\bullet||:||X_\bullet||\rightarrow ||Y_\bullet||$ is $n$-connected.
\item[$\bullet$] Consider a semi-simplicial set $Y$ (viewed as a discrete space) and a Hausdorff space $Z$. Let $X_\bullet \subset Y_\bullet \times Z$ be a sub-semi-simplicial space which is an open subspace in every degree.
Then the map $\pi:||X_\bullet|| \rightarrow Z$ is a Serre microfibration.
\end{itemize}
\end{prop}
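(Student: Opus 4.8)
The two bullet points are logically independent, so I would treat them in turn.

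\textbf{First bullet.} The plan is to induct along the skeletal filtration of the thick realization. Write $F_k$ for the image in $\vert\vert X_\bullet\vert\vert$ of $\bigsqcup_{p\leq k}X_p\times\Delta^p_{\Top}$ and $F_k'$ for the corresponding stage of $\vert\vert Y_\bullet\vert\vert$, so that $F_{-1}=\varnothing$, $F_0=X_0$, $\vert\vert X_\bullet\vert\vert=\bigcup_kF_k$, and for each $k$ there is a pushout square with top edge $X_k\times\partial\Delta^k_{\Top}\to F_{k-1}$, left edge the cofibration $X_k\times\partial\Delta^k_{\Top}\hookrightarrow X_k\times\Delta^k_{\Top}$, and bottom-right corner $F_k$. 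Thus the cofiber of $F_{k-1}\hookrightarrow F_k$ is $(X_k)_+\wedge(\Delta^k_{\Top}/\partial\Delta^k_{\Top})\cong\Sigma^k(X_k)_+$, and $f_\bullet$ induces $\Sigma^k(f_k)_+$ on cofibers. The arithmetic is the whole point: $f_k$ is $(n-k)$-connected by hypothesis, hence so is $(f_k)_+$, and since a $k$-fold suspension raises connectivity by $k$, the map of cofibers $\Sigma^k(f_k)_+$ is $n$-connected for \emph{every} $k$. An induction on $k$ now gives that $F_k\to F_k'$ is $n$-connected — the base case $k=0$ is the hypothesis on $f_0$, and the inductive step compares the cofiber sequences $F_{k-1}\to F_k\to\Sigma^k(X_k)_+$ and $F_{k-1}'\to F_k'\to\Sigma^k(Y_k)_+$ through their long exact homology sequences, with van Kampen's theorem controlling $\pi_0$ and $\pi_1$ — and since the $k$-th stage attaches cells of dimension $\geq k$, the map $F_k\to\vert\vert X_\bullet\vert\vert$ is $k$-connected, so passing to the colimit over $k$ (which also covers $n=\infty$) shows $\vert\vert f_\bullet\vert\vert$ is $n$-connected.

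\textbf{Second bullet.} Here I would unwind what $\pi$ looks like on the building blocks. Since $Y_p$ is discrete, $X_p=\bigsqcup_{y\in Y_p}U_{p,y}\times\{y\}$ with $U_{p,y}=\{z\in Z\mid(y,z)\in X_p\}$ open in $Z$, and $\pi$ restricted to the block $U_{p,y}\times\Delta^p_{\Top}$ is simply the projection $(z,q)\mapsto z$, i.e.\ $U_{p,y}\times\Delta^p_{\Top}\to U_{p,y}\hookrightarrow Z$, so it is a fibration on each block. Moreover, closure of $X_\bullet$ under face maps gives the monotonicity $U_{p,y}\subseteq U_{q,\mu^*y}$ for every injection $\mu\colon[q]\hookrightarrow[p]$: passing to a face only enlarges the open set. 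Given a lifting problem against $D^k\times\{0\}\hookrightarrow D^k\times[0,1]$, one subdivides $D^k$ finely enough (using compactness and the CW structure of $\vert\vert Y_\bullet\vert\vert$) that the map $D^k\times\{0\}\to\vert\vert X_\bullet\vert\vert$ is controlled on each simplex, then builds the lift by following the prescribed path in $Z$ while, block by block, retracting the $\Delta$-coordinate onto a face exactly when the path would otherwise exit the relevant $U_{p,y}$ — which is always possible because $U_{p,y}\subseteq U_{q,\mu^*y}$. Compactness of $D^k\times[0,1]$ then yields a uniform $\epsilon>0$ on which the lift is defined. (Morally this says $\pi$ is, locally over $Z$, close to a trivial fibration, and the Serre microfibration property is local on the target.)

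\textbf{Where the difficulty lies.} For the first bullet the only delicate point is the low-dimensional $\pi_0$/$\pi_1$ bookkeeping needed to turn the homological comparison of cofiber sequences into an honest connectivity statement; the rest is the usual skeletal-filtration machinery. The real work is in the second bullet: even though $\pi$ is a genuine fibration on every simplexwise block, $\vert\vert X_\bullet\vert\vert$ need \emph{not} be open in $\vert\vert Y_\bullet\vert\vert\times Z$, so one cannot simply quote ``the restriction of a fibration to an open subset is a Serre microfibration''. The crux is to assemble the block-wise lifts coherently across the face identifications of the thick realization, with a uniform $\epsilon$, and this is exactly what the combination of a fine subdivision of the parameter space and the face-monotonicity of the open sets $U_{p,y}$ is designed to achieve.
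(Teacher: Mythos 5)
The paper itself offers no proof of this proposition: it is quoted from Galatius--Randal-Williams \cite{grwI}, so your attempt has to be measured against the original argument there. For the first bullet your skeletal-filtration strategy is the right one, and identifying the filtration quotients with $\Sigma^k(X_k)_+$ together with the arithmetic $(n-k)+k=n$ is exactly the mechanism that makes the statement work. The gap is in the inductive step. Knowing that $F_{k-1}\to F_{k-1}'$ is $n$-connected and that the induced map of cofibers is $n$-connected does \emph{not} imply that $F_k\to F_k'$ is $n$-connected: cofiber sequences have no long exact sequence of homotopy groups, and the detour through homology that you propose only yields an integral homology isomorphism in a range plus an isomorphism on $\pi_1$. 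For non-simply-connected spaces this is strictly weaker than $n$-connectivity --- the relative Hurewicz theorem needs control of the $\pi_1$-action, i.e.\ homology with local coefficients, which the cofiber $F_k/F_{k-1}$ with constant coefficients cannot see --- and nothing in the hypotheses makes $X_p$ or $Y_p$ simply connected (for $p$ close to $n$ the map $f_p$ may be only $0$- or $1$-connected). The honest inductive step is purely homotopy-theoretic: after replacing each $f_p$ by a cofibration via levelwise mapping cylinders, the pair $\left(Y_k\times\Delta^k,\ X_k\times\Delta^k\cup Y_k\times\partial\Delta^k\right)$ is $n$-connected by the product formula for connectivity of pairs, since $(n-k)+(k-1)+1=n$, and one compresses a map of $(D^j,\partial D^j)$ with $j\le n$ off the top filtration stage using the gluing lemma for pushouts along cofibrations. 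No Hurewicz or van Kampen argument is needed, and none suffices.

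For the second bullet you have correctly isolated the two structural facts that make the statement true --- the decomposition $X_p=\bigsqcup_{y\in Y_p}\{y\}\times U_{p,y}$ with $U_{p,y}$ open in $Z$, and the monotonicity $U_{p,y}\subseteq U_{q,\mu^*y}$ under face maps --- and you are right that the naive lift (fix the barycentric coordinate, move the $Z$-coordinate along $B$) fails and must be corrected by degenerating towards faces. But the sentence about ``retracting the $\Delta$-coordinate onto a face exactly when the path would otherwise exit the relevant $U_{p,y}$'' is where the entire proof lives, and it conceals two real difficulties. First, the sets $A^{-1}(e_y^\circ)$ on which the constraint $B(d,s)\in U_{p,y}$ applies are open rather than closed, so compactness of $D^k$ does not directly produce a uniform $\epsilon$: as $d$ approaches the preimage of a proper face, the constraint set jumps to the \emph{larger} $U_{q,\mu^*y}$, so the supremal admissible $s$ is not obviously lower semi-continuous. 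Second, continuity of the corrected lift must be verified in the quotient topology on $\vert\vert X_\bullet\vert\vert$, which is strictly finer than the subspace topology inherited from $\vert\vert Y_\bullet\vert\vert\times Z$; a family of points in an open cell whose $Z$-coordinates approach the frontier of $U_{p,y}$ need not converge in $\vert\vert X_\bullet\vert\vert$ to the corresponding point of a face even when it does so in $\vert\vert Y_\bullet\vert\vert\times Z$. These two issues pull in opposite directions (the first forces you to degenerate the barycentric coordinate, the second limits how you may do so), and reconciling them is the actual content of the cited result; your sketch asserts the outcome rather than establishing it.
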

A statement analogous to the first point for the regular geometric realization is false unless the simplicial space satisfies certain conditions, mentioned in definition~\ref{proper}.

\begin{defi}\cite[Definition~A.4]{segal74}\label{proper}
A simplicial space $X_\bullet$ is good if the maps $s_i(X_{p-1})\rightarrow X_p$ are closed Hurewicz cofibrations for every $i$ and $p$.
\end{defi}

\begin{lem}\label{realizations}\cite[Proposition~A.1.(iv)]{segal74}
Let $X$ be a simplicial space. If $X_\bullet$ is good, then the quotient map $||X_\bullet||\rightarrow |X_\bullet|$ is a weak equivalence.
\end{lem}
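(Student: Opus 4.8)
\textbf{Proof strategy for Lemma~\ref{realizations}.}
The plan is to compare the two realizations by filtering both by skeleta and analyzing the quotient map one simplicial degree at a time. Recall that $|X_\bullet|$ is obtained from $||X_\bullet||$ by quotienting out the degenerate simplices: there is a well-known description of $|X_\bullet|$ as a coequalizer over all of $\Delta$, while $||X_\bullet||$ is the corresponding coequalizer over $\Delta_+$ only, and the comparison map $q:||X_\bullet||\to|X_\bullet|$ collapses the extra identifications coming from the degeneracy maps $s_i$. Both spaces carry skeletal filtrations: $\mathrm{sk}_n||X_\bullet||$ and $\mathrm{sk}_n|X_\bullet|$, built by attaching $X_n\times\Delta^n_{\Top}$ (respectively, its quotient by the degenerate part) along the $n$-skeleton of $\partial\Delta^n_{\Top}$. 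Since $q$ is filtration-preserving and everything in sight is a colimit of the filtration stages, it suffices to show that $q$ restricts to a weak equivalence $\mathrm{sk}_n||X_\bullet||\to\mathrm{sk}_n|X_\bullet|$ for every $n$, and then pass to the colimit (using that the filtration inclusions are cofibrations, which follows from goodness, so the colimit computes the homotopy colimit and weak equivalences are preserved).

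The heart of the argument is the inductive step: assuming $q$ is a weak equivalence on $(n-1)$-skeleta, show it is one on $n$-skeleta. Here one writes each $n$-skeleton as a pushout. For the thick realization,
\[
\mathrm{sk}_n||X_\bullet|| \;=\; \mathrm{sk}_{n-1}||X_\bullet|| \;\cup_{X_n\times\partial\Delta^n_{\Top}}\; X_n\times\Delta^n_{\Top},
\]
and for the ordinary realization one has the analogous pushout but with $X_n$ replaced by the ``non-degenerate quotient'' $X_n/(\text{degenerate simplices})$ and with the attaching data suitably modified; concretely, $|X_\bullet|$ is built by attaching along the latching object $L_nX = \mathrm{colim}_{[n]\twoheadrightarrow[k],\,k<n} X_k$, i.e.\ one attaches $X_n\times\Delta^n_{\Top}$ along the subspace $(L_nX\times\Delta^n_{\Top})\cup(X_n\times\partial\Delta^n_{\Top})$ to $\mathrm{sk}_{n-1}|X_\bullet|$. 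The goodness hypothesis enters precisely to guarantee that $L_nX\hookrightarrow X_n$ is a closed Hurewicz cofibration (this is the content of Definition~\ref{proper}, since the latching map is built from the $s_i(X_{n-1})\hookrightarrow X_n$), so that all the pushouts above are homotopy pushouts and the two constructions can be compared by the gluing lemma. One then checks that the natural map from the thick pushout to the ordinary pushout is, on each piece, either the identity ($X_n\times\Delta^n_{\Top}$), the inductive equivalence ($(n-1)$-skeleta), or a map $X_n\times\partial\Delta^n_{\Top}\to (L_nX\times\Delta^n_{\Top})\cup(X_n\times\partial\Delta^n_{\Top})$ whose relevant homotopy-pushout comparison collapses the degenerate cells that were already glued in lower skeleta — and this collapse is a weak equivalence because, inductively, the degenerate part $L_nX\times\Delta^n_{\Top}$ has already been ``filled in'' and is absorbed into $\mathrm{sk}_{n-1}$. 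Applying the gluing lemma for homotopy pushouts then promotes the inductive equivalence from $(n-1)$- to $n$-skeleta.

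The main obstacle, and the place where I expect to spend the most care, is bookkeeping the attaching maps in the ordinary realization: the ordinary geometric realization is \emph{not} literally obtained by cellwise attaching $X_n\times\Delta^n_{\Top}$ the way the thick one is, because the Eilenberg–Zilber / Reedy combinatorics identify each simplex with a unique nondegenerate one, and translating this into a clean pushout square requires either the latching-object formalism or an explicit analysis of which faces of $\Delta^n_{\Top}$ get collapsed. Getting the homotopy pushout square for $\mathrm{sk}_n|X_\bullet|$ right — in particular verifying that the relevant maps are cofibrations so that strict pushouts compute homotopy pushouts — is exactly where goodness is indispensable, and it is the one step where one genuinely cannot be cavalier. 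Once that square is in hand, the rest is a routine application of the gluing lemma and a colimit argument. Since this is a classical result of Segal, in the write-up I would either give this skeletal induction in outline or simply cite \cite[Proposition~A.1.(iv)]{segal74} and \cite{tomdieck} for the details, depending on how self-contained the section needs to be.
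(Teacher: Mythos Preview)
The paper gives no proof of this lemma: it is stated with a citation to \cite[Proposition~A.1.(iv)]{segal74} and nothing more. Your skeletal-induction sketch is the standard argument behind that reference (and its later expositions, e.g.\ in tom Dieck or Ebert--Randal-Williams), and the outline is correct, including the identification of the key point that goodness makes the latching maps $L_nX\hookrightarrow X_n$ cofibrations so that the pushout squares for $\mathrm{sk}_n|X_\bullet|$ are homotopy pushouts. Your own closing suggestion --- to simply cite Segal --- is exactly what the paper does, so there is no discrepancy in approach to report.
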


Because many of the simplicial spaces we use are nerves of categories, we need a condition to check directly whether the simplicial and semi-simplicial nerves of a category are equivalent.
We will say that a topological category $\C$ is \emph{well-pointed} if its nerve $N\C$ is a good simplicial space \cite{TopQuillenA}. 

\begin{defi}\label{NDRgood}
For $B$ a topological space, let $\mathrm{CGH}/B$ be the category of compactly generated Hausdorff spaces over $B$. 
A neighborhood-deformation-retract pair (or NDR pair) over $B$ is a pair $(X,A)$ in $\mathrm{CGH}/B$, with $A\subset X$, such that there exist maps $u:X\rightarrow I\times B$ and $h: X\times I\rightarrow X$
satisfying the following conditions:
\begin{enumerate}
    \item $A = u^{-1}(\{0\}\times B)$,
    \item $h(-,0) = \id_X$,  $h\vert_{A\times I} = pr_A$,
    \item $h(x,1)\in A$ for $x\in u^{-1}([0,1)\times B)$,
\end{enumerate}
These are maps of $\mathrm{CGH}/B$ so they have to make the following diagrams commute:
\begin{center}
\begin{minipage}{0.4\linewidth}
\begin{center}
    \begin{tikzcd}
    X\arrow[dr]\arrow[rr,"u"] && I\times B \arrow[dl]\\
    &B
    \end{tikzcd}
\end{center}
\end{minipage}
\begin{minipage}{0.4\linewidth}
\begin{center}
    \begin{tikzcd}
    X\times I\arrow[dr]\arrow[rr,"h"] && X \arrow[dl]\\
    &B.
    \end{tikzcd}
\end{center}
\end{minipage}
\end{center}
Note that the map $X\times I\rightarrow B$ does not need to be the projection to $X$ composed with the map $X\rightarrow B$.
\end{defi}

\begin{prop}\cite[Proposition~10]{TopQuillenA}
Let $\C$ be a topological category. \\
If $(\Mor(\C), \Ob(\C))$ is an NDR-pair over $\Ob(\C)\times \Ob(\C)$, then $N\C$ is a good simplicial space.
\end{prop}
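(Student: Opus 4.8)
The plan is to show that for every $p\geq 1$ and every $0\leq i\leq p-1$ the degeneracy map $s_i\colon (N\C)_{p-1}\to (N\C)_p$ is a closed Hurewicz cofibration, which is exactly the definition of $N\C$ being good. I would rely on the standard fact that a pair of spaces $(X,A)$ is an NDR pair in the absolute sense (the case $B=\mathrm{pt}$ of Definition~\ref{NDRgood}) if and only if the inclusion $A\hookrightarrow X$ is a closed Hurewicz cofibration, so the whole task reduces to promoting the given relative NDR structure on $(\Mor(\C),\Ob(\C))$ over $\Ob(\C)\times\Ob(\C)$ to an NDR structure on the pair $\bigl((N\C)_p,\ s_i((N\C)_{p-1})\bigr)$.

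First I would fix notation. A $p$-simplex of $N\C$ is a composable chain $(g_1,\dots,g_p)\in\Mor(\C)^{\times_{\Ob(\C)} p}$ with $s(g_j)=t(g_{j+1})$, and $(N\C)_0=\Ob(\C)$. One checks that the image of $s_i$ is the subspace $A_{i,p}\colonequals\{(g_1,\dots,g_p)\mid g_{i+1}\in u(\Ob(\C))\}$ of chains whose $(i+1)$st arrow is an identity, and that $s_i$ is a homeomorphism onto $A_{i,p}$, its inverse being the deletion of the $(i+1)$st entry. This subspace is closed, since the identity morphisms form the closed subspace $u^{-1}(\{0\}\times\Ob(\C)^2)$ of $\Mor(\C)$. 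It therefore remains to produce continuous maps $\widetilde u\colon (N\C)_p\to I$ and $\widetilde h\colon (N\C)_p\times I\to (N\C)_p$ witnessing that $\bigl((N\C)_p, A_{i,p}\bigr)$ is an NDR pair.

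Let $(u,h)$ be the given relative NDR structure on $(\Mor(\C),\Ob(\C))$ over $\Ob(\C)^2$. The crucial consequence of the hypothesis --- and the point that makes the construction work --- is that $h$ is a deformation through morphisms with unchanged source and target, i.e.\ $(s,t)\circ h=(s,t)\circ\mathrm{pr}_{\Mor(\C)}$; this is what it means for $h$ to be a map over $\Ob(\C)^2$, the structure map $\Mor(\C)\times I\to\Ob(\C)^2$ being $(s,t)\circ\mathrm{pr}_{\Mor(\C)}$ as in the situations where the proposition is applied. I would then define $\widetilde u(g_1,\dots,g_p)=\mathrm{pr}_I\bigl(u(g_{i+1})\bigr)$ and $\widetilde h\bigl((g_1,\dots,g_p),\tau\bigr)=\bigl(g_1,\dots,g_i,\,h(g_{i+1},\tau),\,g_{i+2},\dots,g_p\bigr)$, deforming only the $(i+1)$st arrow and leaving all the others fixed.

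The one genuinely load-bearing verification --- which I expect to be the main obstacle --- is that $\widetilde h$ actually lands in $(N\C)_p$, i.e.\ that the deformed tuple is still composable: this holds because replacing $g_{i+1}$ by $h(g_{i+1},\tau)$ changes neither $t(g_{i+1})=s(g_i)$ nor $s(g_{i+1})=t(g_{i+2})$, precisely by the source and target preservation just recorded; without that property a deformation of one arrow could break composability with its neighbours and the naive $\widetilde h$ would be undefined. Granting this, the three NDR axioms are inherited coordinatewise from those of $(u,h)$: $\widetilde u^{-1}(0)=A_{i,p}$ by axiom~(1) for $(\Mor(\C),\Ob(\C))$; $\widetilde h(-,0)=\id$ since $h(-,0)=\id$; $\widetilde h$ restricts to the projection on $A_{i,p}\times I$ since $h|_{\Ob(\C)\times I}=\mathrm{pr}_{\Ob(\C)}$ fixes identity morphisms; and if $\widetilde u(g_1,\dots,g_p)<1$ then $g_{i+1}\in u^{-1}([0,1)\times\Ob(\C)^2)$, hence $h(g_{i+1},1)\in\Ob(\C)$ and $\widetilde h((g_1,\dots,g_p),1)\in A_{i,p}$. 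Continuity of $\widetilde u$ and $\widetilde h$ is clear, as both are assembled from $u$, $h$, and coordinate projections. Hence each $s_i$ is a closed Hurewicz cofibration and $N\C$ is a good simplicial space.
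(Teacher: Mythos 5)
The paper does not prove this proposition; it is quoted from \cite{TopQuillenA}, so there is no internal proof to compare against. Your reconstruction is the standard argument and is correct: identify $\mathrm{im}(s_i)$ as the closed subspace of chains whose $(i+1)$st entry is an identity, check $s_i$ is a homeomorphism onto it, and transport the relative NDR data coordinatewise, the whole point of the ``over $\Ob(\C)\times\Ob(\C)$'' hypothesis being exactly that the deformation of the $(i+1)$st arrow preserves its source and target and hence composability of the chain. You have correctly isolated that as the load-bearing step, and the verification of the three NDR axioms for $(\widetilde u,\widetilde h)$ is routine as you say. The one point to be careful about is your interpretation of the hypothesis: you need $(s,t)\circ h=(s,t)\circ\mathrm{pr}_{\Mor(\C)}$, i.e.\ the structure map of $\Mor(\C)\times I$ in $\mathrm{CGH}/\Ob(\C)^2$ to be $(s,t)\circ\mathrm{pr}$, whereas the paper's Definition~\ref{NDRgood} explicitly remarks that the structure map on $X\times I$ need not be of that form; taken literally that remark would make the ``$h$ is a map over $B$'' condition vacuous (one could always declare the structure map to be $(s,t)\circ h$) and the proposition would fail, so the reading you adopt --- a fiber-preserving (vertical) homotopy --- is the one actually needed and presumably intended in the cited source. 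You state this assumption explicitly, so I count it as a correct proof under the correct reading of the definition rather than a gap.
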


\subsection{Object change in classifying spaces of categories}
Let us introduce a technical result about classifying spaces of categories proven by Ebert and Randal-Williams \cite{ebertbible}.
It states that we can sometimes change the homotopy type of the object space of a topological category without changing the homotopy type of its classifying space.
\begin{defi}\cite[section~5]{ebertbible}\label{basechangeobjects}
Let $\C$ be a (not necessarily unital) topological category, and $f$ a continuous map $X\rightarrow \Ob(\C)$. We define the category $f^*\C$ as follows.
The space of objects is $X$. The map $f$ is therefore a map $F_0:\Ob(f^*\C)\rightarrow \Ob(\C)$. The space of morphisms of $f^*\C$ is the pullback
\begin{center}
\begin{tikzcd}
    \Mor(f^*\C) \arrow{r}{F_1}\arrow{d} &\Mor(\C)\arrow{d}{(s,t)}\\
    \Ob(f^*\C)\times\Ob(f^*\C)\arrow{r}{F_0\times F_0} &\Ob(\C)\times\Ob(\C).
\end{tikzcd}
\end{center}
The maps $s,t:\Mor(f^*\C)\rightarrow \Ob(f^*\C)$ are given by the left hand part of the diagram. 
Composition follows from the universal property of the pullback.
Combining $F_0$ and $F_1$ yields a continuous functor $F:f^*\C\rightarrow \C$.
\end{defi}

\begin{prop}\label{basechangeproperty}\cite[Theorem~5.2]{ebertbible}
    If $\C$ is a non-unital topological category which is fibrant and has weak right (or left) units, and the map $f:X\rightarrow \Ob(\C)$ is 0-connected, then $\mathbb{B}F: \mathbb{B}f^*\C\rightarrow \mathbb{B}\C$ is a weak equivalence.
\end{prop}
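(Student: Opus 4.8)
The plan rests on the observation that $F\colon f^{*}\C\to\C$ is \emph{fully faithful}: the pullback square defining $f^{*}\C$ in Definition~\ref{basechangeobjects} identifies $\Hom_{f^{*}\C}(x,x')$ with $\Hom_{\C}(f(x),f(x'))$ homeomorphically, for all $x,x'\in X$, while the hypothesis that $f$ is $0$-connected makes $F$ essentially surjective on path components. So $F$ ought to be an equivalence of topological categories, and the statement amounts to saying that $\mathbb{B}$ carries such an equivalence to a weak equivalence --- the hypotheses of fibrancy and of weak units being exactly the technical input that makes this true. This is \cite[Theorem~5.2]{ebertbible}; I sketch the shape of the argument.

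First I would reduce to the case where $f$ is a Serre fibration. Replace $X$ by the mapping path space $X'\colonequals X\times_{\Ob(\C)}\Ob(\C)^{[0,1]}$: evaluation at $1$ gives a Serre fibration $f'\colon X'\to\Ob(\C)$ that is still $0$-connected, and $X'\to X$ is a homotopy equivalence. Since $\C$ is fibrant, $(s,t)\colon\Mor(\C)\to\Ob(\C)^{2}$ is a fibration, and then the description of $N_{p}\C$ as an iterated fibre product of copies of $\Mor(\C)$ over $\Ob(\C)$ shows inductively that $N_{p}\C\to\Ob(\C)^{p+1}$ is a Serre fibration for every $p$. Hence each $N_{p}((f')^{*}\C)\to N_{p}(f^{*}\C)$ is a weak equivalence, and Proposition~\ref{levelwisereal} gives $\mathbb{B}((f')^{*}\C)\simeq\mathbb{B}(f^{*}\C)$; so we may assume $f$ is a $0$-connected Serre fibration.

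Next I would resolve the space of objects by the fibrewise \v{C}ech nerve of $f$: let $Z_{\bullet,\bullet}$ be the bi-semi-simplicial space whose $(p,q)$-simplices consist of a $p$-simplex $(\gamma_{1},\dots,\gamma_{p})$ of $N\C$ together with, for each of its $p{+}1$ objects $c$, a $(q{+}1)$-tuple of points of $X$ lying over $c$. Then $Z_{\bullet,0}=N_{\bullet}(f^{*}\C)$, and for each fixed $p$ the homotopy fibre of $||Z_{p,\bullet}||\to N_{p}\C$ over a simplex with objects $c_{0},\dots,c_{p}$ is the thick classifying space of the indiscrete groupoid on the nonempty space $\prod_{i}f^{-1}(c_{i})$, which is weakly contractible --- this is the step where $0$-connectedness of $f$ is used. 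Hence $||Z_{p,\bullet}||\to N_{p}\C$ is a weak equivalence for each $p$, so $||\mathrm{diag}\,Z||\xrightarrow{\;\sim\;}\mathbb{B}\C$ by Proposition~\ref{levelwisereal}; it then remains to compare $||\mathrm{diag}\,Z||$ with $\mathbb{B}(f^{*}\C)=||Z_{\bullet,0}||$ compatibly with $\mathbb{B}F$.

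This last comparison is the hard part, and it is where the weak units must enter in an essential way: the crude levelwise estimate of Proposition~\ref{levelwisereal} only gives that $\mathbb{B}F$ is surjective on $\pi_{0}$. The plan is to run a Quillen Theorem~A argument for the topological functor $F$: the comma category $c/F$ has as objects the morphisms $c\to f(x)$ of $\C$, fibrancy of $\C$ guarantees it has the expected homotopy type, and a weak right unit $u\colon c'\to c$ provides the ``identity up to homotopy'' needed to contract $\mathbb{B}(c/F)$; showing these classifying spaces are all weakly contractible then forces $\mathbb{B}F$ to be a weak equivalence. (Alternatively, one tries to present $\mathbb{B}F$ as a Serre microfibration with weakly contractible fibres and applies Lemma~\ref{weisslemma}.) I expect this step --- turning $\pi_{0}$-surjectivity into a weak equivalence by exploiting the weak units together with fibrancy --- to be the real obstacle, the earlier reductions being routine bookkeeping with the tools already recalled. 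Since all of this is carried out in \cite[Theorem~5.2]{ebertbible}, we will simply invoke it.
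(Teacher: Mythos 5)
The paper itself gives no argument for this proposition --- it is imported directly from Ebert--Randal-Williams \cite[Theorem~5.2]{ebertbible} --- and your proposal likewise ends by invoking that same citation, so the two agree on the only point that matters. Your preliminary sketch (fibrant replacement of $f$, the fibrewise \v{C}ech resolution, and the observation that the weak units and fibrancy must carry the final comparison) is a reasonable reconstruction of the cited argument and introduces nothing that conflicts with the way the proposition is used in the paper.
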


We consider topological categories with extra structure:
\begin{defi}\cite[Chapter 4]{may}
An $E_n$-algebra in a symmetric monoidal category $\C$ is an object $X$ of $\C$ equipped with maps $E_n(k)\otimes X^{\otimes k}\rightarrow X$.
The model we use for $E_n$ is that of the little $n$-cubes operad.
\end{defi}

\section{Description of the Higman--Thompson groups}\label{trees}

Let us now introduce Higman--Thompson groups, defined by Higman \cite{higman} as generalizations of the original Thompson groups \cite{thompson}.
They can be seen as automorphisms of Cantor algebras.
A Cantor algebra of type $n$ is a set $X$ with a bijection $X^n\cong X$.
For $n\geq 2$ and $r\geq 1$, the Higman--Thompson group $V_{n,r}$ is the automorphism group of the free Cantor algebra $C_n[r]$ of type $n$ on $r$ generators.

\paragraph{\textbf{Trees and forests.}} In this section, we give an alternative definition of these groups which relies on trees and forests.
A \emph{tree} is a finite connected directed acyclic graph such that each vertex is the target of at most one edge. 
We call \emph{internal vertices} the vertices which are not univalent.
We consider trees with one special vertex which is the source of one edge and not a target for any edge. 
This vertex is called the \emph{root} and the trees are then \emph{rooted trees}.
The other univalent vertices are called \emph{leaves}.
We will use the terminology root and leaf sometimes for the univalent vertices, sometimes for the edges connected to them.

A tree is \emph{$n$-ary} if each internal vertex is the source of $n$ edges.
Let a \emph{forest} be a disjoint union of trees.
An $r$-forest is a forest with $r$ connected components. An $(n,r)$-forest is a forest with $r$ roots and only $n$-ary trees.

\paragraph{\textbf{Paired forest diagrams.}} The following vocabulary and framework is introduced and used by Skipper and Wu \cite{skipperwu}.
A \emph{paired $(n,r)$-forest diagram} is a triple $(F_{-},\sigma, F_{+})$ consisting of two $(n,r)$-forests both with $l$ labeled leaves for some integer $l$, and $\sigma$ a permutation of $l$ elements. 

Let $F$ be an $(n,r)$-forest.
We call \emph{caret} an internal vertex of $F$ along with its $n$ descendant edges and vertices. 
A caret is \emph{elementary} if its leaves are leaves of the forest.

Let $(F_{-},\sigma, F_{+})$ be a paired $(n,r)$-forest diagram. Suppose there is an elementary caret in $F_{-}$
with leaves labeled by $i,\dots, i+n-1$, and an elementary caret in $F_{+}$ with leaves labeled by $\sigma(i),\dots,\sigma(i+n-1)$.
Then a \emph{reduction} of the diagram is obtained by removing both carets, renumbering the leaves, and replacing $\sigma$ with the permutation $\sigma'$ which sends the new leaf of $F_{-}$ to the new leaf of $F_{+}$ and is $\sigma$ elsewhere. 
The inverse operation of a reduction is called an \emph{expansion}.

    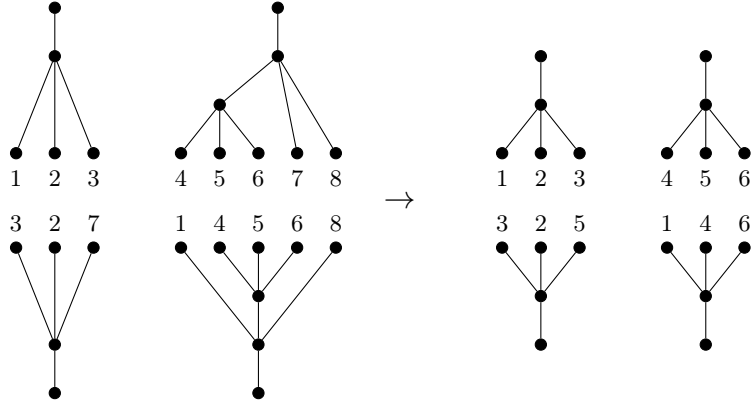
\begin{figure}
    \begin{minipage}{0.35\linewidth}
    \begin{center}
        \begin{forest}
            for tree = {s sep =  10pt, l-=5mm, inner sep = 1.5pt, if n children=0{tier=terminal}{}}
            [, phantom, s sep = 1cm,
            [, circle, fill = black, draw,
            [, circle, fill = black, draw,
              [, circle, label = {[font=\scriptsize]south:1}, fill = black, draw][, circle, label = {[font=\scriptsize]south:2}, fill = black, draw
              ][, circle, label = {[font=\scriptsize]south:3}, fill = black, draw]
              ]]
              [, circle, fill = black, draw,
            [, circle, fill = black, draw, 
              [, circle, fill = black, draw,
              [, circle, label = {[font=\scriptsize]south:4}, fill = black, draw][, circle, label = {[font=\scriptsize]south:5}, fill = black, draw][, circle, label = {[font=\scriptsize]south:6}, fill = black, draw]][, circle, label = {[font=\scriptsize]south:7}, fill = black, draw][, circle, label = {[font=\scriptsize]south:8},fill = black, draw]
             ]]
            ]
        \end{forest}
    \end{center}
    \begin{center}
        \begin{forest}
            for tree = {s sep =  10pt, l-=5mm, inner sep = 1.5pt, if n children=0{tier=terminal}{}}
            [, phantom, s sep = 1cm, grow=north,
            [, circle, fill = black, draw, grow=north,
            [, circle, fill = black, draw, grow=north,[, circle, label = {[font=\scriptsize]north:8},fill = black, draw][, circle, fill = black, draw, grow=north,
              [, circle, label = {[font=\scriptsize]north:6},fill = black, draw][, circle, label = {[font=\scriptsize]north:5},fill = black, draw][, circle, label = {[font=\scriptsize]north:4},fill = black, draw]
              ][, circle, label = {[font=\scriptsize]north:1},fill = black, draw]
              ]]
              [, circle, fill = black, draw, grow=north,
            [, circle, fill = black, draw, grow=north,
              [, circle, label = {[font=\scriptsize]north:7},fill = black, draw][, circle,label = {[font=\scriptsize]north:2}, fill = black, draw][, circle, label = {[font=\scriptsize]north:3},fill = black, draw]
              ]]
            ]
        \end{forest}
    \end{center}
    \end{minipage}
    $\rightarrow$
    \begin{minipage}{0.35\linewidth}
        \begin{center}
            \begin{forest}
                for tree = {s sep =  10pt, l-=5mm, inner sep = 1.5pt, if n children=0{tier=terminal}{}}
                [, phantom, s sep = 1cm,
                [, circle, fill = black, draw, 
                [, circle, fill = black, draw, 
                  [, circle, label = {[font=\scriptsize]south:1}, fill = black, draw][, circle, label = {[font=\scriptsize]south:2}, fill = black, draw
                  ][, circle, label = {[font=\scriptsize]south:3}, fill = black, draw]
                  ]]
                [, circle, fill = black, draw, 
                [, circle, fill = black, draw, 
                  [, circle, label = {[font=\scriptsize]south:4}, fill = black, draw][, circle, label = {[font=\scriptsize]south:5}, fill = black, draw][, circle, label = {[font=\scriptsize]south:6}, fill = black, draw]
                 ]]
                ]
            \end{forest}
        \end{center}
        \begin{center}
            \begin{forest}
                for tree = {s sep =  10pt, l-=5mm, inner sep = 1.5pt, if n children=0{tier=terminal}{}}
                [, phantom, s sep = 1cm, grow=north,
                [, circle, fill = black, draw, grow=north,
                [, circle, fill = black, draw, grow=north,[, circle, label = {[font=\scriptsize]north:6}, fill = black, draw][, circle, label = {[font=\scriptsize]north:4}, fill = black, draw, grow=north][, circle, label = {[font=\scriptsize]north:1}, fill = black, draw]
                  ]]
                [, circle, fill = black, draw, grow=north,
                [, circle, fill = black, draw, grow=north,
                  [, circle, label = {[font=\scriptsize]north:5}, fill = black, draw][, circle, label = {[font=\scriptsize]north:2}, fill = black, draw][, circle, label = {[font=\scriptsize]north:3}, fill = black, draw]
                  ]]
                ]
            \end{forest}
        \end{center}
    \end{minipage}
    \label{reduction}
    \caption{Example of a reduction of paired forest diagrams}
    \end{figure}

The multiplication of $(F,\sigma, F')$ and $(T,\tau, T')$ is given by taking representatives $(\tilde{F},\tilde{\sigma},\tilde{F}')$ and $(\tilde{T},\tilde{\tau},\tilde{T}')$ in the equivalence classes of these elements
so that $\tilde{F}$, $\tilde{F}'$, $\tilde{T}$ and $\tilde{T}'$ have the same number of leaves. Then $(F,\sigma, F')\cdot (T,\tau, T') = (\tilde{F},\tilde{\sigma}\circ\tilde{\tau}, \tilde{T}')$.
\begin{exemple}
    Let us take $r=1$ and $n=2$ in this example.
    \begin{center}
    \begin{minipage}{0.15\linewidth}
        \begin{center}
            \begin{forest}
                for tree = {s sep =  10pt, l-=5mm,inner sep = 1.5pt, if n children=0{tier=terminal}{}}
                [, circle, fill = black, draw,
                [, circle, fill = black, draw,
                  [, circle, fill = black, draw,
                  [, circle, label = {[font=\scriptsize]south:1}, fill = black, draw][, circle, label = {[font=\scriptsize]south:2}, fill = black, draw]
                  ][, circle, label = {[font=\scriptsize]south:3}, fill = black, draw]
                ]]
            \end{forest}
        \end{center}
        \begin{center}
            \begin{forest}
                for tree = {s sep =  10pt, l-=5mm,inner sep = 1.5pt, if n children=0{tier=terminal}{}}
                [, circle, fill = black, draw, grow=north,
                [, circle, fill = black, draw, grow=north,
                  [, circle, fill = black, draw,grow=north,
                  [, circle, label = {[font=\scriptsize]north:3}, fill = black, draw,grow=north][, circle, label = {[font=\scriptsize]north:2}, fill = black, draw,grow=north]
                  ][, circle, label = {[font=\scriptsize]north:1}, fill = black, draw,grow=north]
                ]]
            \end{forest}
        \end{center}
        \end{minipage}
        $\bullet$
        \begin{minipage}{0.15\linewidth}
            \begin{center}
                \begin{forest}
                    for tree = {s sep =  10pt, l-=5mm,inner sep = 1.5pt, if n children=0{tier=terminal}{}}
                    [, circle, fill = black, draw,
                    [, circle, fill = black, draw,
                      [, circle, fill = black, draw, label = {[font=\scriptsize]south:1}][, circle, label = {[font=\scriptsize]south:2}, fill = black, draw]
                    ]]
                \end{forest}
            \end{center}
            \begin{center}
                \begin{forest}
                    for tree = {s sep =  10pt, l-=5mm,inner sep = 1.5pt, if n children=0{tier=terminal}{}}
                    [, circle, fill = black, draw, grow= north,
                    [, circle, fill = black, draw, grow= north,
                      [, circle, fill = black, draw, label = {[font=\scriptsize]north:1}][, circle, label = {[font=\scriptsize]north:2}, fill = black, draw]
                    ]]
                \end{forest}
            \end{center}
        \end{minipage}
        $=$
        \begin{minipage}{0.15\linewidth}
            \begin{center}
                \begin{forest}
                    for tree = {s sep =  10pt, l-=5mm,inner sep = 1.5pt, if n children=0{tier=terminal}{}}
                    [, circle, fill = black, draw,
                    [, circle, fill = black, draw,
                      [, circle, fill = black, draw,
                      [, circle, label = {[font=\scriptsize]south:1}, fill = black, draw][, circle, label = {[font=\scriptsize]south:2}, fill = black, draw]
                      ][, circle, label = {[font=\scriptsize]south:3}, fill = black, draw]
                    ]]
                \end{forest}
            \end{center}
            \begin{center}
                \begin{forest}
                    for tree = {s sep =  10pt, l-=5mm,inner sep = 1.5pt, if n children=0{tier=terminal}{}}
                    [, circle, fill = black, draw, grow=north,
                    [, circle, fill = black, draw, grow=north,
                      [, circle, fill = black, draw,grow=north,
                      [, circle, label = {[font=\scriptsize]north:3}, fill = black, draw,grow=north][, circle, label = {[font=\scriptsize]north:2}, fill = black, draw,grow=north]
                      ][, circle, label = {[font=\scriptsize]north:1}, fill = black, draw,grow=north]
                    ]]
                \end{forest}
            \end{center}
            \end{minipage}
            $\bullet$
            \begin{minipage}{0.15\linewidth}
                \begin{center}
                    \begin{forest}
                        for tree = {s sep =  10pt, l-=5mm,inner sep = 1.5pt, if n children=0{tier=terminal}{}}
                        [, circle, fill = black, draw,
                        [, circle, fill = black, draw,
                          [, circle, label = {[font=\scriptsize]south:1}, fill = black, draw]
                          [, circle, fill = black, draw,
                          [, circle, label = {[font=\scriptsize]south:2}, fill = black, draw][, circle, label = {[font=\scriptsize]south:3}, fill = black, draw]
                          ]
                        ]]
                    \end{forest}
                \end{center}
                \begin{center}
                    \begin{forest}
                        for tree = {s sep =  10pt, l-=5mm,inner sep = 1.5pt, if n children=0{tier=terminal}{}}
                        [, circle, fill = black, draw, grow=north,
                        [, circle, fill = black, draw, grow=north,
                          [, circle, label = {[font=\scriptsize]north:1}, fill = black, draw,grow=north]
                          [, circle, fill = black, draw,grow=north,
                          [, circle, label = {[font=\scriptsize]north:3}, fill = black, draw,grow=north][, circle, label = {[font=\scriptsize]north:2}, fill = black, draw,grow=north]
                          ]
                        ]]
                    \end{forest}
                \end{center}
                \end{minipage}
                $=$
                \begin{minipage}{0.15\linewidth}
                    \begin{center}
                        \begin{forest}
                            for tree = {s sep =  10pt, l-=5mm,inner sep = 1.5pt, if n children=0{tier=terminal}{}}
                            [, circle, fill = black, draw,
                            [, circle, fill = black, draw,
                              [, circle, fill = black, draw,
                              [, circle, label = {[font=\scriptsize]south:1}, fill = black, draw][, circle, label = {[font=\scriptsize]south:2}, fill = black, draw]
                              ][, circle, label = {[font=\scriptsize]south:3}, fill = black, draw]
                            ]]
                        \end{forest}
                    \end{center}
                    \begin{center}
                        \begin{forest}
                            for tree = {s sep =  10pt, l-=5mm,inner sep = 1.5pt, if n children=0{tier=terminal}{}}
                            [, circle, fill = black, draw, grow=north,
                            [, circle, fill = black, draw, grow=north,
                              [, circle, label = {[font=\scriptsize]north:1}, fill = black, draw]
                              [, circle, fill = black, draw,grow=north,
                              [, circle, label = {[font=\scriptsize]north:3}, fill = black, draw][, circle, label = {[font=\scriptsize]north:2}, fill = black, draw]
                              ]
                            ]]
                        \end{forest}
                    \end{center}
                    \end{minipage}
                \end{center}
\end{exemple}

\begin{prop}\cite[Definition~3.1]{skipperwu}
    The Higman--Thompson group $V_{n,r}$ is isomorphic to the group of equivalence classes of paired $(n,r)$-forest diagrams, where two forest diagrams are equivalent if one can be obtained from the other through a finite series of reductions and expansions. 
    \end{prop}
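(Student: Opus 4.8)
\emph{Proof strategy.} The plan is to deduce the statement from Higman's structure theory of the free Cantor algebra \cite{higman}, the real task being to set up a dictionary between $(n,r)$-forests and \emph{bases} of $C_n[r]$ under which reductions and expansions of forest diagrams correspond exactly to Higman's equivalence of symbols.

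First I would recall the algebraic set-up. In $C_n[r]$, with distinguished generators $x_1,\dots,x_r$, one has $n$ unary ``descent'' operations $\lambda_1,\dots,\lambda_n$ and one $n$-ary ``ascent'' operation $\alpha$, with $\alpha(\lambda_1 y,\dots,\lambda_n y)=y$ and $\lambda_i\alpha(y_1,\dots,y_n)=y_i$. Call a finite subset $B\subseteq C_n[r]$ a \emph{basis} if it is obtained from $\{x_1,\dots,x_r\}$ by a finite sequence of \emph{simple expansions}, a simple expansion replacing some element $y$ of the current set by the $n$ elements $\lambda_1 y,\dots,\lambda_n y$. The combinatorial dictionary is then immediate by induction on the number of internal vertices: in an $(n,r)$-forest $F$ with roots labelled $1,\dots,r$, each leaf $\ell$ is joined to a unique root $x_{i(\ell)}$ by a directed path reading off a word $j_1\cdots j_m$ in $\{1,\dots,n\}$, and sending $\ell$ to $\lambda_{j_m}\cdots\lambda_{j_1}x_{i(\ell)}$ identifies the leaf set of $F$ with a basis $B_F$ of $C_n[r]$; this is a bijection between $(n,r)$-forests and bases that carries ``attach an elementary caret'' to ``simple expansion''. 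I would then invoke the three facts of Higman (see \cite{higman}, or the account in \cite{skipperwu}): (i) every automorphism of $C_n[r]$ carries each basis bijectively onto a basis; (ii) any bijection between two bases extends to a unique automorphism of $C_n[r]$; (iii) any two bases of $C_n[r]$ admit a common expansion.

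Using (ii) and the dictionary, I would define $\Phi$ from paired $(n,r)$-forest diagrams to $V_{n,r}=\Aut(C_n[r])$ by sending $(F_-,\sigma,F_+)$ to the unique automorphism carrying $B_{F_+}$ onto $B_{F_-}$ along the leaf bijection prescribed by $\sigma$ (one fixes this source/target convention once and for all so that it is compatible with the multiplication formula in the statement; the opposite convention is identical). For \emph{surjectivity}, given $\varphi\in\Aut(C_n[r])$ apply (iii) to $X_0=\{x_1,\dots,x_r\}$ and to $\varphi^{-1}(X_0)$ to get a common expansion $B$; by (i), $\varphi(B)$ is an expansion of $\varphi(\varphi^{-1}(X_0))=X_0$, so the forests of $B$ and $\varphi(B)$ and the bijection $\varphi|_B$ assemble into a diagram with $\Phi$-image $\varphi$. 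For \emph{factoring through $\sim$} it suffices to treat one expansion of a diagram: it replaces the bijection $B_{F_+}\to B_{F_-}$ by the induced bijection between their simple expansions at corresponding leaves, and by uniqueness in (ii) this still extends to $\Phi(F_-,\sigma,F_+)$, which already matches the larger bijection because it is a Cantor algebra map; hence $\Phi$ descends to $\overline\Phi$. For \emph{injectivity}, if two diagrams have image $\varphi$, use (iii) on the bases of their two ``$F_-$'' forests to expand both to diagrams with equal left forest $F$; then both have left basis $B_F$, hence right basis $\varphi(B_F)$ and equal right forest, and both permutations encode $\varphi|_{B_F}$, so the two diagrams share a common expansion and are equivalent. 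Finally $\overline\Phi$ is a \emph{homomorphism}: one unwinds the multiplication of diagrams (pass to representatives whose relevant forests agree, compose the permutations) against functional composition of the corresponding automorphisms on the common basis, with the trivial forest mapping to $\id$ and $(F_-,\sigma,F_+)^{-1}=(F_+,\sigma^{-1},F_-)$.

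The genuine content, facts (i)--(iii), is Higman's, so the rest is bookkeeping; the step needing most care will be the homomorphism property. There one must reconcile the left/right roles of the two forests in a diagram with source and target of the automorphism, and reduce a product of two diagrams to a single representative by a coherent sequence of expansions before comparing with the composite automorphism; keeping the conventions consistent across $\Phi$, the multiplication formula, and fact (ii) is the main place the argument can go wrong. A secondary subtlety is that the common expansion used for injectivity is the one for pairs of bases refining the \emph{fixed} basis $X_0$, which is extracted from (iii) (or proved directly by a confluence argument on carets).
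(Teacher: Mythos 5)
The paper does not actually prove this proposition: it is imported verbatim as \cite[Definition~3.1]{skipperwu}, with the underlying content due to Higman. So there is no in-paper argument to compare against; what you have written is the standard classical proof, and it is the one carried out (in various levels of detail) in \cite{higman} and \cite{skipperwu}. Your outline is correct and identifies the right skeleton: the forest/basis dictionary, Higman's facts (i)--(iii), and then the four routine verifications (well-definedness on equivalence classes, surjectivity, injectivity, multiplicativity).

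Two points deserve slightly more care than your sketch gives them. First, fact (ii) is really the statement that every basis in your sense is a \emph{free} generating set of $C_n[r]$; this is where freeness of the Cantor algebra enters and it is the one place where something beyond combinatorics of carets is used, so it should be flagged as the load-bearing input rather than folded into ``Higman's facts.'' Second, for injectivity you need confluence of caret expansions (any two expansions of a common forest admit a common further expansion); this is a genuine diamond-lemma argument on forests, not a formal consequence of (iii) as stated for abstract bases, though it is elementary. With those two points made explicit, the argument is complete and matches the cited source.
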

    We can define other groups in the same family by considering paired forest diagrams where admissible permutations are in a subgroup of $\Sigma_n$ \cite{higman}. 
    In the family of groups $F_{n,r}$, the only admissible permutation is the identity; in the family $T_{n,r}$ the only permutations allowed are cycles.    

    We now need to mention a sort of periodicity property of the Higman--Thompson groups:
    \begin{prop}\cite[proof of Theorem~3.6]{szymikwahl}
        The groups $V_{n,r}$ and $V_{n,r+(n-1)}$ are isomorphic.
    \end{prop}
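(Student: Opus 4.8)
I would prove the isomorphism $V_{n,r}\cong V_{n,r+(n-1)}$ at the level of Cantor algebras, since there the statement is an instance of ``a free object does not depend on the chosen free generating set''. Recall that a Cantor algebra of type $n$ carries an $n$-ary operation $\lambda$ together with unary component operations $\alpha_1,\dots,\alpha_n$, which assemble into mutually inverse bijections $X^n\leftrightarrows X$; in particular $\alpha_i\circ\lambda=\mathrm{pr}_i$ (the $i$-th projection $X^n\to X$) and $\lambda\circ(\alpha_1,\dots,\alpha_n)=\id_X$. Let $x_1,\dots,x_r$ be the free generators of $C_n[r]$. The plan is: (1) show that a different set of $r+(n-1)$ elements also freely generates $C_n[r]$; (2) deduce $C_n[r]\cong C_n[r+(n-1)]$; (3) apply $\Aut(-)$.

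\textbf{Step 1.} Put $y_i\colonequals\alpha_i(x_1)$ for $1\le i\le n$, and let $S\colonequals\{y_1,\dots,y_n,x_2,\dots,x_r\}$, a set of $r+(n-1)$ elements. I claim $S$ is a free generating set of $C_n[r]$. It generates, because $x_1=\lambda(y_1,\dots,y_n)$ lies in the subalgebra generated by $S$, hence so do all of $x_1,\dots,x_r$, hence $S$ generates everything. For the universal property: given any Cantor algebra $A$ of type $n$ and elements $a_1,\dots,a_n,b_2,\dots,b_r\in A$, freeness of $C_n[r]$ on $x_1,\dots,x_r$ produces a unique homomorphism $\phi\colon C_n[r]\to A$ with $\phi(x_1)=\lambda^A(a_1,\dots,a_n)$ and $\phi(x_j)=b_j$ for $j\ge2$; then $\phi(y_i)=\alpha_i^A(\phi(x_1))=\alpha_i^A\lambda^A(a_1,\dots,a_n)=a_i$, so $\phi$ restricts to the prescribed map on $S$. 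Uniqueness of such a $\phi$ is immediate, since a homomorphism out of $C_n[r]$ is determined by its values on any generating set. This shows $S$ is free.

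\textbf{Steps 2--3 and the obstacle.} By Step 1, $C_n[r]$ is a free Cantor algebra of type $n$ on $r+(n-1)$ generators; since free algebras in a variety on generating sets of the same cardinality are canonically isomorphic, $C_n[r]\cong C_n[r+(n-1)]$, and applying $\Aut(-)$ gives $V_{n,r}\cong V_{n,r+(n-1)}$. I do not expect a genuine obstacle here: the only points requiring care are writing the Cantor algebra axioms precisely enough that the identity $\alpha_i\circ\lambda=\mathrm{pr}_i$ is available, and being clear that ``free'' is meant with respect to the variety of type-$n$ Cantor algebras. As an alternative more in the spirit of the forest-diagram model, one can argue directly: after an expansion, represent an element of $V_{n,r}$ by a paired $(n,r)$-forest diagram whose first $F_-$-tree and first $F_+$-tree each carry a caret at the root, then delete those two root carets so that the first tree of each forest becomes $n$ trees; this yields a paired $(n,r+(n-1))$-forest diagram with the same permutation, the assignment is compatible with reductions and expansions, is multiplicative, and is inverted by regrouping the first $n$ roots under a common caret. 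I would nonetheless lead with the Cantor-algebra argument, as checking well-definedness there is essentially automatic.
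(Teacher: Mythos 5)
Your proof is correct and is essentially the standard argument: the paper itself gives no proof of this proposition, deferring to Szymik--Wahl, and their argument (going back to Higman) is exactly yours — expanding the free generator $x_1$ into its $n$ components $\alpha_i(x_1)$ exhibits $C_n[r]$ as free on $r+(n-1)$ generators, whence the automorphism groups agree.
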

\subsection{Operad groups}
Thumann \cite{thumann} provides a framework where Higman--Thompson groups arise as the \textit{operad groups} of certain operads. 
We here recall this framework. 
We will rely on it to build a topological model for the Higman--Thompson groups, using Proposition \ref{concentratedone}.
A symmetric operad in a symmetric monoidal category $(\C,\otimes, u)$ \cite{may, boardmanvogt} is the data of a collection of objects of the category $\{\mathcal{O}(n)\}_{n\in\mathbb{N}}$ for every integer $n$, together with:
\begin{itemize}
    \item a morphism $\eta:u \rightarrow \mathcal{O}(1)$;
    \item an action of the symmetric group $\Sigma_n$ on each $\mathcal{O}(n)$;
    \item morphisms $\mathcal{O}(r)\otimes\mathcal{O}(k_1)\otimes\dots\otimes \mathcal{O}(k_r)\rightarrow \mathcal{O}(k_1+\dots k_r)$ for every integer $r$ and every $(k_1,\dots, k_r) \in\mathbb{N}^r$. 
\end{itemize}
We require these morphisms to be suitably associative, unital, and equivariant. 

There is a functor 
\[\End: \SymMon \rightarrow \Op\]
which associates to each symmetric monoidal category $\C$ an operad $\End(\C)$, where 
\[\End(\C)(n) = \Hom(\C^{\otimes n},\C).\]
An algebra over the operad $\op$ is given by a morphism of operads $\op\rightarrow \End(\C)$.
The functor $\End$ has a left adjoint:
\[\s:\Op\rightarrow \SymMon.\] 
The category $\s(\op)$ associated to an operad $\op$ is called its PROP.
Let us give another, more explicit, definition of this category.
The PROP $\s(\op)$ associated to an operad $\op$ has object space $\mathbb{N}$. Morphisms from $m$ to $n$ are sequences of $n$ operations in $\op$, with total number of inputs $m$.

\begin{defi}
Let $\op$ be a symmetric operad, and $r$ be an object in $\s(\op)$. The group $\pi_1(\op,r) = \pi_1(\vert\s(\op)\vert, r)$ is the \emph{operad group} associated to $\op$ and based at the object $r$.
\end{defi}

\subsection{Cancellative calculus of fractions}
We will now recall briefly the cancellative calculus of fractions, first in the case of categories and then in the case of operads.

\begin{defi}
    A category $\C$ satisfies the calculus of fractions if the following properties are satisfied:
    \begin{itemize}
        \item \textit{Square filling.} For every pair of morphisms $f:B\rightarrow A$ and $g:C\rightarrow A$ there are morphisms $a:D\rightarrow B$ and $b:D\rightarrow C$ such that $fa=gb$.
        \begin{center}
        \begin{tikzcd}
        D\arrow[r,dotted,"\exists a"] \arrow[d,dotted, "\exists b"] &B\arrow{d}{f}\\
        C\arrow{r}{g} &A.
        \end{tikzcd}
        \end{center}
        \item \textit{Equalization.} For all morphisms $f,g:A\rightarrow B$ and $a:B\rightarrow C$ such that $af=ag$, then there exists an arrow $b:D\rightarrow A$ with $fb=gb$.
        \begin{center}
        \begin{tikzcd}
        D\arrow[r,dotted,"\exists b"] &A\arrow[r,swap,"f"]\arrow[r,shift left,"g"] &B\arrow[r,"a"]&C.
        \end{tikzcd}
        \end{center}
    \end{itemize}
\end{defi}

\begin{defi}
    A category $\C$ is \emph{right cancellative} if for morphisms $f,g,a$, $af=ag$ implies $f=g$. It is \emph{left cancellative} if for morphisms $f,g,a$, $fa=ga$ implies $f=g$.
    It is called \emph{cancellative} if it is both right and left cancellative.
\end{defi}

We can define an analog notion for operads.
\begin{notat}
Let $\theta,\psi$ be two operations in an operad. Let us denote by $\theta\approx\psi$ the fact that $\theta$ and $\psi$ are equivalent modulo the action of the symmetric groups; i.e. there exists a permutation $\gamma$ such that $\theta = \gamma\cdot\psi.$
\end{notat}

\begin{defi}
Let $\op$ be a symmetric operad. We say that $\op$ satisfies the calculus of fractions if the following conditions are satisfied:
\begin{itemize}
\item \textit{Square filling.} For every pair of operations $\theta_1$ and $\theta_2$, there are sequences of operations $\Psi_1 = (\psi_1^1,\dots,\psi_1^{k_1})$ and $\Psi_2 = (\psi_2^1,\dots,\psi_2^{k_2})$ such that $\theta_i*\Psi_i$ is defined for $i=1,2$ and such that $\theta_1 * \Psi_1\approx \theta_2 * \Psi_2$.
\item \textit{Equalization.} Let $\theta$ be an operation and $\Psi_1 = (\psi_1^1,\dots,\psi_1^{k})$ and $\Psi_2 = (\psi_2^1,\dots,\psi_2^{k})$ be sequences of operations such that $\theta*\Psi_1 \approx \theta*\Psi_2$, which means that there is a $\gamma$ such that $\theta*\Psi_1 = \gamma\cdot(\theta*\Psi_2).$ Then $\gamma$ is of the form $\gamma=\gamma_1\otimes\dots\otimes\gamma_k$ where $\gamma_j\cdot\psi_2^j$ is defined for every $1\leq j \leq k$ and there is a sequence of operations $\Xi_j$ for every $1\leq j \leq k$ such that $\psi^j_1*\Xi_j = (\gamma_j\cdot\psi_2^j)*\Xi_j.$
\end{itemize}
\end{defi}

\begin{defi}
Let $\op$ be a symmetric operad. We say that $\op$ is \emph{right cancellative} if the following is verified:
let $\theta$ be an operation and $\Psi_1 = (\psi_1^1,\dots,\psi_1^{k})$, $\Psi_2 = (\psi_2^1,\dots,\psi_2^{k})$ be sequences of operations such that $\theta*\Psi_1 \approx \theta*\Psi_2$, which means that there is a $\gamma$ such that $\theta*\Psi_1 = \gamma\cdot(\theta*\Psi_2).$ Then $\gamma$ is of the form $\gamma=\gamma_1\otimes\dots\otimes\gamma_k$ where $\gamma_j\cdot\psi_2^j$ is defined for every $1\leq j \leq k$ and $\gamma_j\cdot\psi_2^j = \psi_1^j$ for every $1\leq j \leq k.$
\end{defi}

\begin{defi}
Let $\op$ be a symmetric operad. We say that $\op$ is \emph{left cancellative} if the following is verified:
let $\theta_1, \theta_2$ be operations and $\Psi$ a sequence of operations such that $\theta_1*\Psi = \theta_2*\Psi$. Then $\theta_1 = \theta_2$. 
\end{defi}
A symmetric operad $\op$ is \emph{cancellative} if it is both right and left cancellative.

\begin{prop}\label{cancellativeprop}\cite[Proposition~3.9-3.10]{thumann}
An operad $\op$ satisfies the calculus of fractions if and only if its associated PROP $\s(\op)$ does. 
An operad $\op$ is cancellative if and only if its associated PROP $\s(\op)$ is. 
\end{prop}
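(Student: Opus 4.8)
The plan is to prove both equivalences by unwinding the categorical conditions against the explicit description of $\s(\op)$ recalled above. Its objects are the natural numbers; a morphism $m\to n$ is a sequence $(\theta_1,\dots,\theta_n)$ of operations of $\op$ whose arities $k_1,\dots,k_n$ sum to $m$, decorated with the datum of how the $m$ inputs are distributed among the $n$ slots, all taken modulo the symmetric group actions on the individual $\op(k_j)$; and the composite $\theta\circ\phi$ of $\phi=(\phi_1,\dots,\phi_m)\colon\ell\to m$ with $\theta=(\theta_1,\dots,\theta_n)\colon m\to n$ is the ``block'' composite $(\theta_1*\Phi_1,\dots,\theta_n*\Phi_n)$, where $\Phi_j$ is the length-$k_j$ subsequence of the $\phi_i$ plugged into the inputs of $\theta_j$. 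The structural point driving everything is that a single operation $\theta\in\op(k)$ is exactly a morphism $k\to 1$ in $\s(\op)$, that composition in $\s(\op)$ is computed one target slot at a time by operadic composition, and that equality of two parallel morphisms $m\to n$ in $\s(\op)$ is precisely the conjunction over $j$ of the relation $\approx$ between their $j$th components. Thus each of the four conditions for $\s(\op)$ (square filling, equalization, right cancellativity, left cancellativity) will turn out to be the ``componentwise over the $n$ slots'' version of the corresponding condition for $\op$.

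First I would dispatch the ``only if'' directions, which are immediate. Given operations $\theta_1,\theta_2$ with a common output, view them as a cospan $\bullet\to 1\leftarrow\bullet$ in $\s(\op)$, apply categorical square filling, and read the resulting completions back as length-$1$ sequences of operations: this is exactly operadic square filling. In the same way, the categorical equalization condition and the left/right cancellativity conditions, restricted to morphisms with target $1$, unwind verbatim to the operadic ones, the permutation witnessing equality (resp. $\approx$) of morphisms into $1$ in $\s(\op)$ being the permutation $\gamma$ appearing in the operad statements.

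For the ``if'' directions I would argue slot by slot and reassemble. For square filling, given $f=(\theta^f_1,\dots,\theta^f_n)\colon B\to A$ and $g=(\theta^g_1,\dots,\theta^g_n)\colon C\to A$ with $A=n$, apply operadic square filling to each pair $\theta^f_j,\theta^g_j$ to get sequences $\Psi^f_j,\Psi^g_j$ with $\theta^f_j*\Psi^f_j\approx\theta^g_j*\Psi^g_j$; concatenating the $\Psi^f_j$ over $j$, and likewise the $\Psi^g_j$, produces morphisms $a,b$ of $\s(\op)$ with a common source, and by the block description of composition the $j$th components of $fa$ and $gb$ are $\theta^f_j*\Psi^f_j$ and $\theta^g_j*\Psi^g_j$, hence agree up to $\approx$ for every $j$, i.e. $fa=gb$ in $\s(\op)$. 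Equalization and right cancellativity follow the same pattern, after one preliminary observation: the permutation witnessing an equality $\alpha f=\alpha g$ in $\s(\op)$ is automatically of block-diagonal form $\gamma_1\otimes\dots\otimes\gamma_n$, since composition in $\s(\op)$ never permutes inputs across distinct target slots --- and this is exactly the shape of $\gamma$ demanded in the operadic hypotheses, so those conclusions can be invoked in each slot and glued. Left cancellativity is the slotwise version of ``$fh=gh$ forces $f=g$''; the point to check is that the two chunkings of $h$ induced by $f$ and by $g$ necessarily coincide once $fh=gh$, after which operadic left cancellativity applies in each slot.

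I expect the main obstacle to be the careful bookkeeping of the symmetric-group actions: one has to check that the $\approx$ relation between operations, the quotient defining the morphism sets of $\s(\op)$, the distribution-of-inputs datum, and the block structure of the permutations arising from composition in $\s(\op)$ are all mutually compatible, so that ``componentwise $\approx$'' really does mean ``equal in $\s(\op)$'' and so that the tensor decomposition of $\gamma$ required by the operadic equalization and cancellativity conditions is exactly what the categorical conditions supply. Once this dictionary is pinned down, both proofs are routine.
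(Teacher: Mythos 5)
The paper offers no proof of this proposition --- it is imported directly from Thumann (Propositions 3.9--3.10), whose argument is exactly the definition-unwinding you outline: a morphism of $\s(\op)$ is a permutation followed by a tensor product of operations, composition is computed one output slot at a time, and each categorical axiom is the slotwise version of the corresponding operadic one. Your sketch is correct in outline and matches that approach; the one caveat is the point you already flag, namely that componentwise $\approx$ only yields $fa = (gb)\circ\sigma$ for a block permutation $\sigma$ rather than $fa = gb$ on the nose, which is harmless because $\sigma$ is an isomorphism of $\s(\op)$ that can be absorbed into one leg of the square (and, dually, the permutation part of an arbitrary morphism of $\s(\op)$ must be factored off before the slotwise analysis applies).
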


\begin{prop}\label{concentratedone}\cite[Proposition~2.13]{thumann}
    If a category $\C$ satisfies the cancellative calculus of fractions, then the functor $\C\rightarrow \Pi_1(\C)$ to its fundamental groupoid is a homotopy equivalence.
\end{prop}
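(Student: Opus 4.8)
The plan is to identify the functor $\C\to\Pi_1(\C)$ with the localization functor $F\colon\C\to\C[\C^{-1}]$ that formally inverts every morphism — recall that for any small category the fundamental groupoid of $B\C$ is canonically the Gabriel--Zisman localization of $\C$ at the class of all its morphisms — and then to apply Quillen's Theorem A. The two conditions defining the calculus of fractions are precisely the Ore-type axioms guaranteeing that $\C$ admits a calculus of right fractions with respect to all morphisms; hence every morphism of $\C[\C^{-1}]$ is of the form $F(f)\circ F(s)^{-1}$ for a span $X\xleftarrow{\,s\,}P\xrightarrow{\,f\,}Y$ in $\C$, and for $\alpha,\beta\colon X\to Y$ in $\C$ one has $F(\alpha)=F(\beta)$ in $\C[\C^{-1}]$ if and only if $\alpha a=\beta a$ for some morphism $a\colon R\to X$ of $\C$. (The cancellativity hypothesis makes the equivalence relation on fractions even more explicit; the homotopy statement below already follows from the calculus of fractions alone.)

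By Theorem A it is enough to show that for every object $Y$ of $\C[\C^{-1}]$, that is, for every object $Y$ of $\C$, the comma category $F\downarrow Y$ — with objects the pairs $(X,u)$, where $X$ is an object of $\C$ and $u\colon F(X)\to Y$ is a morphism of $\C[\C^{-1}]$, and with morphisms $(X,u)\to(X',u')$ the maps $h\colon X\to X'$ of $\C$ satisfying $u'\circ F(h)=u$ — has contractible classifying space. First I would check that $F\downarrow Y$ is cofiltered. It is nonempty, since $(Y,\id_Y)$ is an object. Given two objects $(X,u)$ and $(X',u')$, I write $u=F(f)\circ F(s)^{-1}$ and $u'=F(f')\circ F(s')^{-1}$ for spans $X\xleftarrow{\,s\,}P\xrightarrow{\,f\,}Y$ and $X'\xleftarrow{\,s'\,}P'\xrightarrow{\,f'\,}Y$; then $s$ and $s'$ define morphisms $(P,F(f))\to(X,u)$ and $(P',F(f'))\to(X',u')$ in $F\downarrow Y$, and square filling applied to $f$ and $f'$ (which share the target $Y$) yields an object $Q$ and maps $q\colon Q\to P$, $q'\colon Q\to P'$ with $fq=f'q'$, so that $(Q,F(fq))$ together with $sq$ and $s'q'$ is a cone on $(X,u)$ and $(X',u')$. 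Given two parallel morphisms $\alpha,\beta\colon(X,u)\to(X',u')$, the identities $u'F(\alpha)=u=u'F(\beta)$ together with the invertibility of $u'$ give $F(\alpha)=F(\beta)$, hence $\alpha a=\beta a$ for some $a\colon R\to X$ in $\C$, and this $a$ defines a morphism $(R,uF(a))\to(X,u)$ of $F\downarrow Y$ equalizing $\alpha$ and $\beta$. Thus $F\downarrow Y$ is cofiltered, so $(F\downarrow Y)^{op}$ is filtered and $B(F\downarrow Y)$ is contractible.

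Quillen's Theorem A then yields that $B\C\to B\C[\C^{-1}]\cong B\Pi_1(\C)$ is a homotopy equivalence, which is the claim. The part requiring care is the first step: matching the stated axioms with the classical calculus of right fractions, and writing down the hom-sets of $\C[\C^{-1}]$ and the equivalence relation on fractions precisely; once that dictionary is fixed, the verification that each comma category $F\downarrow Y$ is cofiltered is a routine diagram chase. An alternative would be to invoke the general principle that a calculus of fractions lets the Gabriel--Zisman localization model the simplicial localization, from which the statement also follows, but the comma-category route is more self-contained.
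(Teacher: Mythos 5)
Your argument is correct. Note that the paper does not prove this statement at all --- it is imported verbatim from Thumann (Proposition~2.13 there), so there is no in-paper proof to compare against; your write-up is essentially the standard argument behind that citation. The three ingredients all check out: the identification of $\Pi_1(\C)$ with the localization $\C[\C^{-1}]$ at all morphisms is Quillen's description of the fundamental groupoid of a classifying space; the two axioms in the paper's definition are exactly the Ore conditions for a calculus of right fractions with respect to the class of all morphisms, which gives both the span description of morphisms in $\C[\C^{-1}]$ and the criterion $F(\alpha)=F(\beta)\iff \alpha a=\beta a$ for some $a$; and your verification that each comma category $F\downarrow Y$ is nonempty, admits cones over pairs of objects (via square filling applied to the cospan $f,f'$ over $Y$), and equalizes parallel arrows (via the equalization axiom) shows it is cofiltered, hence contractible, so Theorem~A applies. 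Your side remark is also accurate: cancellativity plays no role in this implication --- it is needed elsewhere in Thumann's framework (e.g.\ for identifying the automorphism groups in the localization with reduced fraction representatives), not for the homotopy equivalence $B\C\simeq B\Pi_1(\C)$.
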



\subsection{Cube cutting operads}
We now consider only operads that are \emph{reduced}, i.e. that only have one operation of arity 1.
Thumann \cite{thumann} exhibits the Higman--Thompson groups as operad groups of certain operads, called the cube cutting operads.
We are going to look at suboperads of the endomorphism operad of the symmetric monoidal category $(\Top, \sqcup)$ in the sense of the following definition.
\begin{defi}
    Let $\C$ be a category and $S$ a set of operations of degree at least 2 in $\End(\C)$.
    Then the suboperad of $\End(\C)$ generated by this data is the smallest suboperad $\op$ of $\End(\C)$ which contains $S$.
\end{defi}

Let $N$ be a finite set of natural numbers greater than or equal to 2. 
The elements $n_1,\dots, n_k$ of $N$ are said to be \emph{independent} if $n_1^{r_1}\dots n_k^{r_k} = n_1^{s_1}\dots n_k^{s_k}$ implies $s_1 = r_1, \dots, r_k = s_k$.
Note that if $n_1,\dots, n_k$ are pairwise coprime, then they are independent.
However, the converse is not true: $2$ and $6$ are not coprime but they are independent.

For $k\geq 1$, consider the $k$-dimensional unit cube.
For each $j\in{1,\dots, k}$, let $N_j$ be a set of independent numbers.
For every $j$, and $n\in N_j$, a \emph{very elementary subdivision} of the cube is given by cutting it, along the $j$th coordinate, into $n$ pieces. The \emph{very elementary operation} associated to such a subdivision is the rescaling of $n$ cubes to fit into the pieces of the subdivision.
The set $S$ of all the very elementary operations, obtained for $1\leq j \leq k$ and $n\in N_j$, generates a suboperad $\op_{k,N_1,\dots, N_k}$ of $\End(\Top,\sqcup)$, called the \emph{cube cutting operad}.

\begin{exemple}
    The following figure illustrates the operadic composition in the case where $k=2$, $N_1 = \{2\}$ and $N_2 = \{3\}$:
    \begin{center}
        \begin{minipage}{0.25\linewidth}
            \begin{center}
            \begin{tikzpicture}
            \draw (0,0)--(0,2);
            \draw (0,0)--(2,0);
            \draw (2,2)--(2,0);
            \draw (2,2)--(0,2);
            \draw (1,0) -- (1,2);
            \node at (0.5,0.8) [label = 1] {};
            \node at (1.5,0.8) [label = 2] {};
            \end{tikzpicture}
            \end{center}
            \end{minipage}
            $\circ_2$
            \begin{minipage}{0.25\linewidth}
            \begin{center}
            \begin{tikzpicture}
                \draw (0,0)--(0,2);
                \draw (0,0)--(2,0);
                \draw (2,2)--(2,0);
                \draw (2,2)--(0,2);
                \draw (0,0.67) -- (2,0.67);
                \draw (0,1.33) -- (2,1.33);
                \node at (1,0) [label = 1] {};
            \node at (1,1.2) [label = 2] {};
            \node at (1,0.6) [label = 3] {};
            \end{tikzpicture}
            \end{center}
            \end{minipage}
            $=$
            \begin{minipage}{0.25\linewidth}
            \begin{center}
            \begin{tikzpicture}
                \draw (0,0)--(0,2);
                \draw (0,0)--(2,0);
                \draw (2,2)--(2,0);
                \draw (2,2)--(0,2);
                \draw (1,0) -- (1,2);
                \node at (0.5,0.8) [label = 1] {};
                \draw (1,0.67) -- (2,0.67);
                \draw (1,1.33) -- (2,1.33);
                \node at (1.5,0) [label = 2] {};
            \node at (1.5,1.2) [label = 3] {};
            \node at (1.5,0.6) [label = 4] {};
            \end{tikzpicture}
            \end{center}
            \end{minipage}
        \end{center}
\end{exemple}

\begin{prop}\label{cancellativeoperad}
    For $k\in\mathbb{N}$ and $N_j\subset\mathbb{N}$ sets of independent integers for every $1\leq j\leq k$, the associated cube cutting operad satisfies the cancellative calculus of fractions.
\end{prop}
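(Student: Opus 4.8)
The plan is to verify the four properties making up the cancellative calculus of fractions — left cancellativity, right cancellativity, square filling, and equalization — using the concrete description of the operations of $\op_{k,N_1,\dots,N_k}$ as subdivisions of the cube. Write $\op$ for this operad. Following Thumann \cite{thumann}, an operation of $\op$ of arity $m$ is an \emph{admissible dissection} $D$ of $[0,1]^k$ into $m$ axis-parallel sub-boxes — one obtained from the trivial dissection by iterating very elementary operations — together with a bijection $\lambda$ from the set of boxes of $D$ to $\{1,\dots,m\}$; the symmetric group acts by postcomposing $\lambda$, so $\theta_1\approx\theta_2$ exactly when $\theta_1$ and $\theta_2$ have the same underlying unlabelled dissection, and $\theta*\Psi$, for $\Psi=(\psi^1,\dots,\psi^n)$, is the operation obtained by refining the box of $D_\theta$ labelled $j$ by the affinely rescaled copy of $D_{\psi^j}$, carrying along the evident labelling of the refined dissection.

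First I would record the shape of the boxes: by induction on the construction of $D$, every box of an admissible dissection has the form $\prod_{j=1}^{k}[c_j/w_j,(c_j+1)/w_j]$ with each $w_j$ in the multiplicative monoid $M_j$ generated by $N_j$. The key lemma is then that any finite family $D_1,\dots,D_s$ of admissible dissections has a common refinement by a uniform grid $G(d_1,\dots,d_k)$ (the subdivision of $[0,1]^k$ into $\prod_j d_j$ congruent boxes) with $d_j\in M_j$, and moreover the refinement of any box of any $D_i$ to this grid is again a composite of very elementary operations. To prove it, take $d_j$ to be the product, over all boxes occurring in $D_1,\dots,D_s$ counted with multiplicity, of their $j$-th denominator $w_j$: then $d_j\in M_j$, each such $w_j$ divides $d_j$, and — this is the point — the quotient $d_j/w_j$ is the product of the remaining denominators, hence again lies in $M_j$, so cutting a box of $j$-th width $1/w_j$ into $d_j/w_j$ congruent slices along direction $j$ is realizable by very elementary operations. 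Square filling for $\theta_1,\theta_2$ is then immediate: choosing $G$ a common grid refinement of $D_{\theta_1}$ and $D_{\theta_2}$ and refining each to $G$ yields $\Psi_1,\Psi_2$ with $\theta_1*\Psi_1$ and $\theta_2*\Psi_2$ both having underlying dissection $G$, so $\theta_1*\Psi_1\approx\theta_2*\Psi_2$.

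The other three properties rest on one observation: if $\theta*\Psi_1$ and $\theta*\Psi_2$ (same $\theta$, of arity $n$) have the same underlying dissection, then restricting that dissection to the box $B_j$ of $D_\theta$ labelled $j$ shows that the rescaled copies of $D_{\psi_1^j}$ and of $D_{\psi_2^j}$ inside $B_j$ agree, hence $D_{\psi_1^j}=D_{\psi_2^j}$ and $\psi_1^j\approx\psi_2^j$, say $\psi_1^j=\gamma_j\cdot\psi_2^j$; and reading off how the labels of $\theta*\Psi_i$ are built in blocks from those of $\theta$ and of the $\psi_i^j$ identifies the global relabelling permutation as $\gamma=\gamma_1\otimes\dots\otimes\gamma_n$. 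This is precisely right cancellativity, and it also yields equalization, since already $\psi_1^j=\gamma_j\cdot\psi_2^j$, so one may take $\Xi_j$ to be the appropriate string of arity-$1$ operations (which are identities, as $\op$ is reduced). For left cancellativity, if $\theta_1*\Psi=\theta_2*\Psi$ as labelled dissections then, for each $j$, the boxes of this common dissection whose labels form the $j$-th block (the same block whether computed from $\theta_1$ or from $\theta_2$, as it depends only on the arities of the $\psi^j$) have union equal to the box of $D_{\theta_1}$ labelled $j$ and also equal to the box of $D_{\theta_2}$ labelled $j$; so these boxes coincide for every $j$, whence $D_{\theta_1}=D_{\theta_2}$ with the same labelling, i.e.\ $\theta_1=\theta_2$.

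I expect the main obstacle to be the grid refinement lemma, and specifically the check that the intermediate refinements remain composites of very elementary operations rather than arbitrary subdivisions: this is exactly where one needs $M_j$ to be closed under the divisions $d_j\mapsto d_j/w_j$, which is why one takes the product of the occurring denominators rather than their least common multiple. Independence of the sets $N_j$ is not actually needed for this argument, though it is of course available under the hypotheses. With the lemma in hand, the remaining work is the routine but slightly fiddly bookkeeping of labels under operadic composition used in the last two paragraphs.
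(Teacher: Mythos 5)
The paper offers no proof of this proposition at all --- it is quoted from Thumann's work on operad groups, where the cube cutting operads are analyzed in a more combinatorial setting --- so there is nothing to compare your argument against line by line; judged on its own terms, your proof is correct and self-contained for the operad as defined here, namely as the suboperad of the endomorphism operad of $I^k$ in $(\Top,\sqcup)$, so that an operation really is a labelled admissible dissection and two operations are equal exactly when the dissections and labellings agree. The two load-bearing points both check out: the common refinement by the uniform grid $G(d_1,\dots,d_k)$ with $d_j$ the \emph{product} (not the lcm) of all occurring $j$-th denominators, which guarantees $d_j/w_j\in M_j$ and hence that every intermediate refinement is a composite of very elementary operations; and the observation that, for a fixed $\theta$, the labels of the composite $\theta*\Psi$ fall into blocks indexed by the boxes of $D_\theta$, which forces the relabelling permutation $\gamma$ to be block-diagonal and yields right cancellativity (hence equalization with trivial $\Xi_j$) and left cancellativity. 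You are also right that independence of the $N_j$ plays no role in this argument once operations are identified with actual dissections --- non-independence only collapses distinct formal composites onto the same operation, which if anything makes cancellativity easier; independence is needed elsewhere (e.g.\ for Thumann's identification of the operad groups and his finiteness arguments), and in this paper only the singleton case $N=\{n\}$ is ever used. The one step worth making explicit if you wrote this up is the identification of the operations of arity $m$ with \emph{all} labelled admissible dissections into $m$ boxes: this uses that a suboperad of a symmetric operad is by definition closed under the symmetric group actions and contains the unit, so that every relabelling of a composite of very elementary operations is again in the suboperad.
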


\begin{prop}
    The Higman--Thompson groups $V_{n,r}$ are isomorphic to the operad groups associated to the cube cutting operads $\op_{1,n}$ (where $k=1$ and $N = \{n\}$), based at the object $r$.
\end{prop}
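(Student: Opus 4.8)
The plan is to combine the identification of Higman--Thompson groups with equivalence classes of paired forest diagrams (Skipper--Wu) with Thumann's dictionary between operad groups and PROPs, and the computations of the cube cutting operad just established. The claim to prove is that $V_{n,r}\cong \pi_1(\op_{1,n},r)$, the operad group of the cube cutting operad $\op_{1,n}$ with $k=1$, $N=\{n\}$, based at $r$.

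First I would unwind the definitions on the operad side. The operad $\op_{1,n}$ has a single generating operation $\mu$ of arity $n$ (the rescaling of $n$ intervals into the $n$ equal pieces of the unit interval), and the operations of arity $m$ are built by iterated composition of copies of $\mu$: concretely an arity-$m$ operation records a way of subdividing the unit interval into $m$ sub-intervals by repeated $n$-fold cuts, i.e. an $n$-ary tree with $m$ leaves, together with a bijection of the leaf set with $\{1,\dots,m\}$ recording which input goes to which sub-interval. By the explicit description of the PROP recalled in the excerpt, a morphism $r\to 1$ in $\s(\op_{1,n})$ is a single such operation with $r$ inputs, so it is exactly an $n$-ary tree with $r$ leaves plus a labelling; more generally a morphism $m\to r$ in $\s(\op_{1,n})$ is an $(n,r)$-forest with $m$ leaves plus a labelling. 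This is already the combinatorial data underlying paired forest diagrams.

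Next I would use Proposition \ref{concentratedone}: since $\op_{1,n}$ satisfies the cancellative calculus of fractions (Proposition \ref{cancellativeoperad}), so does its PROP $\s(\op_{1,n})$ by Proposition \ref{cancellativeprop}, hence the functor $\s(\op_{1,n})\to \Pi_1(\s(\op_{1,n}))$ to the fundamental groupoid is a homotopy equivalence; in particular $\pi_1(\op_{1,n},r)=\pi_1(|\s(\op_{1,n})|,r)$ is the automorphism group of the object $r$ in the groupoid of fractions. An element of this group is represented by a zig-zag $r \xleftarrow{F_-} m \xrightarrow{F_+} r$ of morphisms in $\s(\op_{1,n})$, i.e. a formal fraction $F_+\circ F_-^{-1}$ of two $(n,r)$-forests with $m$ leaves; the calculus of fractions tells us when two such fractions represent the same morphism. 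The map sending such a fraction $F_+ F_-^{-1}$ to the paired forest diagram $(F_-,\sigma,F_+)$ — where $\sigma$ is extracted from the two leaf-labellings — is the candidate isomorphism to $V_{n,r}$.

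The main obstacle, and the bulk of the work, is checking that this map is well-defined and bijective: one must match the equivalence relation on fractions (generated by the square-filling and equalization moves of the calculus of fractions, together with post-composition by a common morphism) with the equivalence relation on paired forest diagrams (generated by expansions and reductions), and verify compatibility with composition. Here expansion of a paired diagram corresponds to pre-composing both $F_-$ and $F_+$ with the same forest, which is precisely the relation $F_+F_-^{-1}=(F_+ G)(F_- G)^{-1}$ in the groupoid of fractions; the cancellativity of $\op_{1,n}$ is exactly what guarantees that no further collapsing occurs, so that reduced paired diagrams biject with reduced fractions. I would make this precise by showing that every fraction can be brought to a common-refinement normal form $F_+ F_-^{-1}$ with $F_\pm$ having a common leaf set (using square filling to put two fractions over a common denominator, exactly as in the multiplication rule for paired forest diagrams recalled above), that two such normal forms are equivalent iff they differ by a simultaneous expansion/reduction, and that composition of fractions matches the Skipper--Wu multiplication $(F,\sigma,F')\cdot(T,\tau,T')$. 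Finally, invoking the Skipper--Wu description of $V_{n,r}$ as equivalence classes of paired $(n,r)$-forest diagrams completes the identification. (Alternatively one can cite Thumann directly, who proves $V_{n,r}$ is the operad group of $\op_{1,n}$; the proof sketch above is the content of that identification made explicit in the forest-diagram language.)
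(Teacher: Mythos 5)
The paper does not actually prove this proposition: it is stated as a citation of Thumann's identification of the Higman--Thompson groups with the operad groups of the cube cutting operads, and the surrounding text only uses the statement as input (e.g.\ in Proposition \ref{propgroups}). Your sketch is a correct reconstruction of the content of that citation, and it is the argument one would write out: arity-$m$ operations of $\op_{1,n}$ are labelled $n$-ary trees with $m$ leaves, morphisms $m\to r$ in $\s(\op_{1,n})$ are labelled $(n,r)$-forests, the cancellative calculus of fractions (Propositions \ref{cancellativeoperad}, \ref{cancellativeprop}, \ref{concentratedone}) identifies $\pi_1(|\s(\op_{1,n})|,r)$ with the automorphism group of $r$ in the groupoid of fractions, and fractions $F_+F_-^{-1}$ modulo common pre-composition are exactly Skipper--Wu paired forest diagrams modulo expansion/reduction, with composition matching their multiplication. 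The one step you assert rather than justify is that distinct labelled trees give distinct operations in the suboperad of $\End(\Top,\sqcup)$ (so that $\op_{1,n}(m)$ really is the set of labelled trees and not a quotient of it); this holds because distinct $n$-ary trees produce distinct sets of $n$-adic cut points of the interval, and is worth a sentence, but it is not a gap in the approach. Your fallback of citing Thumann directly is precisely what the paper does.
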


The following proposition relates the classifying space of the groups to that of the PROP associated to the cube cutting operad.
It is in some sense the reason for introducing operads in the first place, as it enables us to now focus our study on the category $\s(\op_{1,n})$.
\begin{prop}\label{propgroups}
    There is an equivalence 
    \[\displaystyle \bigsqcup_{0\leq r\leq n-1} BV_{n,r}\simeq B\s(\op_{1,n}).\]
\end{prop}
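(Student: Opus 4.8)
The plan is to extract the group‑theoretic content of $B\s(\op_{1,n})$ by replacing the PROP with its fundamental groupoid and then counting components. First I would combine the structural inputs already at hand: by Proposition~\ref{cancellativeoperad} the operad $\op_{1,n}$ satisfies the cancellative calculus of fractions, so by Proposition~\ref{cancellativeprop} the category $\s(\op_{1,n})$ does as well, and Proposition~\ref{concentratedone} then yields a homotopy equivalence $\s(\op_{1,n})\to\Pi_1(\s(\op_{1,n}))$, hence $B\s(\op_{1,n})\simeq B\Pi_1(\s(\op_{1,n}))$. Since the classifying space of a groupoid is the disjoint union, over isomorphism classes of objects, of the classifying spaces of the corresponding automorphism groups, this gives $B\s(\op_{1,n})\simeq\bigsqcup_{[r]}B\Aut_{\Pi_1(\s(\op_{1,n}))}(r)$, where $[r]$ runs over the components of $B\s(\op_{1,n})$. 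For each object $r$ one has $\Aut_{\Pi_1(\s(\op_{1,n}))}(r)=\pi_1(\vert\s(\op_{1,n})\vert,r)=\pi_1(\op_{1,n},r)$, which is $V_{n,r}$ by the identification of the Higman--Thompson groups with operad groups recalled above (reading $V_{n,0}$ as the trivial group). So the task reduces to identifying the set of components with $\{0,1,\dots,n-1\}$.

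For that I would argue directly with the morphisms of the PROP. Its object space is $\mathbb{N}$, and a morphism $m\to l$ is a list of $l$ operations of $\op_{1,n}$ whose arities sum to $m$. Every very elementary operation has arity $n$ and operadic composition adds multiples of $n-1$, so the arities occurring in $\op_{1,n}$ are exactly the integers of the form $1+k(n-1)$ with $k\ge 0$; consequently a morphism $m\to l$ exists if and only if $m=l=0$, or else $l\ge 1$, $m\ge l$ and $m\equiv l\pmod{n-1}$. It follows that two objects $r$ and $r'$ are joined by a zigzag of morphisms precisely when $r\equiv r'\pmod{n-1}$, with the object $0$ isolated, so $\{0,1,\dots,n-1\}$ is a complete and irredundant set of representatives for the components. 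Plugging this into the previous paragraph gives the asserted equivalence $B\s(\op_{1,n})\simeq\bigsqcup_{0\le r\le n-1}BV_{n,r}$.

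All the substantive homotopy theory is imported, so the main obstacle is the elementary but careful bookkeeping in the second step: verifying that the arity set of $\op_{1,n}$ is exactly $1+(n-1)\mathbb{N}$, that this congruence is the only obstruction to connecting two objects by a zigzag, and that the degenerate object $0$ (which contributes a single point, matching the trivial ``$V_{n,0}$'') is treated correctly. A softer route to ``at most $n$ homotopy types'' is the periodicity $V_{n,r}\cong V_{n,r+(n-1)}$ recalled earlier, which already shows that $V_{n,1},\dots,V_{n,n-1}$ together with the trivial group exhaust the relevant isomorphism types; but the arity computation is still needed to know that each type occurs exactly once and that no further components are hiding. One should also note that $\op_{1,n}$ is a discrete operad, so $B\s(\op_{1,n})$ is just the ordinary classifying space of a discrete category and none of the good/well-pointedness hypotheses of Section~\ref{exposition} are required here.
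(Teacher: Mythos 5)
Your argument is exactly the paper's: cancellativity of $\op_{1,n}$ (Proposition~\ref{cancellativeoperad}) passes to the PROP via Proposition~\ref{cancellativeprop}, Proposition~\ref{concentratedone} replaces $\s(\op_{1,n})$ by its fundamental groupoid, and the classifying space decomposes into automorphism groups $V_{n,r}$ indexed by the components. The only difference is that you spell out the arity bookkeeping identifying the components with $\{0,\dots,n-1\}$, which the paper leaves implicit; this is a correct and welcome elaboration, not a different route.
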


\begin{proof}
    The operad $\op_{1,n}$ has cancellative calculus of fractions (proposition \ref{cancellativeoperad}), so its PROP does as well by proposition \ref{cancellativeprop}. 
    We can therefore apply proposition \ref{concentratedone} to compare the prop to its fundamental groupoid.
    \begin{align*}
        B\s(\op_{1,n}) &\simeq B\Pi_1(\s(\op_{1,n})) \simeq \displaystyle \bigsqcup_{r<n} B\Aut_{\s(\op_{1,n})}(r) \simeq \displaystyle \bigsqcup_{r<n} BV_{n,r}.\qedhere
    \end{align*}
\end{proof}

\section{Embedding spaces of trees}\label{embeddings}
We will consider in the next section a topological version of the PROPs associated to the cube cutting operads.
To that end, we need to consider certain spaces of embeddings of forests inside $I^N$, where $N$ is large and potentially infinite.
The main result of this section is that our space of embeddings of a forest $T$ inside $I^\infty$ is contractible.
We now think of a forest as a finite CW-complex of dimension 1. 
The 0-cells are the vertices, including roots and leaves, which are the univalent vertices. 

\begin{rema}
    We fix an angle for the tree embeddings which will remain the same throughout. 
    The value of this angle $\alpha$ does not really matter as long as $\frac{\pi}{3n}\leq\alpha\leq \frac{\pi}{2n}$.
    We fix an affine embedding $\phi_\alpha(\epsilon)$ of a $n$-ary corolla (see figure \ref{standardimage}) with fixed angles $\alpha$ embedded in an $(N+1)$-dimensional cube $C_\epsilon$ of size $2\epsilon$ for every $\epsilon >0$. 
\end{rema}

\begin{defi}\label{embeddingspaces}
Let $N \in\mathbb{N}\cup\{\infty\}$ and $T$ be a rooted $n$-ary forest. We consider embeddings of $T$ into $I^{N}\times\mathbb{R}$, i.e. continuous maps which are homeomorphisms onto their image.
For such an embedding $\phi$, let $\tilde{\phi}$ be the projection on the last coordinate. We will sometimes refer to this coordinate as the time dimension.
Let $\E_N(T)$, for $N$ in $\mathbb{N}\cup\{\infty\}$, be the space of pairs $(\phi,\epsilon)$ where $\phi$ is an embedding of $T$ into $I^{N}\times\mathbb{R}$ and $\epsilon>0$ such that:
\begin{itemize}
    \item[(i)] $\tilde{\phi}$ is linear and increasing on each 1-cell;
    \item[(ii)] $\tilde{\phi}$ is constant on the set of leaves, and also on the set of roots;
    \item[(iii)] for every internal vertex $v$ of $T$, there exists $V$ a neighborhood of $v$ such that $\phi\vert_V$ coincides with $\phi_\alpha(\epsilon)$ and such that $\im(\phi)\cap C_\epsilon$ is only the embedded corolla; the cubes $C_\epsilon$ are called \emph{vertex cubes}.
\end{itemize}
\end{defi}

A point $t$ in $\mathbb{R}$ is called a junction if it is the image of an internal vertex by $\tilde{\phi}$.

\begin{figure}
    \centering
    \def\svgwidth{\hsize}
    \begingroup%
  \makeatletter%
  \providecommand\color[2][]{%
    \errmessage{(Inkscape) Color is used for the text in Inkscape, but the package 'color.sty' is not loaded}%
    \renewcommand\color[2][]{}%
  }%
  \providecommand\transparent[1]{%
    \errmessage{(Inkscape) Transparency is used (non-zero) for the text in Inkscape, but the package 'transparent.sty' is not loaded}%
    \renewcommand\transparent[1]{}%
  }%
  \providecommand\rotatebox[2]{#2}%
  \newcommand*\fsize{\dimexpr\f@size pt\relax}%
  \newcommand*\lineheight[1]{\fontsize{\fsize}{#1\fsize}\selectfont}%
  \ifx\svgwidth\undefined%
    \setlength{\unitlength}{395.28769666bp}%
    \ifx\svgscale\undefined%
      \relax%
    \else%
      \setlength{\unitlength}{\unitlength * \real{\svgscale}}%
    \fi%
  \else%
    \setlength{\unitlength}{\svgwidth}%
  \fi%
  \global\let\svgwidth\undefined%
  \global\let\svgscale\undefined%
  \makeatother%
  \begin{picture}(1,0.49525469)%
    \lineheight{1}%
    \setlength\tabcolsep{0pt}%
    \put(0,0){\includegraphics[width=\unitlength,page=1]{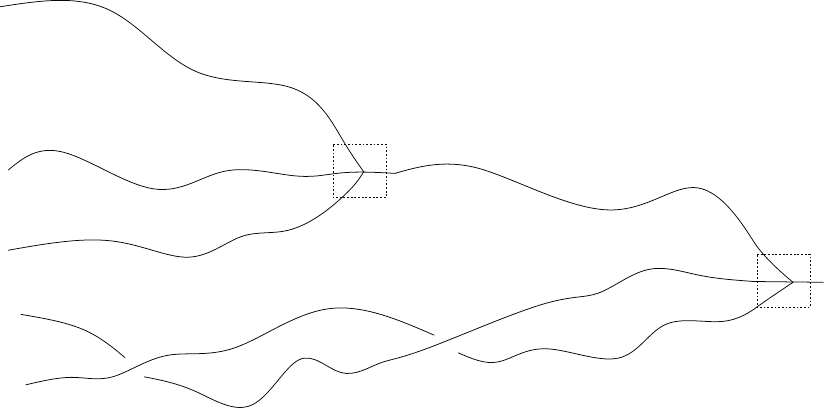}}%
    \put(0.40665496,0.34698933){\color[rgb]{0,0,0}\makebox(0,0)[lt]{\lineheight{1.25}\smash{\begin{tabular}[t]{l}$\epsilon$\end{tabular}}}}%
    \put(0,0){\includegraphics[width=\unitlength,page=2]{plongement.pdf}}%
  \end{picture}%
\endgroup%

    \caption{Example of an element in $\E_N(T)$}
    \label{standardimage}
\end{figure}


\subsection{Holey configurations}



A useful result that we need is the connectivity of configuration spaces with some holes in them:
\begin{lem}\label{connectedness}
    Let $k,N,m$ be integers. Let $B_0,\dots, B_m$ be a collection of disjoint open cubes whose disjoint union is strictly included inside $I^N$. 
    The space $\Conf(k,I^N\backslash \sqcup B_i)$ is $(N-2)$-connected.
\end{lem}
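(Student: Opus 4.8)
The plan is to prove this by induction on the number $m$ of holes, using a comparison with ordinary (holeless) configuration spaces, which are known to be highly connected. The base case $m=0$ (or, if one prefers, $m$ empty, i.e.\ the cube with nothing removed) is the classical fact that $\Conf(k,I^N)$ is $(N-2)$-connected; this follows for instance from the fibration sequence $\Conf(k-1, I^N\setminus\{k-1 \text{ points}\})\to\Conf(k,I^N)\to I^N$ together with the fact that removing finitely many points from $I^N$ leaves an $(N-2)$-connected space, and an induction on $k$. So the content is in the inductive step: passing from $m-1$ holes to $m$ holes.

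For the inductive step I would fix the last hole $B_m$ and compare $\Conf(k, I^N\setminus\bigsqcup_{i=0}^m B_i)$ with $\Conf(k, I^N\setminus\bigsqcup_{i=0}^{m-1} B_i)$, i.e.\ the space where $B_m$ is filled back in. The inclusion of the former into the latter can be analyzed by a standard "scanning in $B_m$" / gluing argument: an isotopy of $I^N$ supported near $B_m$ shows that the larger space deformation-retracts onto the union of the smaller space with the configurations that are allowed to enter a slightly shrunk copy of $B_m$. More efficient is to set up a fibration-type or Mayer--Vietoris-type comparison. Concretely, let $U = I^N\setminus\bigsqcup_{i=0}^{m-1}B_i$ and note $U\setminus B_m$ is the space we want. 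Since $B_m$ is an open cube disjoint from the others and contained in the interior, $U$ is obtained from $U\setminus B_m$ by filling in a cube, and one can use the cofiber/excision comparison for configuration spaces: $\Conf(k,U)$ is built from $\Conf(k, U\setminus \bar B_m)$ by attaching configurations using at least one point in $B_m$. The key homotopical input is that "having one point in the cube $B_m$" is $(N-1)$-connected information (the cube is contractible and codimension considerations give that the subspace of configurations avoiding $B_m$ is $(N-2)$-connected inside the one where a point may lie in $B_m$), while the remaining points still form an $(N-2)$-connected configuration space by the inductive hypothesis applied to the other holes plus $B_m$ now treated as solid. Iterating/combining these connectivity estimates via the relevant long exact sequences (or via the Blakers--Massey / gluing lemma for the pushout of configuration spaces) yields that the inclusion $\Conf(k,U\setminus B_m)\hookrightarrow \Conf(k,U)$ is $(N-2)$-connected, and since the target is $(N-2)$-connected by induction, so is the source.

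An alternative, perhaps cleaner, route — and the one I would actually try first — is to induct on $k$ instead, using the Fadell--Neuwirth-type fibration
\[
\Conf(k-1,\; I^N\setminus(\sqcup B_i \cup \{k-1 \text{ points}\}))\;\longrightarrow\;\Conf(k,\; I^N\setminus\sqcup B_i)\;\longrightarrow\; I^N\setminus\sqcup B_i,
\]
where the projection remembers the first point. The base $I^N\setminus\sqcup B_i$ is a cube with $m+1$ disjoint open subcubes removed; it is connected (as $N\geq 1$) and in fact $\pi_j$ of it vanishes for $j\leq N-2$ is false in general (it can have $\pi_{N-1}$), but it is $(N-2)$-connected only when... — here one must be careful: $I^N$ minus open cubes is $(N-2)$-connected is exactly the $k=1$ case of the lemma, so this is circular unless handled as the genuine base case. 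So the honest structure is: first establish the $k=1$ statement ($I^N$ minus finitely many disjoint open subcubes is $(N-2)$-connected) directly — e.g.\ by a deformation retraction onto a wedge of $(N-1)$-spheres, one for each removed cube, which manifestly kills $\pi_j$ for $j\leq N-2$ — and then run the Fadell--Neuwirth induction on $k$, at each stage using that the fiber is the $(k-1)$-point configuration space of $I^N$ with $m+1+(k-1)$ disjoint small holes removed, hence $(N-2)$-connected by the inductive hypothesis, and that the base is $(N-2)$-connected by the $k=1$ case; the long exact sequence of the fibration then gives $(N-2)$-connectivity of the total space.

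The main obstacle I anticipate is verifying that the relevant "forget a point" map really is a fibration (or at least a quasifibration / microfibration with the right fibers) in the presence of the removed open cubes, and pinning down the fiber precisely: removing a point of a configuration that lies near the boundary of $I^N$ or near one of the $B_i$ requires a uniform local-triviality argument, which is standard but slightly fiddly since $I^N$ is a manifold with corners and the $B_i$ are open. A clean way around this is to replace $I^N\setminus\sqcup B_i$ by a homotopy equivalent open manifold (e.g.\ thicken and round off), where the classical Fadell--Neuwirth fibration theorem applies verbatim, and then transport the connectivity conclusion back along the equivalence. With that normalization in place, the two inductions (the wedge-of-spheres computation for $k=1$, and the fibration long exact sequence for the inductive step in $k$) are routine.
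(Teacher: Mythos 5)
Your second route --- normalize to an open model, establish the $k=1$ case by retracting onto a wedge of $(N-1)$-spheres, then induct on $k$ via the Fadell--Neuwirth fibration with the inductive hypothesis quantified over all numbers of holes --- is exactly the paper's proof, which first deformation retracts $\Conf(k,I^N\setminus\sqcup B_i)$ onto $\Conf(k,\mathbb{R}^N\setminus\{p_0,\dots,p_m\})$ and then runs the same long exact sequence argument. The only slip is that the fiber of the forget-the-first-point map has \emph{one} extra point removed, not $k-1$; this is harmless because your induction hypothesis covers all $m$.
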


\begin{proof}
    
    
   
    The space $\Conf(k,I^N\backslash \sqcup B_i)$ deformation retracts onto $\Conf(k,\mathbb{R}^N\backslash \{p_0,\dots, p_m\})$, so let us study the space $\Conf(k,\mathbb{R}^N\backslash \{p_0,\dots, p_m\}).$
    Recall that for $m=0$, and any $k,N$, the space $\Conf(k,\mathbb{R}^N)$ is $(N-2)$--connected. 
    We will show by induction on $k$ that for every $m$, the space $\Conf(k,\mathbb{R}^N\backslash \{p_0,\dots, p_m\})$ is $(N-2)$-connected.
    This is clearly true for $k=1$ and any $m$, because $\mathbb{R}^N\backslash \{p_0,\dots, p_m\}$ is equivalent to a wedge of $(N-1)$-spheres, and as such $(N-2)$--connected. 
    Let us take $k>1$ such that this hypothesis is verified, i.e. $\Conf(k,\mathbb{R}^N\backslash \{p_0,\dots, p_m\})$ is $(N-2)$--connected for all $m$.
    There is a fiber sequence:
    \begin{center}
        \begin{tikzcd}
            \Conf(k-1,\mathbb{R}^N\backslash \{p_0,\dots, p_m,p_{m+1}\})\arrow{d}\\
            \Conf(k,\mathbb{R}^N\backslash \{p_0,\dots, p_m\}) \arrow{d}\\
            \Conf(1,\mathbb{R}^N\backslash \{p_0,\dots, p_m\}).
        \end{tikzcd}
    \end{center}
    The top space is $(N-2)$--connected by induction hypothesis, the bottom space is homeomorphic to a wedge of $(N-1)$-spheres so it is $(N-2)$--connected, so the middle space is $(N-2)$--connected.
    This concludes our proof by induction.
\end{proof}

Now let us fix a forest $T$ until the end of the subsection, and decompose our space $\E_N(T)$, by introducing the space $\V_N(T)$ of possible configurations for the vertices of a given forest $T$. 
\begin{defi}
    Let $\V_N(T)$ be the subspace of configurations of points inside $\mathbb{R}^N\times\mathbb{R}$ which are the vertex images of a certain element $(\phi,\epsilon)$ of $\E_N(T)$.
\end{defi}

There is a partial order on the vertices of $T$ induced by the structure of the forest. 
Let $V$ be the poset of internal vertices of $T$.

Let $\Pos$ be the category of posets. We then denote by $\Pos(P,Q)$ the order-preserving maps from $P$ to $Q$. 
There is a map $\rho: \V_N(T)\rightarrow \Pos(V,\mathbb{R})$ which sends a configuration of points in $\mathbb{R}^N\times\mathbb{R}$ to their projection on the time dimension.

\begin{lem}
    The space $\V_\infty(T)$ is contractible.
\end{lem}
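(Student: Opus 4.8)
The plan is to study the map $\rho\colon\V_\infty(T)\to\Pos(V,\mathbb{R})$ introduced above, to show it is a weak homotopy equivalence, and to use that its target is contractible. The starting point is a concrete description of $\V_\infty(T)$. I claim it is the open subspace $U$ of $\Conf(|V|,I^\infty\times\mathbb{R})$ consisting of those configurations $(c_v)_{v\in V}$ of internal vertices whose time coordinates $\bigl(\mathrm{pr}_\mathbb{R}(c_v)\bigr)_{v\in V}$ form a \emph{strictly} order-preserving map $V\to\mathbb{R}$, i.e.\ $\mathrm{pr}_\mathbb{R}(c_v)<\mathrm{pr}_\mathbb{R}(c_w)$ whenever $v<w$ in $V$. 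One inclusion is immediate from Definition~\ref{embeddingspaces}: for $(\phi,\epsilon)\in\E_\infty(T)$, condition~(i) makes $\tilde\phi$ strictly increasing along each $1$-cell, hence strictly increasing from any internal vertex to each of its descendants, and $\phi$ being an embedding the vertex images are distinct. The reverse inclusion is where the infinitude of available coordinate directions is used: given $(c_v)_{v\in V}\in U$ one builds $(\phi,\epsilon)$ by hand, taking $\epsilon$ small enough that the vertex cubes $C_\epsilon$ placed at the $c_v$ are pairwise disjoint, putting a copy of the rigid corolla $\phi_\alpha(\epsilon)$ at each $c_v$, and joining the resulting edge-germs by arcs that are graphs over the time coordinate and are pushed into coordinate directions not used so far; since there are only finitely many edges there is always enough room in $I^\infty\times\mathbb{R}$ to keep distinct arcs disjoint in every time slice and to keep each arc out of every vertex cube except at its two endpoints, which produces an element of $\E_\infty(T)$ with the prescribed vertex images. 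As $\V_\infty(T)$ carries the subspace topology inside the configuration space, this identifies $\V_\infty(T)$ with $U$, an open subset of $(I^\infty)^V\times\mathbb{R}^V$.

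Now the image of $\rho$ is the set $P\subset\mathbb{R}^V$ of strictly order-preserving functions $V\to\mathbb{R}$, which is non-empty (take the rank function of a linear extension of $V$) and convex, hence contractible. Under the identification of the previous paragraph, $\rho\colon U\to P$ is the restriction to the open subset $U$ of the trivial fibration $(I^\infty)^V\times P\to P$, so it is a Serre microfibration (the restriction of a Serre fibration to an open subset is always a Serre microfibration, as in the example above), and it is onto $P$. Its fibre over $\tau\in P$ is
\[\rho^{-1}(\tau)=\bigl\{(x_v)_{v\in V}\in(I^\infty)^V \;\bigm|\; x_v\neq x_w\ \text{whenever}\ v\neq w\ \text{and}\ \tau(v)=\tau(w)\bigr\},\]
the complement in the convex set $(I^\infty)^V$ of finitely many linear subspaces of infinite codimension. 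Writing $(I^\infty)^V$ as the increasing union of the finite-dimensional convex bodies $(I^N)^V$ exhibits this fibre as the increasing union of the complements of finitely many codimension-$N$ affine subspaces, each of which is $(N-2)$-connected by general position — the same inductive argument as in the proof of Lemma~\ref{connectedness} — so the fibre is weakly contractible.

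Finally, by \cite[Lemma~2.2]{weisslemma} a Serre microfibration with weakly contractible fibres is a Serre fibration, and in particular a weak homotopy equivalence; hence $\rho$ is one, $\V_\infty(T)\simeq P$ is weakly contractible, and, being an open subset of a CW complex, $\V_\infty(T)$ is contractible. I expect the only genuine work to be the reverse inclusion in the first paragraph — verifying that every time-order-respecting configuration of the internal vertices completes to an honest embedding with the rigid corolla behaviour required by condition~(iii) — after which the argument is formal given Lemma~\ref{connectedness} and the cited facts about microfibrations.
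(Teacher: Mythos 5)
Your proof follows the same route as the paper's: the projection $\rho$ to the space of order-preserving maps $V\to\mathbb{R}$ is a Serre microfibration (being the restriction of a projection to an open subspace) with weakly contractible fibres over a convex, hence contractible, target. You are in fact slightly more careful than the paper in two places — restricting the target to the \emph{strictly} order-preserving maps, which is the actual image of $\rho$, and justifying that every admissible vertex configuration is realized by some embedding — but the fibre you describe (the complement of the diagonals over each time level) is exactly the paper's product $\prod_t\Conf(\vert p^{-1}(t)\vert, I^N)$ of configuration spaces, so the argument is essentially identical.
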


\begin{proof}
The space $\V_N(T)$ is an open subset of $\Conf(\vert V\vert,I^N\times\mathbb{R})$, which is itself an open subset of $(I^N\times\mathbb{R})^{\vert V\vert}$.
The map $\rho$ is the restriction of the projection $(I^N\times\mathbb{R})^{\vert V\vert}\rightarrow \mathbb{R}^{\vert V \vert}$ to the open subspace $\V_N(T)$, so it is a microfibration.

The fiber over a given poset map $p$ is given by configurations of points in $I^N$ above each time point which is the image of one (or multiple) vertices.
This means that the fiber can be directly expressed as $\displaystyle \prod_{t\in\mathbb{R}} \Conf(\vert p^{-1}(t)\vert,I^N)$.
This is a product of $(N-2)$-connected spaces so it is $(N-2)$-connected.
When $N$ goes to infinity, this fiber is contractible.
The map from $\V_\infty(T)$ to  $\Pos(V,\mathbb{R})$ is thus a microfibration with contractible fibers and hence a weak equivalence.

The space $\Pos(V,\mathbb{R})$ is an intersection of convex subspaces of $\mathbb{R}^{\vert V\vert}$ so it is itself convex (and again non empty), and as such contractible.
It follows that the space $\V_\infty(T)$ is contractible.
\end{proof}

We would like to study the map $\E_N(T)\rightarrow \V_N(T)$.
This map is a microfibration for every $N$.
Let us call $\F_N(T)$ the fiber above a configuration of vertex images $(v_i)\in\V_N(T)$.
In other words, the $(v_i)$ specify the position of the vertices and we just need to connect them together in the shape of $T$ to get an element in $\F_N(T)$. 

Let us introduce an intermediary space $\G_N(T)$ which contains all elements of $\F_N(T)$ such that the cubes are far enough away from each other.
\begin{defi}
For a given configuration of vertex images $(v_i)$ with time projections $t_i$, let us set $\epsilon_{\aaaaah} = \underset{i\neq j}{\min}(\frac 13 \vert t_i - t_j\vert,\dist(v_i,\partial I^N))$.
Then we define $\G_N(T)$ to be the subspace of $\F_N(T)$ containing those $(\phi,\epsilon)$ where $\epsilon\leq\epsilon_{\aaaaah}$.
\end{defi}
The condition on $\epsilon$ in the previous definition ensures that for every element in $\G_N(T)$, the only cubes whose time dimensions overlap are those whose centers have exactly the same time projection. 
(We also ensure that the disjoint union of all the cubes is strictly included inside $I^N$).

\begin{lem}\label{smallcubes}
The space $\G_N(T)$ is a strong deformation retract of $\F_N(T)$.
\end{lem}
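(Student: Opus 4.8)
The plan is to construct an explicit strong deformation retraction of $\F_N(T)$ onto $\G_N(T)$ by simultaneously shrinking all the vertex cubes while keeping the vertex images $(v_i)$ fixed. Recall that an element $(\phi,\epsilon)\in\F_N(T)$ is determined by the size parameter $\epsilon$ of the vertex cubes, together with the way the $1$-cells connect the (fixed) vertex images; by condition (iii) in Definition \ref{embeddingspaces}, near each internal vertex $v$ the embedding looks like the standard affine model $\phi_\alpha(\epsilon)$ inside the cube $C_\epsilon$, and outside these cubes the edges are monotone in the time coordinate. First I would note that the assignment $(\phi,\epsilon)\mapsto\epsilon_{\aaaaah}$ is continuous in $(v_i)$ and hence the subspace $\G_N(T)=\{\epsilon\leq\epsilon_{\aaaaah}\}$ is well-defined and closed in $\F_N(T)$.

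The retraction $H:\F_N(T)\times I\to\F_N(T)$ is defined as follows. For $(\phi,\epsilon,s)$, set
\[\epsilon(s) = (1-s)\epsilon + s\min(\epsilon,\epsilon_{\aaaaah}),\]
so that $\epsilon(0)=\epsilon$, $\epsilon(1)=\min(\epsilon,\epsilon_{\aaaaah})\leq\epsilon_{\aaaaah}$, and $\epsilon(s)=\epsilon$ for all $s$ whenever we already start in $\G_N(T)$. Given the new cube size $\epsilon(s)\leq\epsilon$, I would rescale: on each vertex cube $C_{\epsilon(s)}\subset C_\epsilon$ concentric with $C_\epsilon$, put the standard corolla $\phi_\alpha(\epsilon(s))$; on the complementary part of each original $1$-cell (between a shrunk cube and either a leaf, a root, or the boundary of an adjacent vertex cube), extend linearly in the time coordinate so that the edge still joins up correctly and remains monotone. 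This defines a new embedding $\phi_s$ with $(\phi_s,\epsilon(s))\in\F_N(T)$, and one checks continuity in all variables. The key point is that shrinking cubes can only \emph{decrease} intersections, so conditions (i)--(iii) of Definition \ref{embeddingspaces} are preserved throughout, and at $s=1$ the resulting cubes satisfy the separation condition $\epsilon(1)\leq\epsilon_{\aaaaah}$, landing in $\G_N(T)$.

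To see this is a \emph{strong} deformation retraction I would verify: $H(-,0)=\id_{\F_N(T)}$ by construction; $H(-,1)$ lands in $\G_N(T)$; and $H(x,s)=x$ for all $s$ when $x\in\G_N(T)$, since then $\min(\epsilon,\epsilon_{\aaaaah})=\epsilon$ makes $\epsilon(s)\equiv\epsilon$ and the rescaling is the identity. The main obstacle I anticipate is purely bookkeeping rather than conceptual: one must describe the linear re-extension of the $1$-cells on the annular regions between the old and new vertex cubes carefully enough to see that $\phi_s$ is still a homeomorphism onto its image for every $s$ (in particular that distinct edges stay disjoint — which again follows because we only shrink cubes and the edges between them retain their time-monotonicity), and that the whole family depends continuously on $(\phi,\epsilon,s)$. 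Once the formulas for the re-parametrization are written down, these checks are routine.
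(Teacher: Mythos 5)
Your proposal is essentially the paper's argument: the retraction is $(\phi,\epsilon)\mapsto(\phi,\min(\epsilon,\epsilon_{\max}))$ and the homotopy is the linear shrinking $\epsilon(s)=(1-s)\epsilon+s\min(\epsilon,\epsilon_{\max})$, fixing $\G_N(T)$ pointwise throughout. The one place you diverge is in insisting on modifying the embedding $\phi$ itself (re-inserting the corolla $\phi_\alpha(\epsilon(s))$ and re-extending the $1$-cells linearly on the annular regions), and this is both unnecessary and the source of the only difficulty you anticipate. Condition (iii) of Definition \ref{embeddingspaces} only asks that $\phi$ agree with the standard affine corolla on \emph{some} neighborhood of each internal vertex and that $\im(\phi)\cap C_\epsilon$ be exactly the corolla; since $\phi_\alpha(\epsilon)$ restricted to the concentric cube $C_{\epsilon'}$ is $\phi_\alpha(\epsilon')$ and $\im(\phi)\cap C_{\epsilon'}\subset\im(\phi)\cap C_\epsilon$, the \emph{same} $\phi$ already satisfies all the conditions for every $0<\epsilon'\leq\epsilon$. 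So the homotopy can (and in the paper does) leave $\phi$ untouched and only move the parameter $\epsilon$; this makes continuity immediate and removes any worry about the re-extended edges remaining disjoint. With that simplification your argument is complete.
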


\begin{proof}
    There is a retraction $r:\F_N(T)\rightarrow \G_N(T)$ which sends $(\phi,\epsilon)$ to $\left(\phi,\min(\epsilon, \epsilon_{\aaaaah})\right)$.
    There is also an inclusion $i:\G_N(T)\hookrightarrow \F_N(T)$.
    The composition $r\circ i$ is the identity, and $i\circ r$ is homotopic to the identity via a linear shrinking of the cubes.
\end{proof}

\begin{lem}\label{Gconnected}
    The space $\G_N(T)$ is $(N-3)$-connected.
\end{lem}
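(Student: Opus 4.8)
The plan is to analyze the space $\G_N(T)$ by induction on the number of internal vertices of $T$, peeling off vertices one at a time according to the partial order on $V$. Recall that an element of $\G_N(T)$ is an embedding of the forest $T$ into $I^N\times\mathbb{R}$ with a chosen $\epsilon\leq\epsilon_{\aaaaah}$, where the vertex images $(v_i)$ are fixed; the only freedom is in how the $1$-cells connecting the vertices (and connecting vertices to the roots and leaves) are routed through $I^N\times\mathbb{R}$, subject to conditions (i)--(iii) of Definition \ref{embeddingspaces}. Because $\epsilon\leq\epsilon_{\aaaaah}$, the vertex cubes are pairwise disjoint and strictly inside $I^N\times\mathbb{R}$, and the only overlaps in the time coordinate occur between vertices with identical time projection. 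I would first observe that the space of allowable $\epsilon$'s, for fixed embedding, is an interval, hence contractible, so up to homotopy we may as well fix $\epsilon$ small and study only the space of embeddings; alternatively one keeps $\epsilon$ as part of the data throughout and notes it contributes nothing to connectivity.

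The key step is a ``layer by layer'' fibration argument. Order the internal vertices so that we treat them in a linear extension of the poset $V$; equivalently, think of slicing $I^N\times\mathbb{R}$ by the time coordinate and building the embedding on successive time-slabs between consecutive junctions. Restricting an embedding to the portion of $T$ living below a given junction $t$ (i.e. forgetting the corollas at time $t$ and above) gives a map to a space of the same type for a smaller forest, and the fiber records the ways of connecting the strands arriving at time $t$ from below to the inputs of the corollas sitting at height $t$, staying inside $I^N$. Each such connecting problem is, after a straight-line deformation that makes strands monotone in time, homotopy equivalent to a space of paths in $I^N$ minus the already-placed vertex cubes with prescribed endpoints — i.e. to $\Conf$-type and path-space data in $I^N\backslash\sqcup B_i$. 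Using Lemma \ref{connectedness} (configuration spaces of the cube with finitely many open subcubes removed are $(N-2)$-connected) together with the fact that path spaces between fixed points in such a space are $(N-3)$-connected (loop space drops connectivity by one, and $\pi_1$ of $\mathbb{R}^N\backslash(\text{points})$ vanishes for $N\geq 3$ while the higher homotopy is that of a wedge of $(N-1)$-spheres), each fiber is $(N-3)$-connected. The base of each fibration is of strictly smaller complexity, so $(N-3)$-connected by induction; the base case of a single corolla is the space of affine embeddings of a corolla with fixed vertex cube, matching the standard model $\phi_\alpha(\epsilon)$ up to the choice of a frame, which is connected and in fact $(N-2)$-connected (the space of $n$-tuples of disjoint cubes in $I^N$, or equivalently a small configuration space, is highly connected). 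A long exact sequence of homotopy groups then propagates $(N-3)$-connectivity to $\G_N(T)$.

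The main obstacle I expect is making precise the claim that ``the fiber of forgetting the top layer is homotopy equivalent to a path/configuration space in $I^N\backslash\sqcup B_i$''. The subtlety is that the strands are not free paths: they are $1$-cells of an embedded forest, they must be disjoint from each other and from the vertex cubes, their time projection must be linear and increasing (condition (i)), and near each internal vertex the embedding must literally equal the standard corolla embedding $\phi_\alpha(\epsilon)$ inside the vertex cube (condition (iii)), with no other part of the image meeting that cube. Handling the disjointness of strands forces genuine \emph{configuration} spaces of paths rather than products of path spaces, and the monotonicity-in-time constraint must be straightened out by an explicit deformation retraction (rescaling each strand to have constant time-speed) before Lemma \ref{connectedness} applies. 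I would isolate this as a sublemma: the space of ways to connect a fixed finite set of ``incoming'' points on a lower time-slice to a fixed finite set of ``outgoing'' points on an upper time-slice by disjoint time-monotone arcs avoiding the vertex cubes deformation retracts onto $\Conf$ of the relevant number of points in a slice $I^N\backslash\sqcup B_i$, hence is $(N-2)$-connected, with the mild $-1$ loss coming only when one end is pinned at a corolla leaf. Once that sublemma is in hand, the inductive assembly via the long exact sequence of the microfibrations (which are honest Serre fibrations on the relevant open subspaces, or can be treated via Lemma \ref{weisslemma}-type arguments) is routine.
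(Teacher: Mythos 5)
Your proposal follows essentially the same route as the paper: slice the embedding by the junction times, reduce each slab to (Moore) path spaces of configuration spaces of $I^N$ with the vertex cubes removed so that strand-disjointness is encoded by $\Conf$ rather than by products of path spaces, and feed everything into Lemma \ref{connectedness}; the paper merely packages the assembly as an iterated homotopy pullback of \emph{free} path spaces $W_j, W'_j$ along source/target fibrations over the slice spaces $C_j, C'_j$ (all $(N-2)$-connected, the pullback costing one degree), rather than as your induction with based-path-space fibers and long exact sequences. The only wrinkle is that your closing sublemma asserts the arcs-with-both-endpoints-pinned space is $(N-2)$-connected after retracting onto $\Conf$, whereas it is the loop space of that configuration space and hence $(N-3)$-connected, as you correctly state earlier in the argument.
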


\begin{proof}
 The space $\G_N(T)$ can be expressed as an iterated pullback $W_0\times_{C_0}W'_0\times_{C'_0}\dots\times_{C'_{k-1}}W_{k}$, where $k$ is the number of cubes with disjoint time projections (see figure \ref{numberofcubes}). 
Very broadly, the $W_j$ and $W'_j$ are spaces of paths of configurations and the $C_j$ and $C'_j$ are the configuration spaces where the paths connect to each other via target and source maps (see figures \ref{numberofcubes} and \ref{WandC}).

Let us first describe the spaces $C_j$ and $C'_j$ in detail. We need to establish some notation. 
Recall that $T$ is a forest with $l$ leaves and $r$ roots.
The space $\G_N(T)$ is a subspace of $\F_N(T)$, the fiber above a configuration $(v_i)$ of images for the vertices of the tree.
We consider the set $\{t_0,\dots,t_{k-1}\}$, which are the images of the vertices (with no multiplicity).
Let $n_j$ be the number of cubes which ended before (or at) $t_{j}$, and $m_j$ the number of cubes whose time projection contains $t_{j}$. 
Let $b_j$ be the number of paths arriving at the point $t_j$: using our notation, $b_j = l-(n-1)n_j$ (see figure \ref{numberofcubes}).

\begin{figure}[h!]
    \def\svgwidth{0.8\hsize}
    \begingroup%
  \makeatletter%
  \providecommand\color[2][]{%
    \errmessage{(Inkscape) Color is used for the text in Inkscape, but the package 'color.sty' is not loaded}%
    \renewcommand\color[2][]{}%
  }%
  \providecommand\transparent[1]{%
    \errmessage{(Inkscape) Transparency is used (non-zero) for the text in Inkscape, but the package 'transparent.sty' is not loaded}%
    \renewcommand\transparent[1]{}%
  }%
  \providecommand\rotatebox[2]{#2}%
  \newcommand*\fsize{\dimexpr\f@size pt\relax}%
  \newcommand*\lineheight[1]{\fontsize{\fsize}{#1\fsize}\selectfont}%
  \ifx\svgwidth\undefined%
    \setlength{\unitlength}{371.57375065bp}%
    \ifx\svgscale\undefined%
      \relax%
    \else%
      \setlength{\unitlength}{\unitlength * \real{\svgscale}}%
    \fi%
  \else%
    \setlength{\unitlength}{\svgwidth}%
  \fi%
  \global\let\svgwidth\undefined%
  \global\let\svgscale\undefined%
  \makeatother%
  \begin{picture}(1,0.89015812)%
    \lineheight{1}%
    \setlength\tabcolsep{0pt}%
    \put(0,0){\includegraphics[width=\unitlength,page=1]{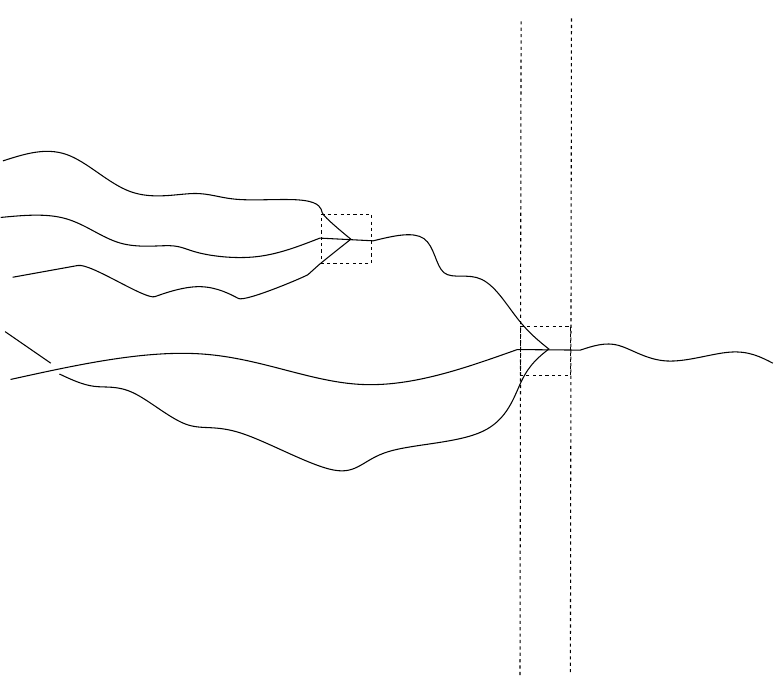}}%
    \put(0.62385238,0.0075474){\color[rgb]{0,0,0}\makebox(0,0)[lt]{\lineheight{1.25}\smash{\begin{tabular}[t]{l}$t_j-\epsilon$\end{tabular}}}}%
    \put(0.70962118,0.00453506){\color[rgb]{0,0,0}\makebox(0,0)[lt]{\lineheight{1.25}\smash{\begin{tabular}[t]{l}$t_j+\epsilon$\end{tabular}}}}%
    \put(0,0){\includegraphics[width=\unitlength,page=2]{numberofcubes.pdf}}%
    \put(0.1281371,0.86530607){\color[rgb]{0,0,0}\makebox(0,0)[lt]{\lineheight{1.25}\smash{\begin{tabular}[t]{l}$n_j$ cubes in this zone\end{tabular}}}}%
    \put(0.58639318,0.8635415){\color[rgb]{0,0,0}\makebox(0,0)[lt]{\lineheight{1.25}\smash{\begin{tabular}[t]{l}$m_j$ cubes in this zone\end{tabular}}}}%
    \put(0,0){\includegraphics[width=\unitlength,page=3]{numberofcubes.pdf}}%
    \put(0.3729565,0.69519216){\color[rgb]{0,0,0}\makebox(0,0)[lt]{\lineheight{1.25}\smash{\begin{tabular}[t]{l}$b_j$ paths arriving at $t_j-\epsilon$\end{tabular}}}}%
  \end{picture}%
\endgroup%
    \caption{An element in $\G_N(T)$}
    \label{numberofcubes}
\end{figure}

\begin{figure}[h!]
    \def\svgwidth{\hsize}
    \begingroup%
  \makeatletter%
  \providecommand\color[2][]{%
    \errmessage{(Inkscape) Color is used for the text in Inkscape, but the package 'color.sty' is not loaded}%
    \renewcommand\color[2][]{}%
  }%
  \providecommand\transparent[1]{%
    \errmessage{(Inkscape) Transparency is used (non-zero) for the text in Inkscape, but the package 'transparent.sty' is not loaded}%
    \renewcommand\transparent[1]{}%
  }%
  \providecommand\rotatebox[2]{#2}%
  \newcommand*\fsize{\dimexpr\f@size pt\relax}%
  \newcommand*\lineheight[1]{\fontsize{\fsize}{#1\fsize}\selectfont}%
  \ifx\svgwidth\undefined%
    \setlength{\unitlength}{728.12218373bp}%
    \ifx\svgscale\undefined%
      \relax%
    \else%
      \setlength{\unitlength}{\unitlength * \real{\svgscale}}%
    \fi%
  \else%
    \setlength{\unitlength}{\svgwidth}%
  \fi%
  \global\let\svgwidth\undefined%
  \global\let\svgscale\undefined%
  \makeatother%
  \begin{picture}(1,0.36991591)%
    \lineheight{1}%
    \setlength\tabcolsep{0pt}%
    \put(0,0){\includegraphics[width=\unitlength,page=1]{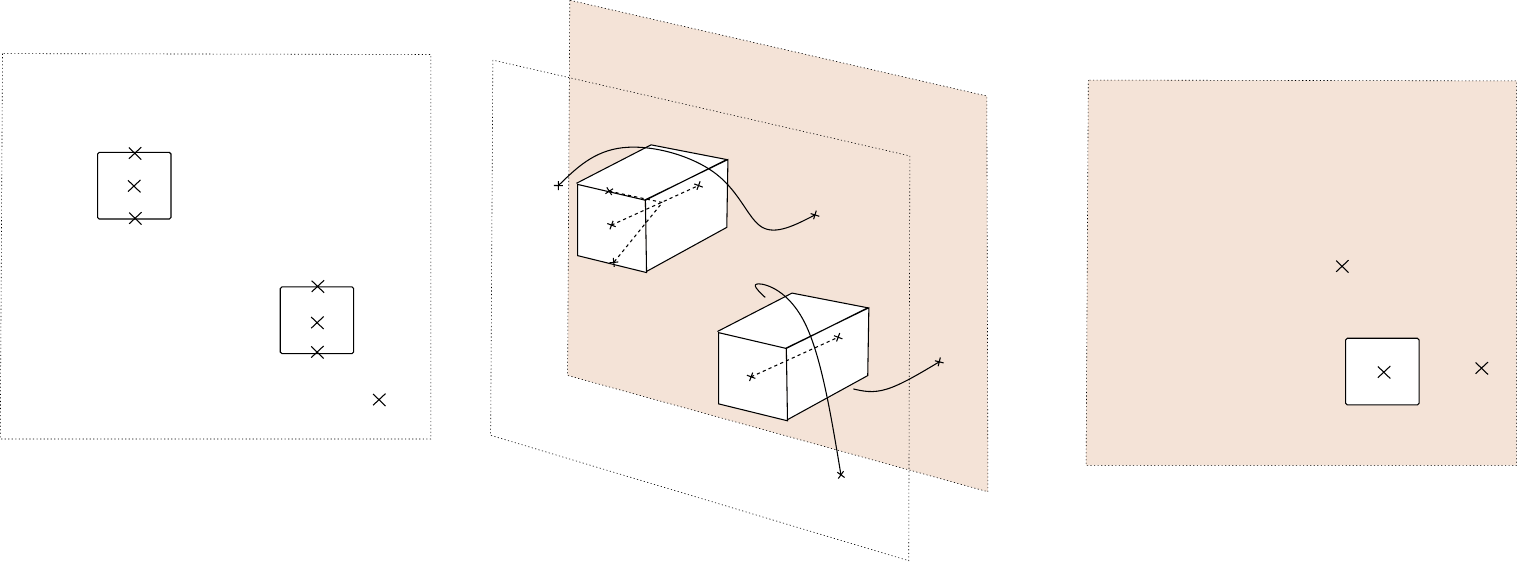}}%
    \put(0.48285162,0.35711618){\color[rgb]{0,0,0}\makebox(0,0)[lt]{\lineheight{1.25}\smash{\begin{tabular}[t]{l}$W'_j$\end{tabular}}}}%
    \put(0.81065766,0.34370593){\color[rgb]{0,0,0}\makebox(0,0)[lt]{\lineheight{1.25}\smash{\begin{tabular}[t]{l}$C'_j$\end{tabular}}}}%
    \put(0.16100567,0.35488114){\color[rgb]{0,0,0}\makebox(0,0)[lt]{\lineheight{1.25}\smash{\begin{tabular}[t]{l}$C_j$\end{tabular}}}}%
    \put(0,0){\includegraphics[width=\unitlength,page=2]{WandC.pdf}}%
  \end{picture}%
\endgroup%
    \caption{Illustration of the spaces $C_{j}$, $C'_{j}$ and $W'_{j}$, for $j$ even}
    \label{WandC}
\end{figure}

\begin{itemize}
\item The space $C_{j}$ is the space at the "beginning" of the cubes: an element in $C_{j}$ is a pair $(\epsilon, x)$ where $\epsilon\leq\epsilon_{\aaaaah}$ and $x$ is a configuration in $\Conf(b_{j}, I^N)$ with $n$ points marked off specifically inside each of $m_{j}$ cubes of size $2\epsilon$.
\item The space $C'_{j}$ is where cubes end: an element in $C'_{j}$ is a pair $(\epsilon, x)$ where $\epsilon\leq\epsilon_{\aaaaah}$ and $x$ is a configuration in $\Conf(b_{j}, I^N)$ with one point marked off in the center of each of $m_{j}$ cubes of size $2\epsilon$ (see figure \ref{WandC}).
\end{itemize}

Let us now describe the spaces $W_j$ and $W'_j$ in detail (see figure \ref{WandC}).
In full generality, note that a space of paths $I\rightarrow X\times I$ which is the identity on $I$ is equivalent to the space of paths $I\rightarrow X$.
We use this fact liberally below to make the proof (somewhat) more readable.

\begin{itemize}
\item First, for $k=0$, the space $W_0$ is the subspace of $\mathbb{R}\times P^{\Moore}(\Conf(l,I^N))$ consisting of those pairs 
$(\epsilon, \gamma)$ such that $\gamma:[t,t_0-\epsilon] \rightarrow \Conf(l,I^N)$ and $\epsilon\leq\epsilon_{\aaaaah}$.
The case of $W_k$ is similar.

\item The space $W'_{j}$ for $0\leq j\leq k-1$ is the space of paths of configurations that have to avoid the open cubes of size $2\epsilon$ centered around those $v_i$ whose time projection is $t_j$ (we write $v_i=(p_i,t_j)$). 
Therefore, $W'_{j}$ is equivalent to the subspace of $\mathbb{R}\times P^{\Moore}(\Conf(b_{j}-nm_{j},I^N\backslash \sqcup_i \{p_i\}))$ consisting of those pairs 
$(\epsilon, \gamma)$ such that $\epsilon\leq\epsilon_{\aaaaah}$ and \[\gamma:[t_j-\epsilon,t_j+\epsilon] \rightarrow \Conf(b_{j}-nm_{j},I^N\backslash \sqcup B_\epsilon),\] where the $B_\epsilon$ are cubes of size $2\epsilon$.
\item The space $W_{j}$ (for $1\leq j \leq k-1$) is a space of paths of configurations in $I^N$ ("between" the cubes). 
$W_{j}$ is the subspace of $\mathbb{R}\times P^{\Moore}(\Conf(b_{j},I^N))$ consisting of those pairs $(\epsilon, \gamma)$ such that \[\gamma:[t_{j-1}+\epsilon, t_j-\epsilon] \rightarrow \Conf(b_{j},I^N)\] (and such that $\epsilon\leq\epsilon_{\aaaaah}$).
\end{itemize}

The map $W_{j}\rightarrow C'_{j-1}$ (for $j\geq 1$) is a source map and the map $W_{j}\rightarrow C_{j}$ is a target map. These maps are fibrations.
The map $W'_{j}\rightarrow C_{j}$ is the composition of the source map to $\mathbb{R}\times \Conf(b_{j},I^N\backslash \sqcup_i \{p_i\})$ and the inclusion into $C_{j}$ which adds $n$ points around each of the $p_i$, evenly spaced inside a cube of size $2\epsilon$.
As the composition of a fibration and an equivalence, this map is also a fibration.
The map $W'_{j}\rightarrow C'_{j}$ is similarly the composition of a target map and an inclusion.
Therefore, every map in the diagram is a fibration.

Each $C_j$ (resp. $C'_j$) is equivalent to the configuration space of $I^N$ from which we have removed special cubes around the vertex points,
i.e. the space $\bigcup_\epsilon\Conf(b_j, I^N\backslash \sqcup B_\epsilon)$.
Each space $\Conf(b_j, I^N\backslash \sqcup B_\epsilon)$ is $(N-2)$-connected by lemma \ref{connectedness}.
Therefore, $C_j$ and $C'_j$ are $(N-2)$-connected for all $j$.

Let us study the homotopy type of the $W'_j$. 
Let us call $(p_i,t_j)$ the vertex points with time projection $t_{j}$.
There is a map $\eta$ to the space $P^{\Moore}(\Conf(b_{j}-nm_{j},I^N\backslash \sqcup_i \{p_i\}))$; it sends $(\epsilon, \gamma)$ to just $\gamma$.
There is a map $\zeta$ in the other direction which sends a path 
\begin{align*}
    \gamma: [0,r]&\rightarrow \Conf(b_{j}-nm_{j},I^N\backslash \sqcup_i \{p_i\})
\end{align*}
to the pair $(\epsilon, \gamma')$
where $\epsilon$ is the minimum of $\frac r2$ and the distance between $p_j$ and the $\gamma(t)$ for all $t$. 
The path $\gamma'$ is $\gamma$ linearly rescaled to be defined on $[-\epsilon,\epsilon]$.
There is a homotopy between $\eta\circ \zeta$ and the identity given by continuously rescaling the path to $[-\epsilon,\epsilon]$, where $\epsilon$ depends continuously on $\gamma$.
There is a homotopy between $\zeta\circ\eta$ and the identity given by continuously shrinking $\epsilon$.
The space $W'_j$ is therefore equivalent to  $P^{\Moore}(\Conf(b_{j}-nm_{j},I^N\backslash \sqcup_i \{p_i\}))$; so by lemma \ref{connectedness}, it is $(N-2)$-connected.

It can be similarly shown that the spaces $W_j$ are $(N-2)$-connected.
This leads us to conclude that the space $\G_N(T)$ is $(N-3)$-connected.\qedhere

\end{proof}

\begin{cor}\label{treetree}
    For every finite $n$-ary rooted forest $T$, the space $\E_\infty(T)$ is contractible.
\end{cor}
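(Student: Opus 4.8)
The plan is to assemble the connectivity estimates of the preceding lemmas and let the ambient dimension tend to infinity, using that a Serre microfibration with weakly contractible fibres is a weak homotopy equivalence (\cite[Lemma~2.2]{weisslemma}). First I would record that, since a forest $T$ is compact, the image of any embedding of $T$ into $I^\infty\times\mathbb{R}$ lies in some $I^N\times\mathbb{R}$, so that $\E_\infty(T)=\operatorname{colim}_N\E_N(T)$ and $\V_\infty(T)=\operatorname{colim}_N\V_N(T)$ along the inclusions $I^N\times\mathbb{R}\hookrightarrow I^{N+1}\times\mathbb{R}$; in particular, since spheres and disks are compact, weak homotopy groups of these colimits are computed stagewise, and a lifting problem over a disk into $\E_\infty(T)$, $\V_\infty(T)$ factors through a finite stage. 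Fixing a configuration of vertex images $(v_i)$, which lies in $I^{N_0}\times\mathbb{R}$ for some $N_0$, the fibre of $\E_\infty(T)\to\V_\infty(T)$ over $(v_i)$ is then $\F_\infty(T)=\operatorname{colim}_{N\geq N_0}\F_N(T)$.

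Next I would upgrade connectivity to contractibility on this fibre. By Lemma~\ref{smallcubes} the space $\F_N(T)$ deformation retracts onto $\G_N(T)$, which is $(N-3)$-connected by Lemma~\ref{Gconnected}; these spaces are non-empty, so $\pi_k(\F_\infty(T))=\operatorname{colim}_N\pi_k(\F_N(T))=0$ for every $k$, that is, $\F_\infty(T)$ is weakly contractible. The map $\E_\infty(T)\to\V_\infty(T)$ is a Serre microfibration: this is the $N=\infty$ instance of the microfibration property noted before the statement, and it can in any case be checked directly from the microfibrations $\E_N(T)\to\V_N(T)$ by the compactness argument above. Hence \cite[Lemma~2.2]{weisslemma} applies and shows $\E_\infty(T)\to\V_\infty(T)$ is a weak homotopy equivalence; since $\V_\infty(T)$ is contractible by the lemma preceding Lemma~\ref{smallcubes}, the space $\E_\infty(T)$ is weakly contractible, which is the assertion of the corollary.

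The genuine work of the section --- the fibre-sequence analysis of $\G_N(T)$ and the connectivity of holey configuration spaces (Lemma~\ref{connectedness}) --- has already been carried out, so the corollary itself is essentially an assembly. The one point that deserves care is the passage $N\to\infty$: the fibres $\F_N(T)$ are only $(N-3)$-connected, not contractible, and one must verify that this growing connectivity genuinely yields a weakly contractible colimit, and that the microfibration property survives in the colimit. Both follow from compactness of spheres and disks, so I do not expect a serious obstacle here; the difficulty of this part of the paper is concentrated upstream in Lemma~\ref{Gconnected}.
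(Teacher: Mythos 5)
Your proposal is correct and follows essentially the same route as the paper: combine Lemma~\ref{smallcubes} and Lemma~\ref{Gconnected} to see the fibres of $\E_\infty(T)\to\V_\infty(T)$ are weakly contractible, invoke the microfibration criterion of \cite[Lemma~2.2]{weisslemma}, and use contractibility of $\V_\infty(T)$. The only difference is that you spell out the colimit/compactness justification for passing from $(N-3)$-connectivity to contractibility at $N=\infty$, which the paper leaves implicit.
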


\begin{proof}
    Lemma \ref{smallcubes} establishes that the fiber of the map $\E_N(T)\rightarrow \V_N(T)$, denoted by $\F_N(T)$, is equivalent to $\G_N(T)$.
    Since by lemma \ref{Gconnected}, $\G_N(T)$ is $(N-3)$-connected, the microfibration $\E_\infty(T)\rightarrow \V_\infty(T)$ is a weak equivalence and $\E_\infty(T)$ is contractible
\end{proof}

\subsection{Topological model}
We would now like to introduce a topological category and show that it is equivalent to the PROP associated to the cube cutting operad.
\begin{defi}
Let $\HT_N$ be the following non-unital category:\\
The space of objects is $\displaystyle \bigsqcup \UConf(n,I^N)\times\mathbb{R}$.\\
The space of morphisms is given by a tuple $(t_x,t_y,T,\phi,\epsilon)$ where $T$ is a $k$-ary forest, $(\phi,\epsilon)$ is an element of $\E_N(T)$ such that $\tilde{\phi}$ sends leaves to $t_x$ and roots to $t_y$, and $t_x<t_y$.
The source and target maps are given by $(\phi(t_x))$ and $(\phi(t_y))$.
\end{defi}
This category is not unital because there cannot be a morphism from $(x,t_x)$ to $(x,t_x)$.
When $N=\infty$, we set $\HT = \HT_\infty$.
The category $\C_N$ has an $E_N$-structure which induces an $E_N$-algebra structure on its classifying space.

There is a functor
\begin{align*}
F: \HT &\rightarrow \s(\op_{1,n})
\end{align*}
which sends each configuration to the number of points and each morphism in $\HT$ to the underlying tree. 
This functor induces a map on (thick) classifying spaces 
\begin{align*}
\mathbb{B}F: \mathbb{B}\HT &\rightarrow \mathbb{B}\s(\op_{1,n}).
\end{align*}
For a discrete category, the classifying space and thick classifying space are equivalent.
\begin{prop}\label{homeq}
    The map $\mathbb{B}F: \mathbb{B}\HT \rightarrow \mathbb{B}\s(\op_{1,n})$ is a homotopy equivalence of $E_\infty$-algebras.
\end{prop}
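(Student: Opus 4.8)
The plan is to apply the object-change machinery of Proposition \ref{basechangeproperty} to compare $\HT$ with a category built directly out of $\s(\op_{1,n})$, and then identify that category's classifying space with $\mathbb{B}\s(\op_{1,n})$. The functor $F:\HT\rightarrow\s(\op_{1,n})$ is, up to the homotopical issues below, essentially a base change of the object space along the map sending a configuration (with its time coordinate) to its cardinality. Concretely, I would first recognize that the morphism space of $\HT$ with source/target data $(t_x,t_y)$ fixed and underlying tree $T$ fixed is exactly $\E_N(T)$ (with the leaf/root time constraints), and that by Corollary \ref{treetree} this space is contractible when $N=\infty$. So fiberwise over a pair of objects, $\mathbb{B}F$ is comparing a point (the discrete morphism set of $\s(\op_{1,n})$, a set of trees/operation-sequences) with a disjoint union of contractible spaces indexed by the same set. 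This is the heart of why the map is an equivalence.

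The steps, in order, would be: (1) Check that $\HT$ (for $N=\infty$) is a fibrant non-unital topological category, i.e.\ that $(s,t):\Mor(\HT)\rightarrow\Ob(\HT)\times\Ob(\HT)$ is a Serre fibration — this should follow from the fact that the embedding spaces $\E_\infty(T)$ fiber nicely and the constraint on $t_x,t_y$ is open; one may need the microfibration-with-contractible-fibers arguments already used in the section, perhaps combined with the observation that an open subspace of a product is well-behaved. (2) Verify $\HT$ has weak units (left or right): given an object $(x,t_x)$, one produces a morphism to some $(y,t_y)$ whose underlying forest is a disjoint union of corollas (an "expansion"), and the claim that post-composition is a weak equivalence on $\Hom$-spaces reduces, via Corollary \ref{treetree}, to the contractibility of the relevant $\E_\infty$ spaces; this mirrors the way the PROP $\s(\op_{1,n})$ has the analogous property by its calculus of fractions. (3) Identify $\HT$ as $f^*\D$ for an appropriate fibrant category $\D$ with the same homotopy type of classifying space as $\s(\op_{1,n})$, where $f:\bigsqcup\UConf(n,I^\infty)\times\mathbb{R}\rightarrow\mathbb{N}$ is the "cardinality and forget" map; since each $\UConf(n,I^\infty)\times\mathbb{R}$ is path-connected (configuration spaces in $\mathbb{R}^\infty$ are contractible, and $\mathbb{R}$ is connected), the map $f$ is $0$-connected onto each component, and Proposition \ref{basechangeproperty} applies componentwise. (4) Finally, check that the discrete category used as $\D$ really does have $\mathbb{B}\D\simeq\mathbb{B}\s(\op_{1,n})$ — if $\D$ is literally $\s(\op_{1,n})$ with object space $\mathbb{N}$ made into a space in the obvious (discrete) way, this is immediate, but one must make sure the morphism spaces of $f^*\D$ then genuinely recover $\Mor(\HT)$, which again is where the contractibility of $\E_\infty(T)$ enters: the fiber of $F_1:\Mor(\HT)\rightarrow\Mor(\s(\op_{1,n}))$ over a tree $T$ with prescribed endpoints is $\E_\infty(T)$ up to the time-normalization, hence contractible, so $F_1$ is a weak equivalence and $\HT\simeq f^*\s(\op_{1,n})$ as topological categories.

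The main obstacle I expect is step (1)–(3): getting the hypotheses of Proposition \ref{basechangeproperty} to hold on the nose. Fibrancy of $\HT$ is not automatic — the source/target map lands in a product of configuration-space-times-$\mathbb{R}$ factors, and one needs the embedding to extend/deform as the endpoints move, so some care (possibly a replacement of $\HT$ by a weakly equivalent fibrant model, as is standard with these cobordism-type categories) will be required. The weak-units condition is morally clear from the operad's calculus of fractions (Proposition \ref{cancellativeoperad}) but must be transported to the topological setting, using that the relevant spaces of embeddings realizing a given expansion are contractible by Corollary \ref{treetree}. Once fibrancy and weak units are in hand, the rest is a formal consequence of Proposition \ref{basechangeproperty} together with the contractibility results already proved, plus the elementary observation that the thick and ordinary realizations agree for the discrete target.
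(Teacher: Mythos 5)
Your toolkit is the right one --- Proposition \ref{basechangeproperty}, Corollary \ref{treetree}, and fibrancy of $\HT$ are exactly the ingredients of the paper's proof, and your steps (1)--(2) match what the paper verifies --- but the pivotal step (4) contains a genuine gap. You claim that the fiber of $F_1:\Mor(\HT)\rightarrow\Mor(\s(\op_{1,n}))$ over a forest $T$ with prescribed endpoints is $\E_\infty(T)$, hence contractible, so that $\HT\simeq f^*\s(\op_{1,n})$. This is false. The component $\Mor_T(\HT)$ is (up to the time normalization) all of $\E_\infty(T)$, hence contractible; but the corresponding component of $\Mor(f^*\s(\op_{1,n}))$ is $\bigl(\UConf(l,I^\infty)\times\mathbb{R}\bigr)\times\bigl(\UConf(r,I^\infty)\times\mathbb{R}\bigr)\simeq B\Sigma_l\times B\Sigma_r$, where $l$ and $r$ are the numbers of leaves and roots of $T$. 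A map from a contractible space to $B\Sigma_l\times B\Sigma_r$ is not a weak equivalence. Dually, the fiber of $F_1$ over a point of $\Mor(f^*\s(\op_{1,n}))$ is the space of embeddings of $T$ whose leaf and root images are two \emph{fixed finite subsets}; this is not $\E_\infty(T)$, since one must still choose which leaf lands on which point, and that choice is locally constant in the fiber. The fiber therefore has $\pi_0\cong\Sigma_l\times\Sigma_r$ (it is the homotopy fiber of $*\to B\Sigma_l\times B\Sigma_r$), not a point. The root of the error surfaces already in your step (3), where you assert that $\UConf(n,I^\infty)$ is (essentially) contractible because "configuration spaces in $\mathbb{R}^\infty$ are contractible": that is true for the \emph{ordered} configuration space $\Conf(k,\mathbb{R}^\infty)$, whereas the unordered one is a model for $B\Sigma_k$.

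The repair is to apply the base-change in the opposite direction, which is what the paper does: pull $\HT$ back along the $0$-connected quotient map $f:\Conf(k,\mathbb{R}^\infty)\rightarrow\UConf(k,\mathbb{R}^\infty)$, using the fibrancy and weak units of $\HT$ that you establish in steps (1)--(2) to get $\mathbb{B}\HT\simeq\mathbb{B}f^*\HT$ from Proposition \ref{basechangeproperty}. In $f^*\HT$ the object components are the contractible spaces $\Conf(k,\mathbb{R}^\infty)\times\mathbb{R}$ and the morphism components are contractible by Corollary \ref{treetree}, so the comparison functor $f^*\HT\rightarrow\s(\op_{1,n})$ \emph{is} a levelwise weak equivalence of nerves (the nerves being homotopy pullbacks by fibrancy), and Proposition \ref{levelwisereal} finishes the argument on thick realizations. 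In short: the symmetric-group homotopy hiding in the unordered configuration spaces must be resolved on the $\HT$ side before comparing with the discrete PROP, not absorbed into a pullback of the PROP.
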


\begin{proof}
Let us consider the quotient map $f: \Conf(k,\mathbb{R}^\infty)\rightarrow \UConf(k,\mathbb{R}^\infty)$.
We can construct a category $f^*\HT$ using definition \ref{basechangeobjects}.
In order to apply proposition \ref{basechangeproperty}, we need to prove that $\HT$ is fibrant, ie that $(d_0,d_1):\Mor(\HT)\rightarrow \Ob(\HT)\times \Ob(\HT)$ is a fibration.
Let us split up $\Mor(\HT)$ and $\Ob(\HT)\times\Ob(\HT)$ into connected components.
The map $(d_0,d_1)$ then restricts to a collection of maps from the connected component of a given forest $T$ with $k$ leaves and $m$ roots, $\Mor_T(\HT)$, to $\UConf(k,\mathbb{R}^\infty)\times\UConf(m,\mathbb{R}^\infty)$.
Each of these maps is a microfibration - if we take a path of configurations and restrict the study to a small enough neighborhood of $(d_0,d_1)(\phi)$, 
we can continuously lift to a path of trees defined on $[0,\delta]$ for some small $\delta$ via affine paths as in figure \ref{redlift}, where the red paths represent liftings of the black sections of the trees.
\begin{figure}[h!] 
    \centering
    \def\svgwidth{0.8\hsize}
    \begingroup%
  \makeatletter%
  \providecommand\color[2][]{%
    \errmessage{(Inkscape) Color is used for the text in Inkscape, but the package 'color.sty' is not loaded}%
    \renewcommand\color[2][]{}%
  }%
  \providecommand\transparent[1]{%
    \errmessage{(Inkscape) Transparency is used (non-zero) for the text in Inkscape, but the package 'transparent.sty' is not loaded}%
    \renewcommand\transparent[1]{}%
  }%
  \providecommand\rotatebox[2]{#2}%
  \newcommand*\fsize{\dimexpr\f@size pt\relax}%
  \newcommand*\lineheight[1]{\fontsize{\fsize}{#1\fsize}\selectfont}%
  \ifx\svgwidth\undefined%
    \setlength{\unitlength}{425.15960934bp}%
    \ifx\svgscale\undefined%
      \relax%
    \else%
      \setlength{\unitlength}{\unitlength * \real{\svgscale}}%
    \fi%
  \else%
    \setlength{\unitlength}{\svgwidth}%
  \fi%
  \global\let\svgwidth\undefined%
  \global\let\svgscale\undefined%
  \makeatother%
  \begin{picture}(1,0.51709699)%
    \lineheight{1}%
    \setlength\tabcolsep{0pt}%
    \put(0,0){\includegraphics[width=\unitlength,page=1]{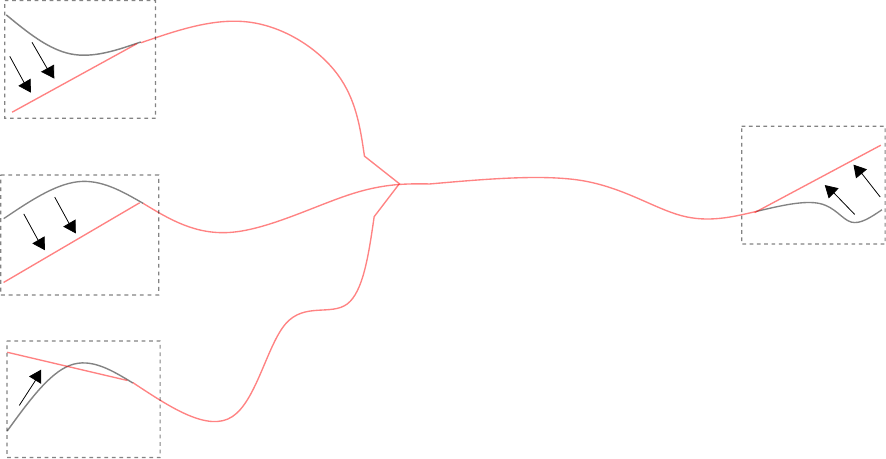}}%
  \end{picture}%
\endgroup%

    \caption{Microfibration} \label{redlift}
\end{figure}

The fiber above a point $(x,y)$ is the space of forests whose source and target are $x$ and $y$.
We can express it as the following fibered product:
\[P_x(\Conf(k,\mathbb{R}^\infty))\times_{\Conf(k,\mathbb{R}^\infty)} \E_\infty(T)\times_{\Conf(m,\mathbb{R}^\infty)}P^y(\Conf(m,\mathbb{R}^\infty)),\]
where $P_x$ and $P^y$ denote the Moore path spaces that respectively start at $x$ and end at $y$.
One way to see this is to think of a tree with fixed endpoints as a tree with no fixed endpoints, and then based paths of configurations to join leaves and roots to the fixed endpoints. 
The maps that induce the pullback $P_x(\Conf(k,\mathbb{R}^\infty))\rightarrow \Conf(k,\mathbb{R}^\infty)$ and $P^y(\Conf(m,\mathbb{R}^\infty))\rightarrow \Conf(m,\mathbb{R}^\infty)$ are target (resp. source) maps, so they are fibrations.
The pullback is therefore a homotopy pullback, and because $P_x\Conf(k,\mathbb{R}^\infty)$, $P^y(\Conf(m,\mathbb{R}^\infty))$, and $\E_\infty(T)$ are contractible, the homotopy pullback is contractible.

Each restriction $\Mor_T(\HT)\rightarrow\UConf(k,\mathbb{R}^\infty)\times\UConf(m,\mathbb{R}^\infty)$ is therefore a fibration, 
so $(d_0,d_1)$ is a fibration and $\HT$ is fibrant. Moreover, $f$ is 0-connected.
The category $\HT$ also has weak units: for any object $(x,t), x\in\UConf(n,\mathbb{R}^\infty)$, consider the constant path from $(x,t)$ to $(x,t+1)$.
This morphism induces an equivalence on morphism spaces into $(x,t)$ and $(x,t+1)$.
By proposition \ref{basechangeproperty}, there is an equivalence $\mathbb{B}\HT\simeq \mathbb{B}f^*\HT$.

Composing with the functor into $\s(\op_{1,n})$, we get a functor:
\begin{align*}
f^*\HT &\rightarrow \s(\op_{1,n}) &x\mapsto \vert x\vert, &&\phi\mapsto T_\phi.
\end{align*}
This functor is an equivalence on objects because the space of objects of $f^*\HT$, $\displaystyle \bigsqcup_k \Conf(k,\mathbb{R}^\infty)$, is equivalent to the space of objects of $\s(\op_{1,n})$, which is $\mathbb{N}$.
It is also an equivalence on morphisms because for a given $T$, the space of all $\phi$ such that $T_\phi = T$ is $\E_\infty(T)$, which is contractible by corollary \ref{treetree}.

At every level, the space $N_p(f^*\HT)$ is a pullback of the form $$\Mor(f^*\HT)\times_{\Ob(f^*\HT)}\dots \times_{\Ob(f^*\HT)}\Mor(f^*\HT),$$ which is also a homotopy pullback.
Therefore, $N_p(f^*\HT)\simeq N_p(\s(\op_{1,n}))$.
By lemma \ref{realizations}, this induces a weak equivalence on thick geometric realizations.
For the category $\s(\op_{1,n})$, the regular and thick realizations are equivalent.
Therefore, $\mathbb{B}F$ is a weak equivalence.
Moreover, it preserves the $E_\infty$-structure.
\end{proof}

\begin{rema}
    The groups associated to the category $\HT_2$ are related to the braided Higman--Thompson groups  \cite{brin,dehornoy,AC20, skipperwu}.
\end{rema}

\section{Delooping result}\label{deloopings}
In this section, we will set up what we need to zoom in on the classifying space of our category in order to find an expression of it as a loop space.
We would like to compare the space of embeddings of trees and a space of ``local'' images of trees via a scanning map.
We cannot directly show that this map is an equivalence, so we will zoom in on each dimension separately.
Zooming in on a chosen dimension means that we endow our space with a topology such that we are able to slide objects away to infinity along that direction.

\subsection{Semi-simplicial resolution of the classifying space of the category $\HT$}

\begin{defi}
Let $J$ be a subinterval of $\mathbb{R}$, $k,N$ two integers, $T$ a finite rooted $n$-ary forest.
Let $\F^k_N(T,J)$ be the space of embeddings of $T$ inside $J \times \mathbb{R}^k\times I^{N-k}$ defined in the same way as $\E_N(T)$ (\ref{embeddingspaces}) with one small modification:
every embedding is prolonged by constant paths so that its projection to $J$ is surjective.
The conditions (i) through (iv) in the definition of $\E_N(T)$ are also required.
When $J = \mathbb{R}$, we simply write $\F^k_N(T)$.
\end{defi}

\begin{defi}
    Let $k\geq 0$ and $M$ be a submanifold of $J\times\mathbb{R}^k\times I^{N-k}$ of the form $J'\times P'$, where $J'$ is a subinterval of $J$ and $P'$ is a submanifold of $\mathbb{R}^{k}\times I^{N-k}$. 
    Let 
    \[\Phi(J\times\mathbb{R}^k\times I^{N-k},M) := \bigg( \bigcup_T \displaystyle \F^k_{N}(T,J)\bigg)/ \sim,\]
    where $\phi\sim \psi$ if $\im(\phi)\cap M = \im(\psi)\cap M$.\\
    We will apply this to the case where $J' = J\cap [-\frac R2;\frac R2]$ and $P'$ is the cube $B_R^{k}(0)$ of center 0 and side $R$ in dimension $k$.
    For $0\leq k\leq N$, let us set $$\Phi_{k}^{N}(J) := \lim_{R\in\mathbb{N}} \Phi_k^N(J\times\mathbb{R}^{k}\times I^{N-k}, J'\times B_R^{k}(0) \times I^{N-k}).$$
\end{defi}
When $J=\mathbb{R}$, we omit it in the notation.
Note that for $k=0$, the space $\Phi_0^N$ identifies embeddings of trees whose images coincide along every compact subspace of the time dimension.

\begin{prop}\label{deloopinglevelone}
There is a weak equivalence of $E_N$-algebras: 
\[\Phi_0^N \simeq \mathbb{B}\HT_N.\]
\end{prop}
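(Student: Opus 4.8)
The plan is to interpose a semi-simplicial space $D_\bullet^N$ between $\Phi_0^N$ and $\mathbb{B}\HT_N = ||N_\bullet\HT_N||$, and to identify its thick realization with each side separately. Define $D_p^N$ to be the set of pairs $(\phi, a_0 < a_1 < \cdots < a_p)$ where $\phi$ is a point of $\Phi_0^N$ and $a_0,\dots,a_p$ are \emph{regular values} of $\tilde\phi$, i.e. none of the $a_i$ is a junction. Since a forest in $\bigcup_T\F^0_N(T)$ has finitely many internal vertices, the junction set $F_\phi\subset\mathbb{R}$ is finite and germ-invariant, so this condition is open; we topologize $D_p^N$ as a subspace of $\Phi_0^N\times\mathbb{R}^{p+1}$. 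Face maps delete one $a_i$; the augmentation $\varepsilon_p\colon D_p^N\to\Phi_0^N$ forgets all the $a_i$, and the ``cutting'' map $\kappa_p\colon D_p^N\to N_p\HT_N$ sends $(\phi, a_0<\cdots<a_p)$ to the chain of $p$ composable morphisms of $\HT_N$ whose $i$-th arrow is the restriction of $\phi$ over $[a_{i-1},a_i]$ (an element of $\E_N$, by conditions (i)--(iii) of Definition~\ref{embeddingspaces}), with source and target the slices $\tilde\phi^{-1}(a_{i-1})$ and $\tilde\phi^{-1}(a_i)$, which are genuine configurations because the $a_i$ are regular. Deleting $a_i$ composes the two adjacent arrows, so $\kappa_\bullet$ and $\varepsilon_\bullet$ are maps of semi-simplicial spaces.

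First I would show $||\varepsilon_\bullet||\colon ||D_\bullet^N||\to\Phi_0^N$ is a weak equivalence. Each $\varepsilon_p$ is the restriction of the projection $\Phi_0^N\times\mathbb{R}^{p+1}\to\Phi_0^N$ to an open set, hence a Serre microfibration, and one checks that $||\varepsilon_\bullet||$ is a Serre microfibration as well (lift a short path of forests while keeping the chosen regular values fixed; they stay regular since junctions move continuously). By the Weiss lemma~\cite{weisslemma} it then suffices that the fibre over $[\phi]$ is weakly contractible. This fibre is the thick realization of the semi-simplicial space $p\mapsto\{(a_0<\cdots<a_p)\in(\mathbb{R}\setminus F_\phi)^{p+1}\}$, which is the semi-simplicial nerve of the strict poset $(\mathbb{R}\setminus F_\phi,<)$. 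As $F_\phi$ is finite, $\mathbb{R}\setminus F_\phi$ is unbounded above and below, so this poset is directed and its thick nerve is contractible -- the usual ``complex of cuts is contractible'' argument, cf.~\cite{soren}.

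Next I would show $||\kappa_\bullet||$ is a weak equivalence by checking that each $\kappa_p$ is one and applying the first part of Proposition~\ref{levelwisereal} (with $n=\infty$). The fibre of $\kappa_p$ over a chain is the product of two spaces of ``caps'': germ classes of embedded forests over $(-\infty,a_0]$ extending the first slice, resp. over $[a_p,\infty)$ extending the last. Each cap space is contractible: one slides all the junctions of a cap off to $\mp\infty$, where they disappear in the inverse-limit topology of $\Phi_0^N$, deformation-retracting the cap onto the trivial one of straight vertical strands. It is exactly here that one uses the coarse topology on $\Phi_0^N$ rather than the contractibility of $\E_\infty$ of a fixed forest, which is why the statement holds for every finite $N$. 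One still must check that $\kappa_p$ is a quasi-fibration (or a microfibration with contractible fibres), so that this computes the homotopy fibre; granting this, $\kappa_p$ is a weak equivalence for all $p$, hence so is $||\kappa_\bullet||\colon||D_\bullet^N||\to ||N_\bullet\HT_N|| = \mathbb{B}\HT_N$, and together with the first step this gives $\Phi_0^N\simeq\mathbb{B}\HT_N$.

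The homotopy-theoretic inputs (the Weiss lemma, Proposition~\ref{levelwisereal}, contractibility of nerves of directed posets, contractibility of the cap spaces) are quoted or routine. The real work, and the main obstacle, is the point-set bookkeeping forced by the inverse-limit topology on $\Phi_0^N$: that ``regular value'' is an open, germ-invariant condition; that cutting and capping are continuous for this topology; that $D_\bullet^N$ is an honest semi-simplicial space and $||\varepsilon_\bullet||$ a Serre microfibration; and that the fibre of $\kappa_p$ is genuinely the product of cap spaces rather than merely receiving a map from it. These are all doable, but need care before the homotopy-theoretic machinery applies.
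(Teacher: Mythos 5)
Your proposal is correct and follows essentially the same route as the paper: an intermediate semi-simplicial space of embeddings decorated with cut times, an augmentation to $\Phi_0^N$ shown to be a microfibration with fibres the contractible nerve of a directed poset of regular values, and a cutting map to $N_\bullet\HT_N$ shown to be a levelwise equivalence because the two cap spaces contract by sliding everything off to infinity in the germ topology. The only cosmetic differences are that the paper imposes the open condition ``$t_i$ at distance $>\epsilon$ from a junction'' (using the $\epsilon$ carried by each embedding) and compares the cutting map fibrewise over $N_p\mathbb{R}$ rather than verifying directly that it is a quasi-fibration, which discharges the point-set caveats you flag at the end.
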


\begin{proof}
    Let us construct a semi-simplicial resolution $X^N_\bullet$ of $\Phi_0^N.$
    The space of $p$-simplices $X^N_p$ is given by the subspace of $\Phi_0^N\times N_p\mathbb{R}$ consisting of all $(\phi,t_0,\dots, t_p)$ where
    \begin{align*}
    \forall 0\leq i\leq p, \ t_i \text{ is at a distance larger than } \epsilon \text{ from a junction.}
    \end{align*}
    The face maps are given by:
    \begin{align*}
        d_k: X^N_p&\rightarrow X^N_{p-1}\\
        (\phi,t_0,\dots, t_p) &\mapsto (\phi,t_0,\dots, \hat{t_k},\dots, t_p).
    \end{align*}
  
    Let us denote by $\phi_{\vert[t_i,t_{i+1}]}$ the space $\im(\phi)\cap(I^N\times [t_i,t_{i+1}])$.
    Note that if $(\phi,\epsilon) \in \F^0_N(T)$, $t_i,t_{i+1} \in\mathbb{R}$, then there exists a $n$-ary subforest $T'$ of $T$ such that $(\phi_{\vert[t_i,t_{i+1}]},\epsilon) \in \E_N(T')$.
    There is a semi-simplicial map 
    \begin{align}\label{etap}
    \eta_p: X^N_p\rightarrow N_p\HT_N
    \end{align}
    which sends $(\phi,t_0,\dots, t_p)$ to $\left((t_0,t_1,\phi_{\vert[t_0,t_1]}),\dots,(t_{p-1},t_p,\phi_{\vert_{[t_{p-1},t_p]}})\right)$. 
    Informally, the map forgets what happens before $t_0$ and after $t_p$.
    
    The map $X^N_p\rightarrow N_{p}\mathbb{R}$ sending $(\phi, t_0,\dots, t_p)$ to $(t_0,\dots, t_p)$ is a fibration.
    The fiber over a point $(t_0,\dots, t_p)$ can be expressed as:
    \begin{align*}
    X'_p =\Phi_0^N((-\infty,t_0]\times I^N)&\times_{\UConf(n,I^N)}\dots\times_{\UConf(n,I^N)}\Phi_0^N([t_k,t_{k+1}]\times I^N)\\
    &\times_{\UConf(n,I^N)}\dots\times_{\UConf(n,I^N)}\Phi_0^N([t_p,\infty)\times I^N),
    \end{align*}
    because an embedding of a tree along $\mathbb{R}$ is obtained from a collection of embeddings into subintervals joined at their endpoints.
    The map $N_p\HT_N\rightarrow N_p\mathbb{R}$ is also a fibration. We then obtain the following diagram:
    \begin{center}
        \begin{tikzcd}
            X'_p \arrow{r}\arrow{d}&Y'_p\arrow{d}\\
            X_p\arrow{r}{\eta_p}\arrow{d} &N_p\HT_N\arrow{d}\\
            N_p\mathbb{R}\arrow{r}{=}&N_p\mathbb{R}.
        \end{tikzcd}
    \end{center}
    If we lift the map $\eta_p$ to the fibers above a point $(t_0,\dots, t_p)$, we get a map from $X'_p$ to the space
    \[Y'_p = \Phi_0^N([t_0,t_1])\times_{\UConf(n,I^N)}\dots\times_{\UConf(n,I^N)}\Phi_0^N([t_{p-1},t_{p}])\]
    which forgets the two outer terms of the product in the expression of $X'_p$. 
    We therefore need to prove that forgetting these terms does not change the homotopy type of the product.
    
    There are maps
    \begin{align*}
    p_0: \Phi_0^N((-\infty,t_0])&\rightarrow \UConf(n,I^N)\times\mathbb{R}\\
    p_p: \Phi_0^N([t_p,\infty))&\rightarrow \UConf(n,I^N)\times\mathbb{R}
    \end{align*}
    which are pullbacks of target (resp. source maps) and as such fibrations, so the pullback
    \[\Phi_0^N((-\infty,t_0])\times_{\UConf(n,I^N)} W \times_{\UConf(n,I^N)}\Phi_0^N([t_p,\infty))\]
    is a homotopy pullback.
    
    Let us show that the two outer terms in this homotopy pullback are equivalent to $\UConf(n,I^N)$
    and so removing them does not change the homotopy type.
    Let us consider the map $q_0:\UConf(n,I^N)\times\mathbb{R}\rightarrow \Phi_0^N((-\infty,t_0])$ induced by the maps $\UConf(n,I^N)\times\mathbb{R}\rightarrow \Phi((-\infty,t_0]\times I^N,[-R,R]\times I^N)$
    which send a configuration to the constant map.
    Clearly, $p_0\circ q_0 = \id$. The following is a homotopy between $q_0\circ p_0$ and $\id$:
    \begin{align*}
    H: \Phi_0^N((-\infty,t_0])\times I &\rightarrow \Phi_0^N((-\infty,t_0])\\
    (\phi,\epsilon,s) &\mapsto (\gamma_s,\epsilon)
    \end{align*}
    where for $s<1$, $\gamma_s$ is the embedding $\phi$ shifted by $\frac{-s}{1-s}$ along the time dimension, prolonged by a constant path. 
    Notice that this homotopy is only well-defined for $s\in[0,1)$; however, we can set $\gamma_1$ to be the constant path on the configuration of leaves, and this makes the homotopy well-defined and continuous. 
    See figure \ref{gammas} for an illustration of this homotopy.
    Therefore, the map $p_0$ is an equivalence.
    Similarly, the map $p_p$ is an equivalence.

    \begin{figure}[h!]
        \centering
            \def\svgwidth{0.8\hsize}
            \begingroup%
  \makeatletter%
  \providecommand\color[2][]{%
    \errmessage{(Inkscape) Color is used for the text in Inkscape, but the package 'color.sty' is not loaded}%
    \renewcommand\color[2][]{}%
  }%
  \providecommand\transparent[1]{%
    \errmessage{(Inkscape) Transparency is used (non-zero) for the text in Inkscape, but the package 'transparent.sty' is not loaded}%
    \renewcommand\transparent[1]{}%
  }%
  \providecommand\rotatebox[2]{#2}%
  \newcommand*\fsize{\dimexpr\f@size pt\relax}%
  \newcommand*\lineheight[1]{\fontsize{\fsize}{#1\fsize}\selectfont}%
  \ifx\svgwidth\undefined%
    \setlength{\unitlength}{429.62641678bp}%
    \ifx\svgscale\undefined%
      \relax%
    \else%
      \setlength{\unitlength}{\unitlength * \real{\svgscale}}%
    \fi%
  \else%
    \setlength{\unitlength}{\svgwidth}%
  \fi%
  \global\let\svgwidth\undefined%
  \global\let\svgscale\undefined%
  \makeatother%
  \begin{picture}(1,1.08344235)%
    \lineheight{1}%
    \setlength\tabcolsep{0pt}%
    \put(0,0){\includegraphics[width=\unitlength,page=1]{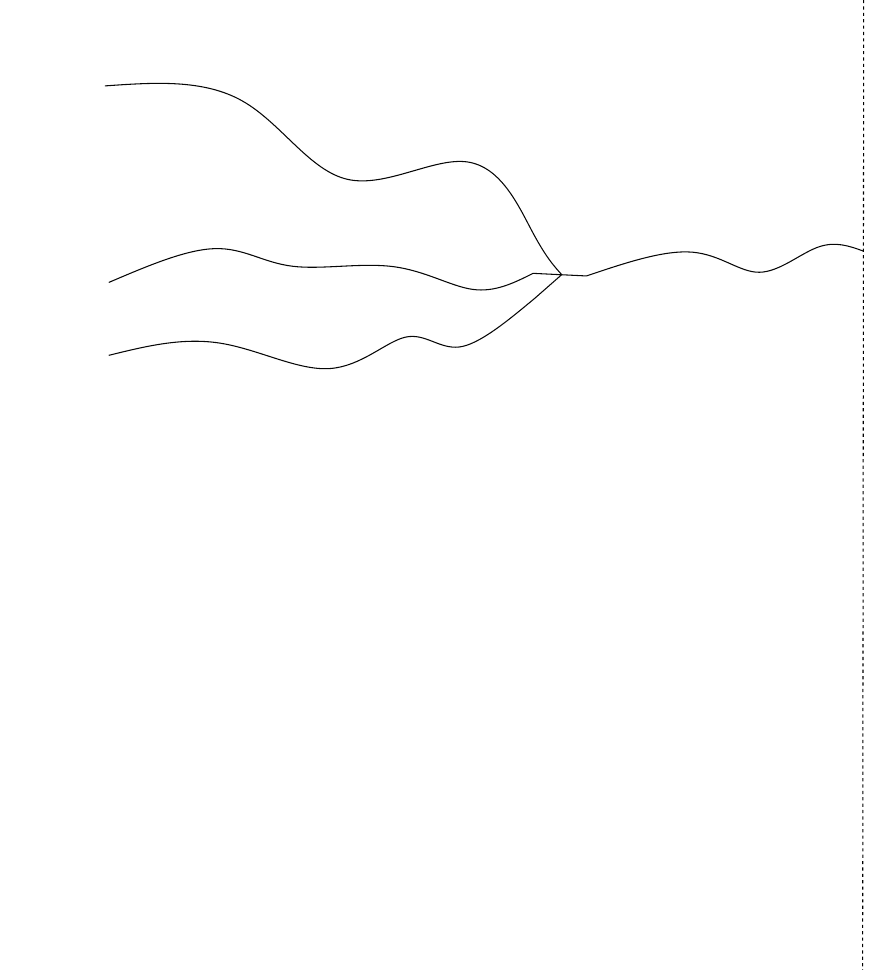}}%
    \put(0.91990227,1.05871889){\color[rgb]{0,0,0}\makebox(0,0)[lt]{\lineheight{1.25}\smash{\begin{tabular}[t]{l}$t_0$\end{tabular}}}}%
    \put(0.39635999,0.99327609){\color[rgb]{0,0,0}\makebox(0,0)[lt]{\lineheight{1.25}\smash{\begin{tabular}[t]{l}$\phi = \gamma_0$\end{tabular}}}}%
    \put(0,0){\includegraphics[width=\unitlength,page=2]{homotopie.pdf}}%
    \put(0.59123415,1.06162744){\color[rgb]{0,0,0}\makebox(0,0)[lt]{\lineheight{1.25}\smash{\begin{tabular}[t]{l}$t_0-\frac{s}{1-s}$\end{tabular}}}}%
    \put(0.3905429,0.47991381){\color[rgb]{0,0,0}\makebox(0,0)[lt]{\lineheight{1.25}\smash{\begin{tabular}[t]{l}$\gamma_s$\end{tabular}}}}%
    \put(0.3905429,0.17160569){\color[rgb]{0,0,0}\makebox(0,0)[lt]{\lineheight{1.25}\smash{\begin{tabular}[t]{l}$\gamma_1$\end{tabular}}}}%
    \put(0,0){\includegraphics[width=\unitlength,page=3]{homotopie.pdf}}%
    \put(-0.00211393,0.06544348){\color[rgb]{0,0,0}\makebox(0,0)[lt]{\lineheight{1.25}\smash{\begin{tabular}[t]{l}$s$\end{tabular}}}}%
  \end{picture}%
\endgroup%

            \caption{Illustration of the homotopy $H$ for $s=0$, $0<s<1$ and $s=1$.}\label{gammas}
        \end{figure}
    
    The map \eqref{etap} is an equivalence on fibers for every $p$, therefore it is a homotopy equivalence, and so the map $\eta$ induces an equivalence on semi-simplicial realizations:
    \[\vert\vert X^N_\bullet\vert\vert \simeq \vert\vert N_\bullet\HT_N\vert\vert.\]
        
    There is another map 
    \begin{align}
    \theta: \vert\vert X^N_\bullet\vert\vert\rightarrow \Phi_0^N 
    \end{align}
    which forgets the $(t_k)$. This map is a microfibration.
    Indeed, let us consider a square of the form
    \begin{center}
        \begin{tikzcd}[row sep = 0.4cm, column sep = 0.4cm]
        D^k\times\{0\}\arrow{dd}\arrow{dr} \arrow{rr}{A} &&\vert\vert X^N_\bullet\vert\vert\arrow{dd}{\theta}\\
        &D^k\times[0,\delta] \arrow[dashed]{ru}{H}\arrow{dl}\\
        D^k\times[0,1]\arrow{rr}{B} &&\Phi_0^N.
        \end{tikzcd}
    \end{center}
    We would like to find $\delta$ such that there exists a lift $H:D^k\times [0,\delta]\rightarrow \vert\vert X^N_\bullet\vert\vert$ in the above diagram (dashed arrow).
   Let $A:d\mapsto (x(d),\phi(d),t_0(d),\dots,t_p(d))$ and 
    $B:(d,s)\mapsto \psi(d,s)$.
    We want to construct $H: D^k\times[0;\delta]\rightarrow \vert\vert X_\bullet\vert\vert$ of the form $H(d,s) = (\chi(d,s),t_0(d,s),\dots,t_p(d,s))$ such that the $t_j(d,s)$ are far enough away from junctions for $\chi(d,s)$ (and $t_0(d,s)<\dots <t_p(d,s)$).
    We can take $\delta$ small enough so that for every $s\in[0,\delta]$, the $t_j(d)$ are still not closer than $\epsilon$ to a junction for $\psi(d,s)$, and then we can set
    $\chi(d,s)=\psi(d,s)$ and $t_j(d,s)= t_j(d)$.

   The fiber of $\theta$ over a point $(\phi,\epsilon)\in\Phi_0^N$ is the nerve of the topological poset $P\colonequals\{t\in\mathbb{R}, t \text{ is not within }\epsilon\text{ of a junction of }\phi.\}$.

   Let us consider the discrete version of this poset, $P^\delta$. There is an identity map $i: P^\delta\hookrightarrow P$.\\ 
    Let $D_{p,q} = N_{p+q+1}P$, viewed as a topological subspace of $N_pP\times N_qP^\delta$ via the map which takes a $(p+q+1)$-simplex in $NP$ and restricts it to its first $p$ coordinates on $NP$ and last $q$ coordinates on $NP^\delta$.
    
    This yields a semi-bisimplicial space $D$, with maps:
    \begin{align*}
        D_{p,q} \xrightarrow{\alpha_{p,q}} N_p P &&D_{p,q} \xrightarrow{\beta_{p,q}} N_q P^\delta.
    \end{align*}
    
     By proposition \ref{levelwisereal}, the map $\alpha_p: |\vert D_{p,\bullet}|\vert\rightarrow N_pP$ is a Serre microfibration. 
     The fiber of $\alpha_p$ over $(t_0,\dots, t_p) \in N_pP$ is the realization of the discrete simplicial space \[\{q\in\mathbb{N}, (s_0\leq\dots\leq s_q)\in P\backslash\{t_0,\dots, t_p\} \}.\]
    This is the nerve of a discrete poset which is $\mathbb{R}$ except for a finite union of closed intervals.  
     This poset is discrete and directed so its nerve is contractible. 
    Therefore the map $\alpha_p$ is a homotopy equivalence. 
    By lemma \ref{levelwisereal}, the map \[\vert\vert\alpha\vert\vert: ||D_{\bullet,\bullet}||\rightarrow ||N_\bullet P||\] is a homotopy equivalence. 
    But $\vert\vert i\vert\vert \circ ||\beta||\simeq ||\alpha||$ (\cite{grwI}, lemma 5.8), so $\vert\vert\alpha\vert\vert$ factors up to homotopy through the space $|\vert N_\bullet P^\delta|\vert$ (which is contractible since it is the realization of the nerve of a discrete directed poset). Thus, both spaces $||D_{\bullet,\bullet}||$ and $||N_\bullet P||$ are contractible.\\
    All that remains to show is that $P$ is a well-pointed category. This uses the topology of $P$ as a subposet of $\mathbb{R}$.
    We set 
    \begin{align*}
        u:\Mor(P)&\rightarrow I\times\Ob(P)\times\Ob(P) \\
        f_{x,y} &\mapsto (\min(1,y-x),x,y) \\
        f_{x,y} &\mapsto (1,x,y)
    \end{align*}
    and 
    \begin{align*}
        h:\Mor(P)\times I&\rightarrow \Mor(P)\\
        (f_{x,y},t)&\mapsto f_{x,tx+(1-t)y} \text{ if } [x,y] \subset P\\
        (f_{x,y},t)&\mapsto f_{x,y} \text{ else.}
    \end{align*}
    By proposition \ref{levelwisereal}, the thick geometric realization of $N_\bullet P$ is equivalent to its regular geometric realization. Therefore, $|N_\bullet P|$ is contractible. 
    
    It follows that the map $\theta$ is a homotopy equivalence, yielding the following zigzag of weak equivalences which also preserve the $E_N$-structure.
    \[\Phi_0^N\xleftarrow{\simeq}\vert\vert X^N_\bullet\vert\vert \xrightarrow{\simeq} \mathbb{B}\HT_N.\qedhere\]

\end{proof}

The inclusion $\mathbb{R}^N\hookrightarrow \mathbb{R}^{N+1}$ induces functors $\HT_N\rightarrow \HT_{N+1}$ and maps $\Phi_0^N\rightarrow \Phi_0^{N+1}$.
We would like these maps to be compatible with the zigzag exhibited above.
There are also induced maps $\vert\vert X^N_\bullet\vert\vert \rightarrow \vert\vert X^{N+1}_\bullet\vert\vert$.
\begin{lem}\label{compatibletowers}
    For every $N<\infty$, the following square commutes:
    \begin{center}
        \begin{tikzcd}
            \mathbb{B}\HT_N\arrow{d} &\vert\vert X^N_\bullet\vert\vert \arrow{l}\arrow{r}\arrow{d}&\Phi_0^N\arrow{d}\\
            \mathbb{B}\HT_{N+1} &\vert\vert X^{N+1}_\bullet\vert\vert \arrow{l}\arrow{r}&\Phi_0^{N+1}.
        \end{tikzcd}
        \end{center}
    \end{lem}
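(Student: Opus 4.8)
The plan is to show that the two constituent squares of the diagram commute already at the level of the maps constructed in the proof of Proposition~\ref{deloopinglevelone}, so that commutativity of the realized diagram follows with no homotopy-coherence argument. The starting observation is that all three vertical maps arise from one and the same operation: the fixed inclusion $\iota\colon I^{N}\hookrightarrow I^{N+1}$ induces $\id_{\mathbb{R}}\times\iota\colon\mathbb{R}\times I^{N}\hookrightarrow\mathbb{R}\times I^{N+1}$ together with the evident map on configurations $\UConf(n,I^{N})\to\UConf(n,I^{N+1})$, and the functor $\HT_{N}\to\HT_{N+1}$, the map $\Phi_0^{N}\to\Phi_0^{N+1}$, and the level-wise inclusion $X^{N}_{p}\hookrightarrow X^{N+1}_{p}$ are all given by post-composing the relevant embedding of a forest with $\id_{\mathbb{R}}\times\iota$ while leaving the data $\epsilon$ and the time coordinates $(t_k)$ untouched. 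That this is well defined uses that the junctions of $\phi$ and of $(\id_{\mathbb{R}}\times\iota)\circ\phi$ coincide (the time projection $\tilde\phi$ is unchanged) and that $\epsilon$ is unchanged, so the defining inequalities of $X^{N}_{p}$ are preserved; the one point requiring a small check is that condition (iii) of Definition~\ref{embeddingspaces} persists, which holds because the standard corolla embeddings $\phi_\alpha(\epsilon)$ in $I^{N+1}$ and $I^{N+2}$ are chosen compatibly with $\iota$ in Section~\ref{embeddings}.

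For the right-hand square, recall that $\theta$ sends the class of a point $(\phi,t_0,\dots,t_p;u)$ of $\vert\vert X^{N}_\bullet\vert\vert$ (with $u$ a point of the $p$-simplex) to the class of $\phi$ in $\Phi_0^{N}$, and is well defined because the gluing relations only involve face maps, which do not alter $\phi$. Since the vertical stabilization maps act only on the embedding component, via post-composition with $\id_{\mathbb{R}}\times\iota$, and not on the $t_k$ or on $u$, both composites send $(\phi,t_0,\dots,t_p;u)$ to the class of $(\id_{\mathbb{R}}\times\iota)\circ\phi$ in $\Phi_0^{N+1}$; along the way one notes that this post-composition descends to the equivalence relation defining $\Phi_0^{N}$ (two embeddings agreeing on a compact time window still agree there after applying $\id_{\mathbb{R}}\times\iota$) and is compatible with the limit over $R$. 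Hence the right square commutes strictly.

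For the left-hand square, recall that $\vert\vert\eta\vert\vert$ is the thick realization of the semi-simplicial map $\eta_\bullet$ of the proof of Proposition~\ref{deloopinglevelone}, with $\eta_p(\phi,t_0,\dots,t_p)$ the chain of morphisms whose $i$-th term is determined by $\phi_{\vert[t_{i-1},t_i]}=\im(\phi)\cap(I^{N}\times[t_{i-1},t_i])$. The key point is that restriction of an embedding to a time window commutes with post-composition by $\id_{\mathbb{R}}\times\iota$, since the latter preserves the time coordinate:
\[
\bigl((\id_{\mathbb{R}}\times\iota)\circ\phi\bigr)_{\vert[t_{i-1},t_i]}=(\id_{\mathbb{R}}\times\iota)\bigl(\phi_{\vert[t_{i-1},t_i]}\bigr),
\]
and likewise the source and target configurations of each term are simply carried along by the map on configurations. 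Therefore $\eta^{N+1}_p$ followed by the level-wise stabilization agrees, as a map $X^{N}_p\to N_p\HT_{N+1}$, with the nerve of the functor $\HT_{N}\to\HT_{N+1}$ applied after $\eta^{N}_p$. Passing to thick realizations gives commutativity of the left square.

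Both squares commuting, the combined diagram commutes, which is the assertion of Lemma~\ref{compatibletowers}. I do not anticipate a genuine obstacle: the whole content is the naturality in $N$ of the maps $\eta_\bullet$ and $\theta$ with respect to enlarging the ambient cube, and the only delicate point, as noted, is confirming that the stabilization maps on embedding spaces are literally ``post-compose with $\id_{\mathbb{R}}\times\iota$, retaining $\epsilon$ and the $t_k$'', which in turn rests on the compatibility of the fixed standard corolla embeddings across dimensions.
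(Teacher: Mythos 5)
Your proof is correct and takes essentially the same route as the paper, whose entire argument is the one-line observation that all three vertical maps are induced by the inclusion $\mathbb{R}^N\hookrightarrow\mathbb{R}^{N+1}$; you have simply unpacked the resulting strict naturality of $\eta_\bullet$ and $\theta$ in detail. The one point you flag — that the standard corolla embeddings $\phi_\alpha(\epsilon)$ must be chosen compatibly across ambient dimensions — is a genuine (if minor) implicit assumption that the paper also relies on without comment.
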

    
    \begin{proof}
    All the vertical maps are induced by the inclusion $\mathbb{R}^N\hookrightarrow \mathbb{R}^{N+1}$ so this follows.
    \end{proof}

\subsection{Delooping in higher dimensions}

We now carry out the zooming process in every dimension.
The space $\Phi_0^N$ is not path-connected, so we will need to consider its group completion. 
We therefore have the following proposition:
\begin{prop}\label{deloopingleveltwo}
    There is an equivalence $B \Phi_0^N \simeq \Phi_1^N$.
\end{prop}

\begin{proof}
We consider the monoid $\tilde{\Phi}_0^N$ whose elements are tuples $(\phi,a,b)$, with $a,b\in\mathbb{R}$ 
and $\phi$ a map from $\mathbb{R}$ to $\mathbb{R}\times [a,b]\times I^{N-1}$, 
which is the image by an affine map of an element in $\Phi_0^N$.
The monoid operation is obtained by stacking.
This monoid is equivalent to $\Phi_0^N$ as an $E_N$-algebra.

Let us consider the group completion $\Omega B \tilde{\Phi}_0^N$ for the stacking operation.
Let us show that $B \tilde{\Phi}_0^N \simeq \Phi_1^N$.

We use the same method as in the proof of proposition \ref{deloopinglevelone}, so we will go into less detail in this proof.
We define a semi-simplicial space $Z_\bullet$. 
The space of $p$-simplices $Z_p$ is the subspace of $\Phi_{1}^N\times N_p\mathbb{R}$ containing all $(\phi,t_0,\dots, t_p)$ 
such that for every $0\leq k \leq p$, $\phi$ does not intersect the hyperplanes $\mathbb{R}^{1}\times \{t_k\}\times I^{N-1}$.
The face maps are given by forgetting one of the $t_i$.

There is a semisimplicial map $\eta: Z_\bullet\rightarrow B_\bullet \tilde{\Phi}_0^N$:
on $p$-simplices, this map sends $(\phi,t_0,\dots, t_p)$ to the $p$-simplices of the nerve of the form 
$\left( (\phi_1, t_0,t_1),\dots, (\phi_p, t_{p-1}, t_p)\right)$,
where $\phi_i$ is the connected components of $\phi$ which are strictly contained in $\mathbb{R}\times [t_{i-1}, t_i]\times I^{N-1}$.
Basically, the map $\eta_p$ forgets what happens before $t_0$ and after $t_p$.

There is also an inclusion $i_p: B_p \tilde{\Phi}_0^N \rightarrow Z_p$.
The map $\eta_p\circ i_p$ is the identity, and the map $i_p\circ \eta_p$ is homotopic to the identity via the homotopy 
which slides away linearly the components of $\phi$ outside of $\mathbb{R}\times [t_{0}, t_p]\times I^{N-1}$ to the point at infinity of $Z_p$.
Therefore, the map $\eta_p$ is a homotopy equivalence for all $p$, and so the map $\eta$ induces an equivalence on thick realizations:
\[\vert\vert Z_\bullet\vert\vert \simeq  \vert\vert B_\bullet \tilde{\Phi}_0^N\vert\vert\]

The thick realization of the monoid is equivalent to the ordinary geometric realization, so:
\[\vert\vert Z_\bullet\vert\vert \simeq B\tilde{\Phi}_0^N.\]

Consider the projection map $\epsilon: \vert\vert Z_\bullet\vert\vert \rightarrow \Phi_1^N$,
which at each level sends $(\phi,t_0,\dots, t_p)$ to $\phi$. 
This map is a microfibration as in the proof of proposition \ref{deloopinglevelone}.

The fiber over a point $\phi\in\Phi_1^N$ is the realization of the nerve of the poset $P:= \{t\in\mathbb{R}\vert \phi\cap\left(\mathbb{R}^{1}\times \{t_k\}\times I^{N-1}\right) = \emptyset\}$,
which is contractible as in the proof of proposition \ref{deloopinglevelone}.
The microfibration $\epsilon$ has contractible fibers so it is a homotopy equivalence, yielding the following zigzag:
\[\Phi_1^N \xleftarrow{\simeq}\vert\vert Z_\bullet\vert\vert \xrightarrow{\simeq} B\tilde{\Phi}_0^N.\]
\end{proof}

\begin{prop}
For all $N$ and $1\leq k \leq N-1$, there is a weak equivalence:
\begin{align}\label{delooping}
\Phi_{k}^N \simeq \Omega \Phi_{k+1}^N,
\end{align}
\end{prop}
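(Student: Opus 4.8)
The plan is to mimic the semi-simplicial resolution argument of Proposition~\ref{deloopinglevelone}, now applied to the one extra Euclidean coordinate that distinguishes $\Phi_k^N$ from $\Phi_{k-1}^N$. Writing the ambient space of $\Phi_k^N$ as $\mathbb{R}_t\times\mathbb{R}^{k-1}\times\mathbb{R}_x\times I^{N-k}$, where $x$ is exactly the coordinate that is the bounded interval $I$ in the ambient space of $\Phi_{k-1}^N$, I would resolve $\Phi_k^N$ by scanning along $x$. Define a semi-simplicial space $\Z_\bullet^N$ whose $p$-simplices are tuples $(\phi,a_0<\dots<a_p)$ where $\phi$ represents a class in $\Phi_k^N$, each hyperplane $\{x=a_i\}$ is transverse to the embedded forest and disjoint from every vertex cube, $\phi$ is cylindrical in the $x$-direction on a neighbourhood of each $\{x=a_i\}$, and the image of $\phi$ is contained in the slab $\{a_0\le x\le a_p\}$; the face maps delete one of the $a_i$. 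These side conditions are the analogues, in the $x$-direction, of the condition ``at distance larger than $\epsilon$ from a junction'' in the proof of Proposition~\ref{deloopinglevelone}.

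First I would show $\vert\vert\Z_\bullet^N\vert\vert\simeq\Phi_k^N$. The map forgetting the $a_i$ is a Serre microfibration by the same explicit small-$\delta$ lifting argument used for the map $\theta$ there, since transversality and disjointness from the vertex cubes are open conditions that persist along a short homotopy; its fibre over $\phi$ is the thick realization of the nerve of the topological poset of admissible values of $x$, which is $\mathbb{R}$ with finitely many closed intervals removed, hence directed, and the passage from ``directed'' to ``contractible realization'' in the topological setting is handled by the same well-pointedness check and bi-semi-simplicial comparison with the discrete poset (via Proposition~\ref{levelwisereal} and Lemma~\ref{realizations}) as before. Next, the Segal maps $\Z_p^N\to\Z_1^N\times_{\Z_0^N}\dots\times_{\Z_0^N}\Z_1^N$ are weak equivalences: an embedding that is $x$-cylindrical near each $\{x=a_i\}$ is reconstructed from its restrictions to the slabs $\{a_i\le x\le a_{i+1}\}$ glued along the slice configurations $\{x=a_i\}$, and since restriction to a slice is a pullback of source and target maps, hence a fibration, each gluing pullback is a homotopy pullback.

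It then remains to identify $\Z_1^N$ with $\Phi_{k-1}^N$ and conclude. A $1$-simplex $(\phi,a_0<a_1)$ is, up to the contractible choice of the pair $(a_0,a_1)$, determined by the restriction of $\phi$ to the slab $\{a_0\le x\le a_1\}$, outside which $\phi$ is empty; rescaling $[a_0,a_1]$ to the fixed interval $I$ and deforming away the cylindrical collars identifies $\Z_1^N$ with $\Phi_{k-1}^N$. This space is path-connected, since any forest-embedding can be slid along $x$ out of the slab to the empty configuration, which survives in the $R\to\infty$ limit topology; Lemma~\ref{semiSegal} then applies and gives $\Phi_{k-1}^N\simeq\Z_1^N\simeq\Omega\vert\vert\Z_\bullet^N\vert\vert\simeq\Omega\Phi_k^N$. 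One should also record that these equivalences are compatible with the maps induced by $\mathbb{R}^N\hookrightarrow\mathbb{R}^{N+1}$, as in Lemma~\ref{compatibletowers}, so that the deloopings assemble as $N\to\infty$.

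The step I expect to be the main obstacle is the first one: faithfully reproducing the microfibration-with-contractible-fibre analysis of Proposition~\ref{deloopinglevelone} in this new scanning direction — in particular the well-pointedness of the poset of admissible values and the comparison between its topological and discrete versions, which is where most of the technical weight of the analogous proof lay — together with the bookkeeping needed in the third step to turn the slab description of $\Z_1^N$ into an honest identification with $\Phi_{k-1}^N$, rather than with a merely relative or collared variant of it.
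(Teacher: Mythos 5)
Your overall strategy is the paper's: build a semi-simplicial Segal space over $\Phi_k^N$ by recording walls perpendicular to the newly unbounded coordinate, show the augmentation is a microfibration with contractible fibers given by the nerve of a poset of admissible wall positions, identify the space of $1$-simplices with $\Phi_{k-1}^N$, and invoke Lemma~\ref{semiSegal}. But the wall condition you impose is the wrong one, and this is not cosmetic. The paper requires each hyperplane $\{x=t_i\}$ to be \emph{disjoint} from $\im(\phi)$; since the embeddings in $\F^k_N(T,J)$ are prolonged only along the time direction $J$, the $x$-projection of the image is a finite union of closed intervals, so such walls exist and the poset of admissible walls is $\mathbb{R}$ minus finitely many closed intervals. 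With disjoint walls, the slab between $t_0$ and $t_1$ contains whole connected components of the embedded forest, so restriction to the slab and rescaling lands honestly in $\Phi_{k-1}^N$. Your transversality-plus-cylindrical-collar condition instead chops strands in a spatial direction; the resulting slab restrictions are not embeddings of forests in the sense of Definition~\ref{embeddingspaces} (roots and leaves live at fixed \emph{time} levels, not at spatial boundary walls), so your $\Z_1^N$ does not map to $\Phi_{k-1}^N$ without first constructing and comparing a relative/collared variant of all the embedding spaces --- exactly the obstacle you flag at the end, and which the paper's disjointness condition makes vanish.

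A second, independent problem is your requirement that $\im(\phi)\subset\{a_0\le x\le a_p\}$: the outer face maps $d_0$ and $d_p$ shrink the slab, so this condition is not preserved and the semi-simplicial structure is not well defined (and for $p=0$ it degenerates the space of $0$-simplices entirely). The paper imposes no containment; instead, the deformation retraction of $X_1$ onto $\Phi_{k-1}^N$ uses the quotient topology of $\Phi_k^N$ to push whatever lies outside $[t_0,t_1]$ off to infinity. With these two corrections --- disjoint walls, no containment --- your argument becomes the paper's, including the bi-semi-simplicial comparison of the topological poset of walls with its discrete version that you correctly anticipate as the technical core.
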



\begin{proof}
    We are going to use lemma \ref{semiSegal}.
    Let us construct a semi-simplicial space $X_\bullet$ such that $X_1\simeq \Phi^N_{k-1}$ and $\vert\vert X_\bullet\vert\vert \simeq \Phi_k^N$.
    
    Let $X_\bullet$ be the semi-simplicial space whose space of $p$-simplices is the subspace of $\Phi_k^N\times N_p\mathbb{R}$ of all $(\phi,t_0,\dots, t_p)$ where:
    \begin{align*} 
        &t_0<\dots <t_p\\
        &\phi \text{ does not intersect the space } \mathbb{R}^k\times \{t_k\}\times I^{N-k}, \text{ for every } 0\leq k \leq p.
    \end{align*}
    Face maps are given by forgetting one of the $t_i$.
    The Segal maps
    \begin{align*}
        X_n &\rightarrow X_1\times_{X_0}\dots\times_{X_0} X_1\\
        (\phi,t_0,\dots,t_n) &\mapsto ((\phi,t_0,t_1),\dots,(\phi,t_{n-1},t_n))
    \end{align*}
    are equivalences, so $X_\bullet$ is a semi-simplicial Segal space.

    We claim that $\Phi_{k-1}^N$ is a deformation retract of $X_1$.
    Let us define a map $l:X_1\rightarrow \Phi^N_{k-1}$ which restricts $(\phi,t_0,t_1)$ to $\phi\cap (\mathbb{R}^{k}\times (t_0,t_1) \times I^{N-k})$ and then rescales it linearly to an element of $\Phi^N_{k-1}$.
    We also consider the map $m:\Phi^N_{k-1}\rightarrow X_1$ which sends $\phi$ to $(\phi,0,1)$.
    There is a homotopy between $m\circ l$ and the identity of $X_1$. This homotopy is obtained by pushing off whatever is outside of $[t_0,t_1]$ to infinity and simultaneously rescaling $[t_0,t_1]$ to $[0,1]$.
    Moreover, $l\circ m = \id$.
    
    There is a map
    \begin{align}\label{fib}
        \epsilon: \vert\vert X_\bullet\vert\vert\rightarrow \Phi_k^N
    \end{align}
    which forgets the $(t_k)$. It is a microfibration as in the proof of proposition \ref{deloopinglevelone}.

    Let us show that the fibers of this map are weakly contractible. Given a point $\phi \in \Phi_k^N$, the fiber of the map $\epsilon$ over $\phi$ can be seen as the nerve of the poset 
    $$P \colonequals \{t\in \mathbb{R}\mid \phi \text{ does not intersect the space }\mathbb{R}^k\times \{t\}\times I^{N-k}\}$$ equipped with the usual ordering on $\mathbb{R}$.
    As in the proof of \ref{deloopinglevelone}, we show that the nerve of $P$ is contractible.
    The map \ref{fib} is a microfibration with contractible fibers, so it is a homotopy equivalence.
    By lemma \ref{semiSegal}, $\Phi(\mathbb{R}^{k}\times I^{N+1-k})\simeq \Omega\Phi_k^N$.
    \end{proof}

\begin{cor}\label{loopdeloop}
  There is a weak equivalence 
  \[\Omega B \Phi_0^N \simeq \Omega^N \Phi_N^N\]
\end{cor}

\section{Homology of the space of local images}\label{zoom}
We would like to give a more in-depth description of the space $\Phi_N^N$; roughly, it is a kind of compactification.
The point at infinity can be seen as a ``black hole'' which swallows everything which reaches it.
With this in mind, let us introduce the following definitions.

\begin{defi}
Let $U_1$ be the subset of $\Phi_N^N$ which contains those $(\phi,\epsilon)$ such that there exists $r>0$ that satisfies the following conditions: 
$\im(\phi)\cap B(0,r)$ contains no internal vertices and at most one path.

Let $U_n$ be the subset of $\Phi_N^N$ which contains those $(\phi,\epsilon)$ such that there exists $r>0$ that satisfies the following conditions: 
$\im(\phi)\cap B(0,r)$ is an affine tree which has exactly one internal vertex.
\end{defi}

We denote by $U_{1n}$ the intersection $U_n\cap U_1$.

\begin{lem}\label{openpushout}
    The following square is homotopy cocartesian.
    \begin{center}
    \begin{tikzcd}
    U_{1n}\arrow{r}\arrow{d} &U_1\arrow{d}\\
    U_n\arrow{r} &\Phi_N^N.
    \end{tikzcd}
    \end{center}
\end{lem}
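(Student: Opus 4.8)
The plan is to recognise the square in the statement as the one attached to the open cover $\{U_1,U_n\}$ of $\Phi_N^N$, and then to invoke the standard fact that an open cover exhibits a space as the homotopy pushout of its pieces along their intersection. Concretely, three things need checking: that $U_1$ and $U_n$ are open in $\Phi_N^N$; that $U_1\cup U_n=\Phi_N^N$; and that $U_1\cap U_n=U_{1n}$, which holds by the very definition of $U_{1n}$.

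First I would verify openness of $U_1$ and $U_n$. Both are defined by the existence of a radius $r>0$ for which $\im(\phi)\cap B(0,r)$ has a prescribed simple shape — at most one arc and no internal vertex for $U_1$, an affine tree with exactly one internal vertex for $U_n$. Such a shape persists, for a slightly adjusted radius, under a small perturbation of $(\phi,\epsilon)$, because the embedding is affine near each internal vertex inside a vertex cube of the fixed size $\epsilon$ of condition~(iii) of Definition~\ref{embeddingspaces}. One makes this precise by descending to one of the approximating spaces of the inverse limit $\Phi_N^N=\lim_R\Phi_N^N\bigl(\mathbb{R}\times\mathbb{R}^N,[-R/2,R/2]\times B^N_R(0)\bigr)$ with $R$ large enough that $B(0,r)$ is captured; this step is routine but needs some care with the limit.

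Next I would check that $\{U_1,U_n\}$ covers $\Phi_N^N$. Fix $(\phi,\epsilon)$ and look at $\im(\phi)\cap B(0,r)$ for small $r$. If $0$ is the image of an internal vertex $v$, condition~(iii) says the image inside the vertex cube around $0$ is exactly the embedded $n$-ary corolla at $v$, so for $r$ small enough this intersection is an affine tree with one internal vertex and $(\phi,\epsilon)\in U_n$. Otherwise $0$ is not the image of an internal vertex; since there are only finitely many internal vertices, since away from their vertex cubes the image is a disjoint union of arcs and prolongation rays, and since inside a vertex cube not centred at $0$ the strands of the corolla are pairwise disjoint away from the centre, a small enough ball meets $\im(\phi)$ in the empty set or in a single arc, so $(\phi,\epsilon)\in U_1$. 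An element of $\Phi_N^N$ agrees near $0$ with a genuine element of some $\F^N_N(T)$, so this dichotomy holds throughout $\Phi_N^N$, and $\{U_1,U_n\}$ is an open cover with intersection $U_{1n}$.

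With the open cover established, the conclusion is formal: for any space $X=A\cup B$ with $A$ and $B$ open, the square with corners $A\cap B,\ A,\ B,\ X$ is homotopy cocartesian, since $\Sing(A)\cup_{\Sing(A\cap B)}\Sing(B)$ is a homotopy pushout of simplicial sets (the left-hand leg being a monomorphism) and equals the subcomplex of $\{A,B\}$-small singular simplices, whose inclusion into $\Sing(X)$ is a weak equivalence by the subdivision theorem; applying this with $A=U_1$ and $B=U_n$ gives the claim. The main obstacle is the bookkeeping in the first two steps — in particular controlling the local structure near the origin of elements of $\Phi_N^N$ through the inverse limit that defines it, and verifying openness carefully — rather than this final, standard homotopy-theoretic input.
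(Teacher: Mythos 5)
Your proposal is correct and follows essentially the same route as the paper: both arguments reduce the claim to showing that $U_1$ and $U_n$ form an open cover of $\Phi_N^N$ with intersection $U_{1n}$, and both establish the covering by a case analysis near the origin (internal vertex at the centre versus at most one strand), the paper taking openness and the open-cover-implies-homotopy-pushout principle for granted where you spell them out. No gaps.
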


\begin{proof}
Because $U_1$ and $U_n$ are open sets, it suffices to prove that $\Phi_N^N = U_1\cup U_n$.
For every $\phi\in\Phi_N^N$, set $x$ a point of $\im(\phi)$ that minimizes the distance to 0. 
Denote by $r$ this minimal distance.
If $r>0$, then $B(0,\frac{r}{2})\cap\im(\phi)$ is empty, and $\phi\in U_1$.
If $r=0$, then if $x$ is the image of an internal vertex, there exists, by definition of $\Phi_N^N$, $\epsilon>0$ such that $\phi$ coincides on a cube of side $2\epsilon$ with the embedding of a standard junction, so $\phi\in U_n$.
If $r=0$ but $x$ is not the image of an internal vertex, then there exists $r'$ such that $\im(\phi)\cap B(0,r')$ contains a unique strand so $\phi\in U_1$.
Therefore, $\Phi_N^N$ is the union of $U_1$ and $U_n$, so it is equivalent to their homotopy pushout over their intersection $U_{1n}$.
\end{proof}

\begin{defi}
Let $U'_1$ be the space of triples $(\phi, \epsilon, r)$ where $(\phi,\epsilon) \in U_1$ and $r$ is such that $\im(\phi)\cap B(0,r)$ has no internal vertices and at most one path closest to the origin.\\
Let $U'_n$ be the space of triples $(\phi, \epsilon, r)$ where $\im(\phi)\cap B(0,r)$ is an affine tree which has exactly one internal vertex.
\end{defi}

\begin{lem}\label{projectionprime}
The projection maps $U'_1\rightarrow U_1$ and $U'_n\rightarrow U_n$ are weak equivalences.
\end{lem}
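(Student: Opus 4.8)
The plan is to show that each of the two forgetful projections is a Serre microfibration with weakly contractible fibres, and then to conclude with \cite[Lemma~2.2]{weisslemma}.

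Consider first the map $p\colon U'_1\to U_1$. It is the restriction of the projection $\Phi_N^N\times\mathbb{R}_{>0}\to\Phi_N^N$, so it is continuous, and it is surjective because $U_1$ is by definition the set of those $(\phi,\epsilon)$ for which at least one admissible radius exists. The fibre of $p$ over $(\phi,\epsilon)$ is
\[
R_{\phi}=\{\,r>0\mid \im(\phi)\cap B(0,r)\text{ has no internal vertex and at most one path closest to the origin}\,\}.
\]
I would first observe that $R_{\phi}$ is a nonempty interval with left endpoint $0$: it is nonempty precisely because $(\phi,\epsilon)\in U_1$, and if $r\in R_{\phi}$ and $0<r'<r$ then $\im(\phi)\cap B(0,r')\subset\im(\phi)\cap B(0,r)$, so the defining condition is inherited and $r'\in R_{\phi}$. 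Hence $R_{\phi}$ is contractible. For $p'\colon U'_n\to U_n$ the fibre over $(\phi,\epsilon)$ is the set of $r>0$ such that $\im(\phi)\cap B(0,r)$ is an affine tree with exactly one internal vertex; the same monotonicity argument — one may shrink $r$ as long as the ball still contains the (unique) vertex and stays inside the affine region around it — shows this fibre is a nonempty interval, hence contractible.

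For the microfibration property I would argue as in the proof of Proposition \ref{deloopinglevelone}. Given a lifting square with top map $d\mapsto(\phi_d,\epsilon_d,r_d)$ on $D^k\times\{0\}$ and bottom map $(d,s)\mapsto(\psi_{d,s},\eta_{d,s})$ on $D^k\times[0,1]$ restricting to $(\phi_d,\epsilon_d)$ at $s=0$, I keep the radii fixed and set $H(d,s)=(\psi_{d,s},\eta_{d,s},r_d)$. This agrees with the given map at $s=0$ and covers the bottom map, so all that is needed is a $\delta>0$, independent of $d$, with $H(d,s)\in U'_1$ for $s\le\delta$. For each $d$ the radius $r_d$ lies in the interior of $R_{\phi_d}$, hence stays admissible for all embeddings sufficiently close to $\phi_d$; compactness of $D^k$ together with continuity of the bottom map then yields a uniform $\delta$. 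The same works verbatim for $p'$. Applying \cite[Lemma~2.2]{weisslemma} then gives that $p$ and $p'$ are weak equivalences.

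The delicate point, and the one I expect to be the main obstacle, is precisely this last step: one must check that ``being an admissible radius'' is stable under small perturbations of the embedding, which requires a little care about internal vertices or strands lying exactly on the sphere $\partial B(0,r)$. This is harmless once one insists that the radius be taken strictly below the first obstruction — so that the conditions cutting out $U'_1$ and $U'_n$ are genuinely open and $R_{\phi}$ (and its analogue for $U_n$) does not attain its supremum — but it is the only place where the topology of $\Phi_N^N$, rather than mere set-theoretic bookkeeping, enters the argument.
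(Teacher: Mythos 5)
Your overall strategy --- exhibit each projection as a Serre microfibration with fibres that are intervals of radii, then invoke \cite[Lemma~2.2]{weisslemma} --- is exactly the one the paper uses, and your analysis of the fibres (downward closure for $U'_1$, convexity of the admissible window for $U'_n$) matches the paper's.

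The gap is in the microfibration step, and it is precisely the point you flag at the end. You lift by holding the radius constant, $H(d,s)=(\psi_{d,s},\eta_{d,s},r_d)$, and justify this by asserting that ``$r_d$ lies in the interior of $R_{\phi_d}$.'' Nothing in the definition of $U'_1$ (or $U'_n$) guarantees this: the definition only asks that $r$ be admissible, so $r_d$ may well be the supremum of the admissible interval, with an internal vertex or a second strand sitting exactly on $\partial B(0,r_d)$. In that case an arbitrarily small perturbation pushes the obstruction into the ball, no uniform $\delta$ exists, and the constant-radius lift fails. Your proposed remedy --- ``insist that the radius be taken strictly below the first obstruction'' --- changes the definition of $U'_1$ and $U'_n$ rather than proving the lemma for the spaces as defined (one would then owe a separate argument that the modified spaces are equivalent to the original ones). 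The paper's fix is instead to let the radius vary with the deformation parameter: the lift has the form $H(d,s)=(\psi_{d,s},\eta_{d,s},r'(d,s))$ with $r'(d,0)=r_d$, shrinking $r'$ continuously so that the ball keeps avoiding vertices and extra strands; the argument splits into the case where $\im(\phi(d))\cap B(0,r_d)$ is empty and the case where it contains a single strand. Note also that for $U'_n$ shrinking is not automatically admissible (the ball must keep containing the unique internal vertex), so the varying radius must be kept inside the window between the vertex and the outer obstruction; this is why the paper treats that case as ``almost,'' not exactly, the same.
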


\begin{proof}
Let us prove that the map $U'_1\rightarrow U_1$ is a microfibration. Consider given the outer square:
\begin{center}
    \begin{tikzcd}[column sep = 0.4cm, row sep = 0.4cm]
    D^k\times\{0\}\arrow{dd}\arrow{dr} \arrow{rr}{A} &&U'_1\arrow{dd}{p}\\
    &D^k\times[0,\delta] \arrow[dashed]{ru}{H}\arrow{dl}\\
    D^k\times[0;1]\arrow{rr}{B} &&U_1.
    \end{tikzcd}
\end{center}
Let $A(d) = (\phi(d),\epsilon(d),r(d)) \in U'_1$ and $B(d,s) = (\phi'(d,s),\epsilon'(d,s))$.
Then, there are two options.
On the one hand, if $\phi(d)\cap B(0,r(d))=\emptyset$, then there exists $\delta$ and $r'(d,s)$ defined on $D^k\times[0,\delta]$ such that $\phi'(d,s)\cap B(0,r'(d,s))=\emptyset$.
Then $H(d,s) = (\phi'(d,s),\epsilon'(d,s),r'(d,s))$ is a lift.
On the other hand, if $\phi(d)\cap B(0,r(d))\neq \emptyset$, there is a single strand which intersects $B(0,r(d))$.
There exists $\delta>0$ such that for every $s\in[0,\delta]$, there is still only one strand which intersects $B(0,r'(s))$. for some $r'$.
This gives us a continuous map $r'$ defined on $D^k\times[0,\delta]$ which provides a lift in the diagram.\\
The fiber over $(\phi,\epsilon)$ contains all $r$ such that $(\phi,\epsilon,r)\in U'_1$. 
If $r$ and $r'$ are in the fiber, consider $r<r''<r'$.
By definition, $\phi\cap B(0,r)$ has at most one path with no internal vertices, and $\phi\cap B(0,r')$ as well, so $\phi\cap B(0,r'')$ also does.\\
Therefore, the fiber of $U'_1\rightarrow U_1$ is an interval, so it is contractible and the map $U'_1\rightarrow U_1$ is a weak equivalence.
Things proceed in almost the same fashion for the map $U'_n\rightarrow U_n$.
\end{proof}

\begin{defi}
Let $U''_{1n}$ be the space of all $(\phi, \epsilon, r, r'),$ where $(\phi, \epsilon,r)\in U'_n$ and $(\phi, \epsilon,r')\in U'_1$.
\end{defi}

\begin{rema}
    If $(\phi,\epsilon,r,r')\in U''_{1n}$, then necessarily $r>r'$, and the internal vertex is in $B(0,r)\backslash B(0,r')$.
\end{rema}

\begin{lem}
    The map $U''_{1n}\rightarrow U_{1n}$ is a weak equivalence.
\end{lem}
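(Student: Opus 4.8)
The plan is to exhibit $U''_{1n}$ as a fiber product over $U_{1n}$ of two restrictions of the maps from Lemma~\ref{projectionprime}, and then invoke stability of acyclic Serre fibrations under pullback.

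First I would note that Lemma~\ref{projectionprime} establishes more than its statement records: the maps $U'_1\to U_1$ and $U'_n\to U_n$ are Serre microfibrations whose fibres are nonempty intervals, hence weakly contractible, so by \cite[Lemma~2.2]{weisslemma} they are in fact Serre fibrations, that is, \emph{acyclic} Serre fibrations. Since $U_1$ and $U_n$ are open in $\Phi_N^N$, the subspace $U_{1n}=U_1\cap U_n$ is open in each of $U_1$ and $U_n$; writing $U'_1|_{U_{1n}}$ and $U'_n|_{U_{1n}}$ for the (open) subspaces of $U'_1$, resp.\ $U'_n$, lying over $U_{1n}$, these are precisely the pullbacks of the above maps along the inclusions, and so $U'_1|_{U_{1n}}\to U_{1n}$ and $U'_n|_{U_{1n}}\to U_{1n}$ are again acyclic Serre fibrations.

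Next I would identify $U''_{1n}$ with the pullback $U'_n|_{U_{1n}}\times_{U_{1n}}U'_1|_{U_{1n}}$: unwinding the definitions, a point of this pullback is a pair $(\phi,\epsilon,r)\in U'_n$ and $(\phi,\epsilon,r')\in U'_1$ sharing the same underlying $(\phi,\epsilon)$ (which then automatically lies in $U_{1n}$), i.e.\ exactly the data of a point $(\phi,\epsilon,r,r')\in U''_{1n}$; and the topologies agree, both being subspace topologies inside the same ambient space of embeddings together with the extra $\mathbb{R}^2$ recording $(r,r')$. Under this identification the map of the statement is the composite
\[
U''_{1n}=U'_n|_{U_{1n}}\times_{U_{1n}}U'_1|_{U_{1n}}\xrightarrow{\ \mathrm{pr}\ }U'_1|_{U_{1n}}\longrightarrow U_{1n}.
\]
Here $\mathrm{pr}$ is the pullback of the acyclic Serre fibration $U'_n|_{U_{1n}}\to U_{1n}$ along $U'_1|_{U_{1n}}\to U_{1n}$, hence is itself an acyclic Serre fibration, in particular a weak equivalence; and $U'_1|_{U_{1n}}\to U_{1n}$ is a weak equivalence by the first step. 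By two-out-of-three, $U''_{1n}\to U_{1n}$ is a weak equivalence.

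A more hands-on alternative, closer in style to Lemma~\ref{projectionprime}, would be to check directly that $U''_{1n}\to U_{1n}$ is a Serre microfibration (combining the two lifting arguments there) and to compute its fibre over $(\phi,\epsilon)$ as the set of pairs $(r,r')$ with $\im(\phi)\cap B(0,r)$ an affine tree with one internal vertex and $\im(\phi)\cap B(0,r')$ containing no internal vertex and at most one path; by the remark preceding the statement every such pair satisfies $r>r'$ automatically, so this fibre is the full product of the two interval fibres of Lemma~\ref{projectionprime}, hence convex and contractible, and one concludes again by \cite{weisslemma}. The only point needing any care, in either approach, is precisely this decoupling: one must know that no constraint linking $r$ and $r'$ is hidden in the definition of $U''_{1n}$ beyond those already treated in Lemma~\ref{projectionprime}, which is exactly what the preceding remark supplies and what makes the fibre an honest product rather than merely a retract of one.
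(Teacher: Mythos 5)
Your proposal is correct, and your ``hands-on alternative'' is precisely the paper's argument: the paper shows $U''_{1n}\to U_{1n}$ is a microfibration by the same lifting argument as in Lemma~\ref{projectionprime}, identifies the fiber over $(\phi,\epsilon)$ as the product of the two interval fibers (with no hidden constraint coupling $r$ and $r'$ --- the inequality $r>r'$ of the preceding remark is a consequence, not a condition), and concludes contractibility. Your primary route is a genuine, if mild, repackaging: you upgrade the maps of Lemma~\ref{projectionprime} to acyclic Serre fibrations via \cite[Lemma~2.2]{weisslemma}, identify $U''_{1n}$ with the fiber product $U'_n|_{U_{1n}}\times_{U_{1n}}U'_1|_{U_{1n}}$, and conclude by pullback-stability of acyclic fibrations plus two-out-of-three. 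What this buys is that you never have to re-run the microfibration lifting argument for the combined map; what it costs is the (easy but necessary) verification that the set-theoretic and topological identification of $U''_{1n}$ with the fiber product is exact, which you do address. Both arguments rest on the same essential point --- that the fiber is an honest product of nonempty intervals --- so I would regard your proof as a correct variant of the paper's, with the alternative being identical to it.
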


\begin{proof}
The projection $U''_{1n}\rightarrow U_{1n}$ is also a microfibration. The proof is similar to that of Lemma \ref{projectionprime}. The fiber is the space of pairs $(r,r')$ such that $(\phi, \epsilon, r)$ is an element of $U'_n$ and $(\phi, \epsilon, r')$ is an element of $U'_1$, so it is a product of intervals of $\mathbb{R}$ and therefore contractible.
\end{proof}

\begin{cor}
The following square is homotopy cocartesian.
    \begin{center}
    \begin{tikzcd}
    U''_{1n}\arrow{r}\arrow{d}&U'_1\arrow{d}\\
    U'_n\arrow{r} &\Phi_N^N
    \end{tikzcd}
    \end{center}
\end{cor}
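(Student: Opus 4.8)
The plan is to deduce this formally from Lemma~\ref{openpushout} together with the three weak equivalences $U''_{1n}\to U_{1n}$, $U'_1\to U_1$ and $U'_n\to U_n$ proved just above, using the fact that a map of commutative squares of spaces which is a weak equivalence at each of the four vertices carries homotopy cocartesian squares to homotopy cocartesian squares.

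First I would organize the relevant maps into a commutative cube whose back face is the square of Lemma~\ref{openpushout} and whose front face is the primed square. The edges connecting the two faces are the projections that forget the auxiliary radius parameters: $U''_{1n}\to U_{1n}$ forgets the pair $(r,r')$, $U'_1\to U_1$ and $U'_n\to U_n$ forget $r$, and the fourth edge $\Phi_N^N\to\Phi_N^N$ is the identity. Commutativity of the cube is immediate from the definitions: every face of the cube is built out of the open inclusions among the $U$'s, the inclusions into $\Phi_N^N$, and the coordinate-forgetting projections, and going around any face amounts to the same operation on an embedding $\phi$ together with discarding (or not introducing) the chosen radii. In particular the two composites $U''_{1n}\to U'_1\to\Phi_N^N$ and $U''_{1n}\to U'_n\to\Phi_N^N$ agree, and they are compatible, via the edge maps of the cube, with the corresponding composites in the unprimed square.

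Now the four edge maps of the cube are all weak equivalences: the identity on $\Phi_N^N$, the maps $U'_1\to U_1$ and $U'_n\to U_n$ of Lemma~\ref{projectionprime}, and the map $U''_{1n}\to U_{1n}$ of the lemma just proved. By Lemma~\ref{openpushout} the back face is homotopy cocartesian. Since the homotopy pushout of a span is invariant under objectwise weak equivalence of spans, the natural map from the homotopy pushout of $U'_n\leftarrow U''_{1n}\to U'_1$ to the homotopy pushout of $U_n\leftarrow U_{1n}\to U_1$ is a weak equivalence; composing with the weak equivalence from the latter homotopy pushout to $\Phi_N^N$ (and using that the maps to $\Phi_N^N$ match under the identity on $\Phi_N^N$), we conclude that the canonical map from the homotopy pushout of $U'_n\leftarrow U''_{1n}\to U'_1$ to $\Phi_N^N$ is a weak equivalence. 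That is precisely the assertion that the primed square is homotopy cocartesian.

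The only point that requires attention is the commutativity of the cube, i.e. that the forgetful maps on the primed spaces are genuinely compatible with the inclusions among the $U$'s and with the structure maps to $\Phi_N^N$; but this is checked directly from the definitions, since every arrow in sight is either an inclusion of open subsets of $\Phi_N^N$ or a projection forgetting a radius coordinate, and such maps commute on the nose. There is thus no real obstacle: the corollary is a formal consequence of the preceding lemmas via homotopy-invariance of the pushout.
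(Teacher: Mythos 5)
Your argument is correct and is essentially the paper's own proof, which simply replaces each vertex of the homotopy cocartesian square of Lemma \ref{openpushout} by the weakly equivalent primed space; you have merely spelled out the standard invariance of homotopy pushouts under objectwise weak equivalences of spans and checked the commutativity of the comparison cube, which the paper leaves implicit.
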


\begin{proof}
We replace each $U_i$ in the expression of the pushout in lemma \ref{openpushout} with equivalent spaces. 
\end{proof}

\begin{lem}\label{Uprimen}
The space $U'_{n}$ is contractible.
\end{lem}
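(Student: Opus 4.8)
The plan is to deformation-retract $U'_n$ onto a single point, namely the triple $(\phi_0,1,1)$ where $\phi_0$ is the standard corolla $\phi_\alpha(1)$ positioned with its internal vertex at the origin and its $n+1$ edges prolonged by straight rays escaping to infinity (past every box $B_R$ in the defining inverse system of $\Phi_N^N$). The content of the statement is precisely the ``black hole'' picture: because $\Phi_N^N$ is the compactified space in which any part of an embedded forest can be slid off to infinity, the homotopy type of $U'_n$ is governed entirely by the germ at the origin, and for a point of $U'_n$ that germ is a single affine corolla, which carries no moduli beyond the position of its vertex and the scaling parameter $\epsilon$ (the shape $\phi_\alpha(\epsilon)$ being rigid up to translation).

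Fix $(\phi,\epsilon,r)\in U'_n$. Since $\im(\phi)\cap B(0,r)$ is a connected affine tree with exactly one internal vertex, and since $\phi$ is an embedding which near each internal vertex of the underlying forest agrees with a translate of $\phi_\alpha(\epsilon)$, there is a unique internal vertex $v$ with $p:=\phi(v)\in B(0,r)$; moreover $\im(\phi)\cap B(0,r)$ is exactly the star of $v$, every other internal vertex of the forest maps outside $B(0,r)$, and every component not containing $v$ has image disjoint from $B(0,r)$. \emph{Step 1 (evacuate to infinity).} Keeping the corolla in the vertex cube $C_\epsilon$ around $v$ fixed, dilate everything lying outside $C_\epsilon$ away from $v$ by a factor $\tfrac{1}{1-s}$, $s\in[0,1)$, and fill the resulting annular gap along the $n+1$ edges of the corolla by straight radial segments. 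As $s\to1$ the image of this path under each projection $\Phi_N^N\to\Phi(\mathbb{R}^{N}\times\mathbb{R},B_R)$ is eventually constant, so the path extends continuously to $s=1$, where it lands on the corolla around $v$ together with $n+1$ straight infinite rays; throughout, $\im(\cdot)\cap B(0,r)$ is still just the star of $v$, so we remain in $U'_n$. \emph{Step 2 (recenter).} Now the only finite feature is that corolla, with vertex at $p$ and $|p|<r$; enlarge $r$ freely, translate the whole picture by $-tp$ for $t\in[0,1]$ so that the vertex reaches the origin, then shrink $r$ back to $1$ (legitimate once the vertex sits at the origin, since then $\im(\cdot)\cap B(0,\rho)$ is the star of $v$ for every $\rho>0$). \emph{Step 3 (normalize).} Finally rescale $\epsilon$ linearly to $1$. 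Since the corolla-with-rays is determined by the vertex position and $\epsilon$, the endpoint is $(\phi_0,1,1)$, and the concatenation of Steps 1--3 contracts $U'_n$.

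The only step that demands genuine care is Step 1: one has to write the prolongation homotopy explicitly, check that it depends continuously on the point of $U'_n$ and extends continuously as $s\to1$ (this is exactly where the inverse-limit description of $\Phi_N^N$ is used), verify that every intermediate map is still an embedding of an $n$-ary forest satisfying the conditions of Definition \ref{embeddingspaces} --- the $n+1$ straight bridging segments and rays do not meet each other or the central corolla because there is an extra dimension of room in $\mathbb{R}^{N+1}$ and all edges stay monotone in the time coordinate --- and confirm that the intersection with $B(0,r)$ is genuinely undisturbed. Steps 2 and 3 are then routine rigid motions and rescalings. It is worth noting that this is the one place in the whole argument where passing from the merely highly connected embedding spaces $\E_N(\cdot)$ to the compactified spaces $\Phi_\bullet^N$ is indispensable: it is the freedom to push mass to infinity that upgrades connectivity to contractibility.
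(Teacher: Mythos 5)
Your proposal is correct and follows essentially the same route as the paper: the paper's proof also reduces a point of $U'_n$ to the affine corolla determined by its vertex position by pushing everything outside $B(0,r)$ to infinity and rescaling, then contracts the resulting copy of $D^{N+1}$ of vertex positions (your Steps 2--3 are exactly that contraction, made explicit). The only presentational difference is that the paper packages the argument as a homotopy equivalence $f\colon U'_n\rightleftarrows D^{N+1}\colon g$ rather than as a single deformation retraction onto the standard corolla.
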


\begin{proof}
    Given an element $(\phi,\epsilon,r)\in U'_n$, the ball $B(0,r)$ contains a unique internal vertex at the point $v$. 
    We define a map $f: U'_n\rightarrow D^{N+1}=B(0,1)$ which sends $(\phi, \epsilon, r)$ to $\frac{v}{r}\in D^{N+1}$.
Let us also define a map $g: D^{N+1}\rightarrow U_n$ which takes a point in $D^{N+1}$ and sends it to the affine tree which has one internal vertex centered around that point, setting $\epsilon = r = 1$.
These maps are homotopy inverses: it is clear that $f\circ g$ is the identity. Moreover, a homotopy from $g\circ f$ to the identity of $U'_n$ is given by pushing to infinity everything outside the ball of radius $r$ and rescaling the radius (and $\epsilon$) to 1.
The space $U'_n$ is therefore contractible.
\end{proof}

\begin{lem}\label{unbrin}
There is a pair of homotopy equivalences 
\begin{center}
\begin{tikzcd}
    f': U'_1 \arrow{r} &S^N : g'. \arrow[l, shift right]
\end{tikzcd}
\end{center}
\end{lem}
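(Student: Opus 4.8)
The plan is to identify $U'_1$, up to homotopy, with the one-point compactification of the open unit $N$-ball, the extra point corresponding to the ``empty'' local picture. Write $S^N=\mathring{B}^N(0,1)\sqcup\{\ast\}$. Recall that the ambient space of $\Phi_N^N$ is $\mathbb{R}^{N+1}=\mathbb{R}\times\mathbb{R}^N$, with the first factor the time direction, and that a point of $U'_1$ records, inside $B(0,r)$, either the empty set or a single time-monotone embedded arc joining two points of $\partial B(0,r)$. (Heuristically, a local one-strand picture is codimension $N$ and time-monotone, so its transverse datum is an $\mathbb{R}^N$; adjoining the empty picture produces $S^N$.)

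First I would write down the two maps. Define $g'\colon S^N\to U'_1$ by $g'(\ast)=(\varnothing,1,1)$, the empty embedding with $\epsilon=r=1$, and $g'(y)=(\gamma_y,1,1)$ for $y\in\mathring B^N(0,1)$, where $\gamma_y$ is the embedding of the trivial one-edge $n$-ary tree onto the vertical line $\mathbb{R}\times\{y\}$, prolonged constantly in time (so it is time-monotone and has no internal vertex). This lands in $U'_1$, and it is continuous at $\ast$ because as $|y|\to 1^-$ the arc $\im\gamma_y\cap B(0,1)$ shrinks to a boundary point of the ball, so $\gamma_y$ tends to the empty picture in $\Phi_N^N$. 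Being a continuous injection of a compact space into a Hausdorff space, $g'$ is a homeomorphism onto its image $V$. Define $f'\colon U'_1\to S^N$ by $f'(\phi,\epsilon,r)=\ast$ when $\im\phi\cap B(0,r)=\varnothing$, and otherwise $f'(\phi,\epsilon,r)=\tfrac1r$ times the $\mathbb{R}^N$-component of the temporal midpoint of the arc $\gamma=\im\phi\cap B(0,r)$, that is, of the point of $\gamma$ whose time coordinate is the average of the two extreme time values attained on $\gamma$. This point lies in $\mathring B^N(0,1)$ because the midpoint lies in the open ball $B(0,r)$, and $f'$ is continuous: the ``at most one arc'' clause in the definition of $U'_1$ makes $\gamma$, hence its temporal midpoint, vary continuously, and as the arc degenerates and leaves the ball the value tends to $\ast$. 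A direct check gives $f'\circ g'=\id_{S^N}$.

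It then remains to produce a homotopy $g'\circ f'\simeq\id_{U'_1}$, which I would package as a deformation retraction $R_s$ of $U'_1$ onto $V$ with $R_1=g'\circ f'$, built in two stages. First, rescale the ambient $\mathbb{R}^{N+1}$ by $z\mapsto z/\lambda$ with $\lambda$ moving from $1$ to $r$; this carries $(\phi,\epsilon,r)$ to $(\mu_r\phi,\epsilon/r,1)$ (writing $\mu_\lambda$ for the scaling), and it stays in $U'_1$ throughout, since inside the shrinking ball one simply sees the rescaled original picture. Second, with radius now $1$, translate the strand in the time direction so that the temporal midpoint of its arc sits at $t=0$ (so the arc passes through $(0,y)$ with $y=f'(\phi,\epsilon,r)$), then straighten the arc to the vertical chord through $(0,y)$ by linear interpolation inside the convex ball $B(0,1)$, and finally move $\epsilon$ linearly to $1$; the endpoint is $(\gamma_y,1,1)=g'(f'(\phi,\epsilon,r))$, and the whole homotopy fixes $V$ pointwise. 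The empty locus is pushed to $(\varnothing,1,1)$ by the same rescaling followed by sliding the far-away strands off to infinity. The main obstacle is the straightening step: one must check that the interpolation never makes the deformed strand meet $B(0,1)$ in more than one arc, and never creates an internal vertex, which requires arranging the interpolation so that the portion of the strand outside the relevant time-window is kept outside the ball; this is exactly where the ``at most one strand closest to the origin'' hypothesis in the definition of $U'_1$ is used. Granting this, $f'$ and $g'$ are mutually inverse homotopy equivalences.
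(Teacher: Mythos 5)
Your overall strategy coincides with the paper's: both send a local picture to the position of its unique strand (with the empty picture going to the basepoint), invert by placing a constant vertical strand, and finish with a straightening homotopy; your $g'$ is essentially the paper's map verbatim. The one place you genuinely diverge is the formula for $f'$, and that is where the argument breaks. You record the $\mathbb{R}^N$-component of the \emph{temporal midpoint} of the arc $\im(\phi)\cap B(0,r)$, and you assert that as the arc degenerates and leaves the ball this value tends to $\ast$. That is false in general: the arc can degenerate to a tangency point $(t_0,y_0)\in\partial B(0,r)$ with $t_0\neq 0$ (take a strand that stays away from the ball except for a sharp dip near time $t_0$, and shrink the dip until the strand just grazes the sphere). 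Along such a family the temporal midpoint converges to $(t_0,y_0)$, so your $f'$ converges to $y_0/r$, a point of norm $\sqrt{r^2-t_0^2}/r<1$ in the \emph{interior} of the disk, whereas the limiting configuration has empty intersection with $B(0,r)$ and must be sent to $\ast$. All members of this family lie in $U'_1$, so this is a genuine discontinuity of your $f'$, not an excludable edge case.

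The paper's $f'$ is chosen precisely to avoid this: it records the intersection of the strand with the time-zero slice of the ball, i.e.\ $\tilde\phi^{-1}(0)\cap B(0,r)$ rescaled by $1/r$, with the value declared to be $\ast$ when that slice is missed. With this choice the recorded point is forced out to the equatorial boundary $S^{N-1}$ exactly when the arc is about to stop meeting the slice, which is what continuity into $D^N/\partial D^N$ requires. Your proof is repaired by substituting this $f'$; your $g'$ and your two-stage rescale-and-straighten homotopy then go through with the straightening aimed at the vertical chord through the time-zero point rather than through the temporal midpoint. The remaining informal points you flag --- keeping the strand outside the ball away from the relevant time window during the interpolation, and the continuity of $g'$ at $\ast$ in the topology of $\Phi_N^N$ --- are treated at the same level of detail in the paper's own proof, so I do not count them against you.
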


\begin{proof}
    We think of $S^N$ as $D^N$ with the boundary collapsed to a point. We can denote this by $D^N\cup_{S^{N-1}}*$.
Let us define a map 
\begin{align*}
f': U'_1 &\rightarrow D^N\cup_{S^{N-1}} *
\end{align*}
which sends $(\phi,\epsilon,r)$ to $x_0:= \tilde{\phi}^{-1}(0)\cap B(0,r)$, rescaled by a factor of $\frac{1}{r}$, if $\im(\phi)\cap B(0,r)$ is not empty.
If $\im(\phi)\cap B(0,r)=\emptyset$, then $f'$ maps $\phi$ to $*$.
We define another map 
\begin{align*}
g': D^N\cup_{S^{N-1}} * &\rightarrow U'_1\\
x &\mapsto (\phi:t\mapsto (x,t),1,1)\\
* &\mapsto (\emptyset,1,1)
\end{align*}
These maps are continuous. When a path goes to infinity, it enters the equivalence class of the basepoint of $\Phi_N^N$. 
Notice that $f'\circ g' = \id$. Moreover, there is a homotopy $h'$ between $\id$ and $g'\circ f'$ which linearly straightens the unique path inside the ball to a constant path.
Therefore, $U'_1$ and $S^N$ are homotopy equivalent.
\end{proof}

\begin{lem}\label{intersection}
There is a pair of homotopy equivalences:
\begin{center}
    \begin{tikzcd}
        f'': U''_{1n} \arrow{r} &S^N : g''. \arrow[l, shift right]
    \end{tikzcd}
    \end{center}
\end{lem}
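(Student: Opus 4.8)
The plan is to follow the pattern of Lemmas~\ref{Uprimen} and~\ref{unbrin}: exhibit explicit maps $f''$ and $g''$, check that one composite is the identity on the nose, and homotope the other composite to the identity. The new feature relative to Lemma~\ref{unbrin} is that an element of $U''_{1n}$ carries an internal vertex $v$; by the remark preceding the present lemma this vertex lies in the annulus $B(0,r)\setminus B(0,r')$, which deformation retracts onto a sphere $S^N$, and this is what will replace the ball $D^{N+1}$ appearing in Lemma~\ref{Uprimen}.

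Concretely, I would take $f''\colon U''_{1n}\to S^N$ to record the direction of the internal vertex, $f''(\phi,\epsilon,r,r')=v/|v|$ (well defined since $|v|\geq r'>0$), which is visibly continuous. For $g''$ I would fix standard radii $r_0'<\rho_0<r_0$ and send $\omega\in S^N$ to a standard model: an affine $n$-ary caret with its internal vertex at $\rho_0\omega$, with the incoming root edge and the $n$ outgoing leaf strands extended by straight segments and then prolonged by constant paths, the whole routing depending continuously on $\omega$ and chosen so that at most one of these strands meets the inner ball $B(0,r_0')$ (such a routing exists once $r_0',\rho_0,r_0$ and the embedding angle are chosen suitably, the time-monotonicity of Definition~\ref{embeddingspaces} being easy to respect). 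One then checks that $g''$ lands in $U''_{1n}$ (the caret is the unique internal vertex inside $B(0,r_0)$, and the inner-ball condition holds) and that $f''\circ g''=\mathrm{id}_{S^N}$ by construction.

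It remains to produce a homotopy $g''\circ f''\simeq\mathrm{id}_{U''_{1n}}$, which is the only delicate point. I would carry it out in stages, each performed inside $U''_{1n}$: slide the internal vertex radially from $v$ to $\rho_0\,v/|v|$ while interpolating the radii $(r,r')$ to $(r_0,r_0')$, keeping the vertex in the annulus throughout; then isotope the root edge and the $n$ leaf strands to the standard routing of $g''(v/|v|)$, pushing outward any strand that would enter the inner ball so that the number of strands inside $B(0,r')$ never increases. The main obstacle is precisely keeping the moving family inside $U''_{1n}$---that is, ``exactly one internal vertex inside $B(0,r)$'' together with ``at most one strand inside $B(0,r')$''---while the strands are re-routed; there is enough room to do this because configuration spaces of finitely many points in $\mathbb{R}^{N+1}$ away from a ball are highly connected (Lemma~\ref{connectedness}), and, as in Lemma~\ref{unbrin}, any strand that in the process escapes to infinity is absorbed into the equivalence class of the basepoint of $\Phi_N^N$. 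With this identification in hand the subsequent homotopy-pushout description of $\Phi_N^N$ goes through, the forgetful map $U''_{1n}\to U'_1$ becoming (up to homotopy) a self-map of $S^N$ of degree $\pm(n-1)$.
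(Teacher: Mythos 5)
Your proposal is correct and follows essentially the same route as the paper: $f''$ records the (rescaled) position of the internal vertex, $g''$ places a standard affine caret at a fixed radius in the given direction with the inner radius $r'$ chosen small enough that only one strand can meet $B(0,r')$, one composite is the identity on the nose, and the other is homotoped to the identity by radially translating the vertex to the standard position while rescaling $r$ and $r'$. Your extra care about re-routing the strands during the homotopy only makes explicit what the paper leaves implicit.
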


\begin{proof}
An element $(\phi,\epsilon,r,r')$ of $U''_{1n}$ has an internal vertex at a distance at least $r'$ of the origin and at most $r$.
We can define a map $f'': U''_{1n}\rightarrow S^N$ which sends an element $(\phi, \epsilon, r, r')$ to the internal vertex, rescaled to get an element in $S^N$, as in figure \ref{sphere}.

\begin{figure}[h!]
    \centering
        \def\svgwidth{0.6\hsize}
        \begingroup%
  \makeatletter%
  \providecommand\color[2][]{%
    \errmessage{(Inkscape) Color is used for the text in Inkscape, but the package 'color.sty' is not loaded}%
    \renewcommand\color[2][]{}%
  }%
  \providecommand\transparent[1]{%
    \errmessage{(Inkscape) Transparency is used (non-zero) for the text in Inkscape, but the package 'transparent.sty' is not loaded}%
    \renewcommand\transparent[1]{}%
  }%
  \providecommand\rotatebox[2]{#2}%
  \newcommand*\fsize{\dimexpr\f@size pt\relax}%
  \newcommand*\lineheight[1]{\fontsize{\fsize}{#1\fsize}\selectfont}%
  \ifx\svgwidth\undefined%
    \setlength{\unitlength}{368.87920229bp}%
    \ifx\svgscale\undefined%
      \relax%
    \else%
      \setlength{\unitlength}{\unitlength * \real{\svgscale}}%
    \fi%
  \else%
    \setlength{\unitlength}{\svgwidth}%
  \fi%
  \global\let\svgwidth\undefined%
  \global\let\svgscale\undefined%
  \makeatother%
  \begin{picture}(1,0.77422958)%
    \lineheight{1}%
    \setlength\tabcolsep{0pt}%
    \put(0,0){\includegraphics[width=\unitlength,page=1]{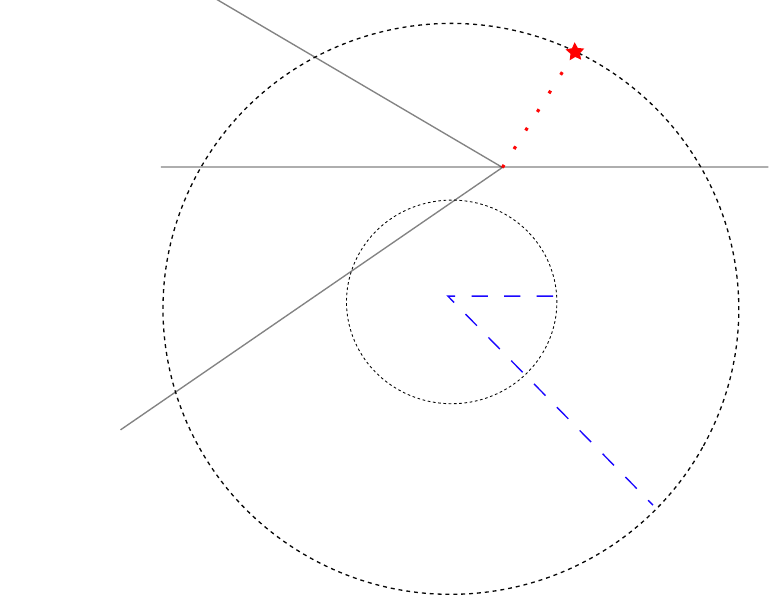}}%
    \put(0.64697744,0.41763987){\color[rgb]{0,0,0}\makebox(0,0)[lt]{\lineheight{1.25}\smash{\begin{tabular}[t]{l}$r'$\end{tabular}}}}%
    \put(0.5895434,0.32135343){\color[rgb]{0,0,0}\makebox(0,0)[lt]{\lineheight{1.25}\smash{\begin{tabular}[t]{l}$r$\end{tabular}}}}%
    \put(0.65711281,0.64246916){\color[rgb]{0,0,0}\makebox(0,0)[lt]{\lineheight{1.25}\smash{\begin{tabular}[t]{l}$f''$\end{tabular}}}}%
  \end{picture}%
\endgroup%

        \caption{The map $f''$ sends the tree to the point of the sphere labeled by a star.}\label{sphere}
    \end{figure}
Conversely, given a point $x$ in $S^N$, we can construct $\phi$ by placing an internal vertex inside $B(0,1)$ centered at $\frac{4x}{5}$. 
We can then note that $(\phi, 1, 1)$ is an element of $U'_{n}$ and $(\phi, 1, \frac{\pi}{20n})$ is an element of $U'_1$. 
Indeed, at most one path can intersect $B(0,\frac{\pi}{20n})$. 
This provides a map $g'': S^N\rightarrow U''_{1n}$ (taking $\epsilon = 1$).
The composition $f''\circ g''$ is the identity on the sphere. 
Let us build a homotopy $H''$ between $\id$ and $g''\circ f''$: it simply translates $\phi$ radially from its initial position centered around $x$ to a position centered at $\frac{4}{5}\frac{x}{\vert\vert x\vert\vert}$. 
The radius $r$ can be continuously rescaled to 1, and $r'$ is continuously rescaled to $\frac{\pi}{20n}$.
\end{proof}

Let $p$ be the map 
\[
S^N \xrightarrow{\pc} \vee_{n+1} S^N \xrightarrow{\psi_{n,1}} S^N 
\]
which first pinches then collapses the sphere $S^N$ as pictured in figure \ref{spheremap} into $n+1$ spheres: each point in a colored zone produces a ray which goes towards the center. 
The intersection point of the ray with the equatorial disk of radius $\frac{\pi}{20n}$ provides a map of the colored zone into $D^N$. 
All the unmarked zones of the outer sphere are sent to the basepoint of $S^N$. 
This provides a continuous pinch map $S^N \rightarrow \bigvee_{n+1} S^N$.
Then, we map each of the copies of the sphere into one sphere with the identity for n of the copies (the ones on the right) and the symmetry with respect to the hyperplane perpendicular to the vertical dimension of the tree for the last copy (the one on the left). 

\begin{lem}\label{specialmap}
    The following square commutes up to homotopy in both directions:
    \begin{center}
        \begin{tikzcd}
        U''_{1n} \arrow[d, bend right = 30, swap, "f''"]\arrow{r}{\pi} &U'_1\arrow[d, bend right = 30, swap, "f'"]\\
        S^N\arrow[u, bend right = 30, "g''"]\arrow{r}{p} &S^N\arrow[u, bend right = 30, "g'"],
        \end{tikzcd}
    \end{center}
    where the maps $f'$, $g'$, $f''$ and $g''$ are the homotopy equivalences defined in the proofs of lemmas \ref{intersection} and \ref{unbrin}.
\end{lem}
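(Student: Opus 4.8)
The plan is to reduce both homotopy-commutativities to a single statement about self-maps of $S^N$ and then to establish that statement by a direct geometric analysis of the corollas $g''(x)$.

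\textbf{Reduction.} By Lemmas~\ref{unbrin} and~\ref{intersection}, $f',g'$ and $f'',g''$ are mutually inverse homotopy equivalences with the strict identities $f'\circ g'=\id_{S^N}$, $f''\circ g''=\id_{S^N}$ and the homotopies $g'\circ f'\simeq\id_{U'_1}$, $g''\circ f''\simeq\id_{U''_{1n}}$. I claim it suffices to prove the single statement that $f'\circ\pi\circ g''\simeq p$ as self-maps of $S^N$. Indeed, granting this,
\[ f'\circ\pi\ \simeq\ f'\circ\pi\circ(g''\circ f'')\ =\ (f'\circ\pi\circ g'')\circ f''\ \simeq\ p\circ f'', \]
which is the first direction, and
\[ \pi\circ g''\ \simeq\ (g'\circ f')\circ(\pi\circ g'')\ =\ g'\circ(f'\circ\pi\circ g'')\ \simeq\ g'\circ p, \]
which is the second. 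So everything comes down to showing $f'\circ\pi\circ g''\simeq p$.

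\textbf{Unwinding the composite.} Write $S^N\subset\mathbb{R}^N\times\mathbb{R}$ with the last coordinate the time direction. For $x\in S^N$, $g''(x)=(\phi_x,1,1,\tfrac{\pi}{20n})$, where $\phi_x$ is the embedding consisting of a single $n$-ary corolla of unit size and fixed angle $\alpha$ whose vertex has been \emph{translated} to $\tfrac45 x$; then $\pi$ retains $(\phi_x,1,\tfrac{\pi}{20n})\in U'_1$, and $f'$ records the rescaled (by $\tfrac{20n}{\pi}$) position at time $0$ of the unique strand of $\phi_x$ meeting the small ball $B(0,\tfrac{\pi}{20n})$, sending $x$ to the basepoint $*$ if no strand does. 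Because $\phi_x$ is merely a translate of the fixed corolla, its $n+1$ edges point in the fixed directions $\hat\ell_0=-e$ (the root, with time component $\ell_t=-1$) and $\hat\ell_1,\dots,\hat\ell_n=d_1,\dots,d_n$ (the leaves, each with time component $\ell_t=\cos\alpha>0$). A short computation with these affine rays shows that the edge in direction $\hat\ell_i$ issued from $\tfrac45 x$ meets $B(0,\tfrac{\pi}{20n})$ exactly when $x$ lies in a small disk $D_i\subset S^N$ centered at $-\hat\ell_i$, and that on $D_i$ the resulting rescaled time-$0$ point defines a homeomorphism $D_i\xrightarrow{\ \cong\ }D^N=S^N\setminus\{*\}$ whose orientation is governed by the sign of $\ell_t$ (the derivative at the centre $-\hat\ell_i$ is the positive-definite map $\tfrac45(\id+\ell_t^{-2}\,\ell_{\mathrm{sp}}\ell_{\mathrm{sp}}^{\mathsf T})$ precomposed with the projection $\hat\ell_i^{\perp}\to\mathbb{R}^N$, whose sign equals that of $\ell_t$). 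Hence $f'\circ\pi\circ g''$ collapses the complement of the $n+1$ disjoint disks $D_0,\dots,D_n$ to $*$ and restricts to a degree $\pm1$ map on each $D_i$, the $n$ leaf disks all carrying one orientation and the single root disk the opposite one. This matches exactly the construction of $p$ in Figure~\ref{spheremap}: the colored zones are the $D_i$, the rays towards the centre are the edge directions, the equatorial disk of radius $\tfrac{\pi}{20n}$ is $\{t=0\}\cap B(0,\tfrac{\pi}{20n})$, and the reflection on the last copy is precisely what accounts for the root edge pointing the opposite way in time. Since any two maps $S^N\to S^N$ obtained by collapsing the complement of $n+1$ disjoint disks and then mapping $n$ of them by an orientation-preserving and the last by an orientation-reversing homeomorphism onto $D^N$ are homotopic (isotope one disk system onto the other and use that a degree $\pm1$ self-homeomorphism of $D^N$ rel $\partial$ is homotopic to the standard one), the desired homotopy $f'\circ\pi\circ g''\simeq p$ follows.

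\textbf{Where the work is.} Two points need care. First, the disjointness of $D_0,\dots,D_n$ and the fact that each is genuinely a chart neighbourhood: this is exactly where the quantitative choices $r'=\tfrac{\pi}{20n}$ and the angle constraint $\tfrac{\pi}{3n}\le\alpha\le\tfrac{\pi}{2n}$ from Section~\ref{embeddings} enter — one checks that the radius-$\arcsin(\tfrac{\pi}{16n})$ disks about $e,-d_1,\dots,-d_n$ are pairwise disjoint because the leaf directions are spread by enough and $-e$ is far from every $-d_i$. Second, and this is the real content, the orientation bookkeeping showing that it is the root summand — and no leaf summand — that is reflected; this is what makes $p$ a degree $n-1$ map and therefore, in the cocartesian square preceding Lemma~\ref{Uprimen}, makes $\Phi_N^N$ the mod $(n-1)$ Moore space. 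I expect this sign computation, rather than any of the homotopies, to be the main obstacle, and would carry it out via the explicit derivative at the poles $-\hat\ell_i$ indicated above.
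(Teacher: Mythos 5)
Your proposal is correct in strategy but follows a genuinely different route from the paper. The paper verifies the two homotopy-commutativities separately and concretely: for $f'\circ\pi\simeq p\circ f''$ it observes that both composites output the (rescaled) time-zero intersection point of the unique strand meeting $B(0,r')$ — indeed the pinch-and-collapse map $p$ of figure \ref{spheremap} is essentially \emph{defined} so that this holds on the nose — and for $\pi\circ g''\simeq g'\circ p$ it writes down an explicit homotopy rectifying the strand of the corolla $\phi_x$ to the constant path at $p(x)$. You instead use the strict identities $f'\circ g'=\id$, $f''\circ g''=\id$ and the homotopies $g'\circ f'\simeq\id$, $g''\circ f''\simeq\id$ to reduce both directions to the single assertion $f'\circ\pi\circ g''\simeq p$ about self-maps of $S^N$, and then prove that assertion by identifying both maps as ``collapse the complement of $n+1$ disjoint disks and map each homeomorphically'' and invoking Hopf's theorem once the local orientations match. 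Your reduction is valid, and your unwinding of $f'\circ\pi\circ g''$ (the disks $D_i$ about $-\hat\ell_i$, the derivative $\tfrac45(\id+\ell_t^{-2}\ell_{\mathrm{sp}}\ell_{\mathrm{sp}}^{\mathsf T})$ at the poles) is consistent with the definitions of $g''$ and $f'$. What your approach buys is a formally cleaner statement and, more importantly, an explicit verification that the composite really has degree $n-1$ — the paper leaves this implicit in the geometric definition of $p$, and it is exactly what feeds Lemma \ref{finaldelooping}. What it costs is reliance on degree theory where the paper stays elementary, and the fact that you have deferred the one genuinely substantive step: the sign bookkeeping showing that the $n$ leaf disks carry one orientation and the root disk the other. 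You have identified and set up that computation correctly, so I would not call it a gap, but your write-up is not complete until it is carried out; by contrast the paper never needs it because its $p$ is matched to $f'\circ\pi\circ g''$ pointwise rather than by degree.
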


\begin{figure}
    \def\svgwidth{0.65\hsize}
    \begingroup%
  \makeatletter%
  \providecommand\color[2][]{%
    \errmessage{(Inkscape) Color is used for the text in Inkscape, but the package 'color.sty' is not loaded}%
    \renewcommand\color[2][]{}%
  }%
  \providecommand\transparent[1]{%
    \errmessage{(Inkscape) Transparency is used (non-zero) for the text in Inkscape, but the package 'transparent.sty' is not loaded}%
    \renewcommand\transparent[1]{}%
  }%
  \providecommand\rotatebox[2]{#2}%
  \newcommand*\fsize{\dimexpr\f@size pt\relax}%
  \newcommand*\lineheight[1]{\fontsize{\fsize}{#1\fsize}\selectfont}%
  \ifx\svgwidth\undefined%
    \setlength{\unitlength}{378.43890813bp}%
    \ifx\svgscale\undefined%
      \relax%
    \else%
      \setlength{\unitlength}{\unitlength * \real{\svgscale}}%
    \fi%
  \else%
    \setlength{\unitlength}{\svgwidth}%
  \fi%
  \global\let\svgwidth\undefined%
  \global\let\svgscale\undefined%
  \makeatother%
  \begin{picture}(1,1.13746448)%
    \lineheight{1}%
    \setlength\tabcolsep{0pt}%
    \put(0,0){\includegraphics[width=\unitlength,page=1]{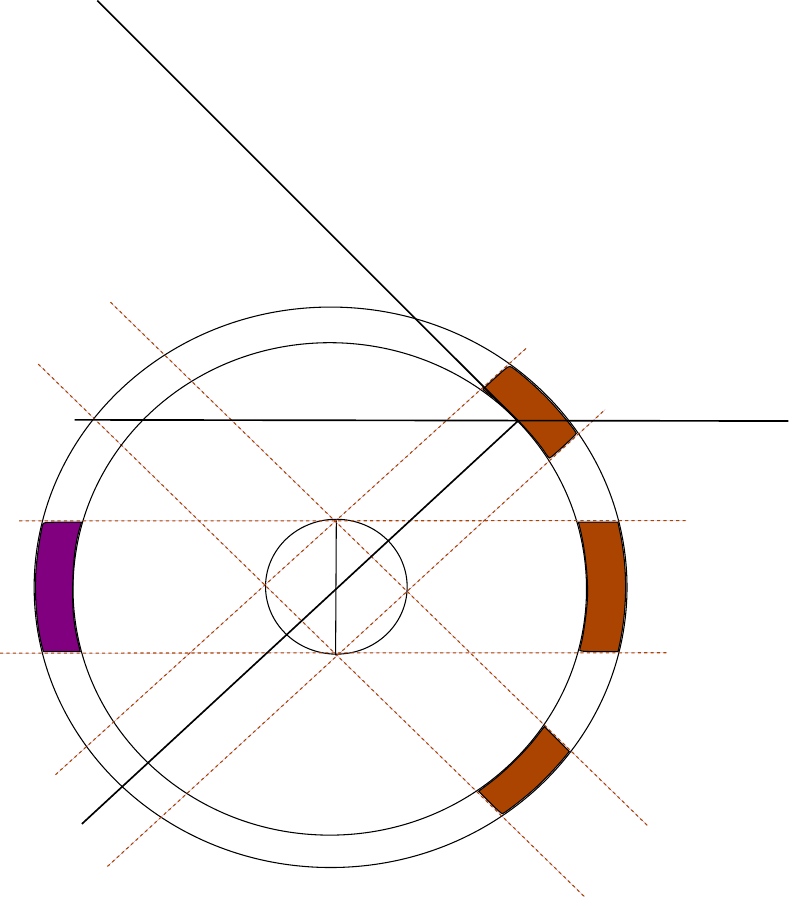}}%
  \end{picture}%
\endgroup%
    \caption{Illustration of the map $U''_{1n}\rightarrow U'_1$}
    \label{spheremap}
\end{figure}

\begin{proof}
    Let us first prove that $f'\circ\pi\simeq p\circ f''$.
Let $(\phi,\epsilon,r,r')\in U''_{1n}$. Then $f'(\pi(\phi,\epsilon,r,r'))$ is the point $\tilde{\phi}^{-1}(0)\cap B(0,r')$ rescaled to be an element of the sphere.
Going the other way, $f''(\phi,\epsilon,r,r')$ is the position of the internal vertex rescaled to be an element of the sphere. 
The map $p$ then sends that element to an element of the sphere which corresponds to where one ray intersects a smaller sphere, which is a rescaling of $\tilde{\phi}^{-1}(0)\cap B(0,r')$.

Let us look at the other square and show that $g'\circ p \simeq \pi\circ g''$. 
Let $x\in S^N$, then $g''(x) = (\phi_x, 1, 1, \frac{\pi}{20n})$ where $\phi_x$ is an affine tree with one junction at the point $\frac{4x}{5}$.
Then $\pi(g''(x))$ is $(\phi_x, 1, \frac{\pi}{20n})$. This is either empty, or the unique path going through the ball $B(0, \frac{\pi}{20n})$.

Going the other way, $\pc$ sends $x$ to the $i$th sphere in the wedge sum, where $i$ is also the strand which crosses the ball of radius $\frac{\pi}{20n}$.
Then $g'(p(x))$ is a constant path at $p(x)$. 
This constant path can be continously rectified to coincide with the path obtained by taking $\pi(g''(x))$.
Therefore, there is a homotopy between $\pi\circ g''$ and $g'\circ p$ (see figure \ref{carre2}).
\end{proof}

\begin{figure}[h!]
    \centering
        \def\svgwidth{0.65\hsize}
        \begingroup%
  \makeatletter%
  \providecommand\color[2][]{%
    \errmessage{(Inkscape) Color is used for the text in Inkscape, but the package 'color.sty' is not loaded}%
    \renewcommand\color[2][]{}%
  }%
  \providecommand\transparent[1]{%
    \errmessage{(Inkscape) Transparency is used (non-zero) for the text in Inkscape, but the package 'transparent.sty' is not loaded}%
    \renewcommand\transparent[1]{}%
  }%
  \providecommand\rotatebox[2]{#2}%
  \newcommand*\fsize{\dimexpr\f@size pt\relax}%
  \newcommand*\lineheight[1]{\fontsize{\fsize}{#1\fsize}\selectfont}%
  \ifx\svgwidth\undefined%
    \setlength{\unitlength}{499.84029935bp}%
    \ifx\svgscale\undefined%
      \relax%
    \else%
      \setlength{\unitlength}{\unitlength * \real{\svgscale}}%
    \fi%
  \else%
    \setlength{\unitlength}{\svgwidth}%
  \fi%
  \global\let\svgwidth\undefined%
  \global\let\svgscale\undefined%
  \makeatother%
  \begin{picture}(1,1.23994244)%
    \lineheight{1}%
    \setlength\tabcolsep{0pt}%
    \put(0,0){\includegraphics[width=\unitlength,page=1]{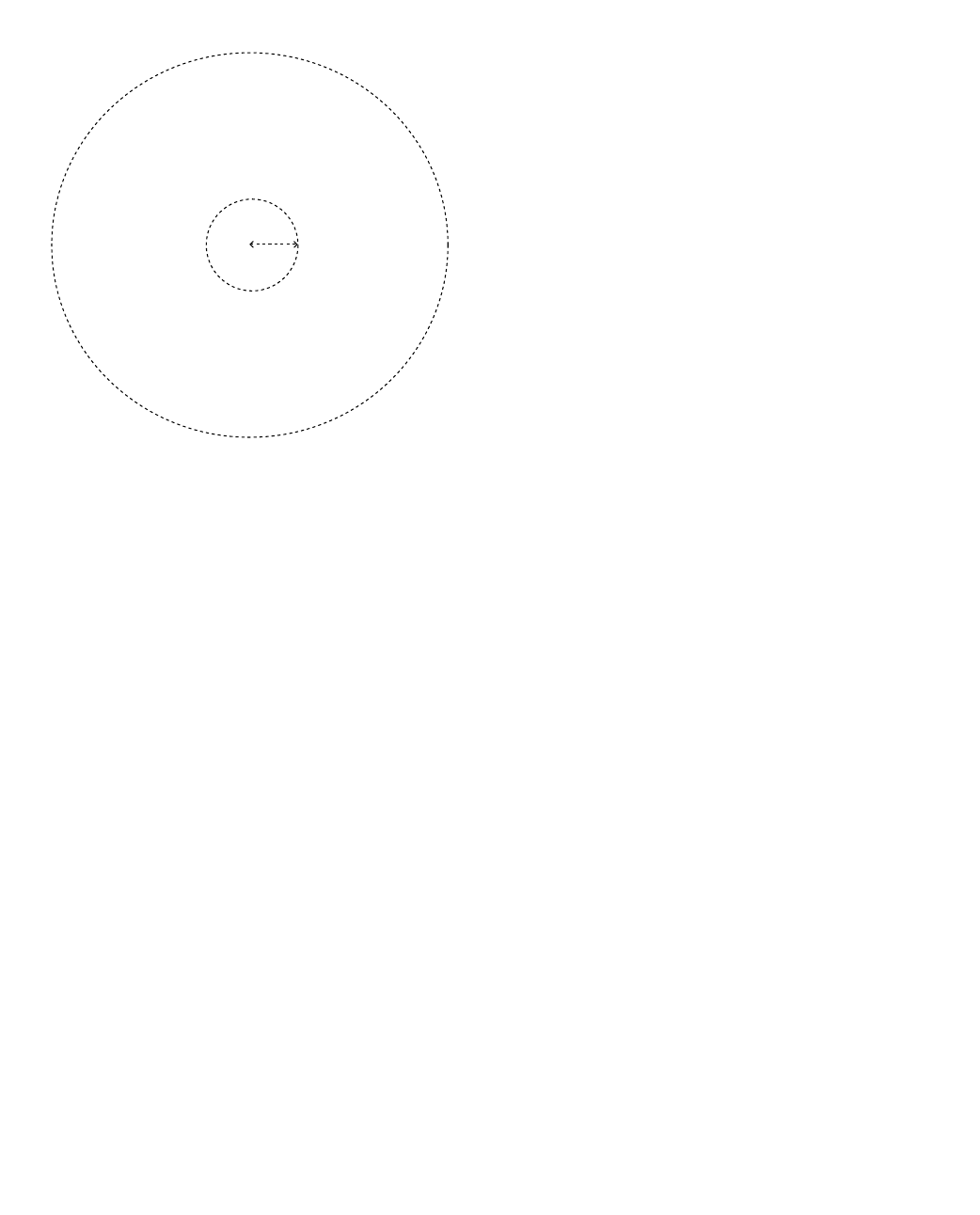}}%
    \put(0.24735996,0.96067361){\color[rgb]{0,0,0}\makebox(0,0)[lt]{\lineheight{1.25}\smash{\begin{tabular}[t]{l}$\frac{\pi}{10n}$\end{tabular}}}}%
    \put(0,0){\includegraphics[width=\unitlength,page=2]{carre2.pdf}}%
    \put(0.20108891,0.73169112){\color[rgb]{0,0,0}\makebox(0,0)[lt]{\lineheight{1.25}\smash{\begin{tabular}[t]{l}$g''(x)$\end{tabular}}}}%
    \put(0.72312148,0.74948769){\color[rgb]{0,0,0}\makebox(0,0)[lt]{\lineheight{1.25}\smash{\begin{tabular}[t]{l}$\pi(g''(x))$\end{tabular}}}}%
    \put(0.42057989,1.11728342){\color[rgb]{0,0,0}\makebox(0,0)[lt]{\lineheight{1.25}\smash{\begin{tabular}[t]{l}$x$\end{tabular}}}}%
    \put(0,0){\includegraphics[width=\unitlength,page=3]{carre2.pdf}}%
    \put(0.18203534,0.23974433){\color[rgb]{0,0,0}\makebox(0,0)[lt]{\lineheight{1.25}\smash{\begin{tabular}[t]{l}$\frac{\pi}{10n}$\end{tabular}}}}%
    \put(0,0){\includegraphics[width=\unitlength,page=4]{carre2.pdf}}%
    \put(0.15000157,0.02974488){\color[rgb]{0,0,0}\makebox(0,0)[lt]{\lineheight{1.25}\smash{\begin{tabular}[t]{l}$p(x)$\end{tabular}}}}%
    \put(0.36830607,0.39042202){\color[rgb]{0,0,0}\makebox(0,0)[lt]{\lineheight{1.25}\smash{\begin{tabular}[t]{l}$x$\end{tabular}}}}%
    \put(0,0){\includegraphics[width=\unitlength,page=5]{carre2.pdf}}%
    \put(0.16075,0.3176243){\color[rgb]{0,0,0}\makebox(0,0)[lt]{\lineheight{1.25}\smash{\begin{tabular}[t]{l}$p(x)$\end{tabular}}}}%
    \put(0,0){\includegraphics[width=\unitlength,page=6]{carre2.pdf}}%
    \put(0.73379954,0.03643855){\color[rgb]{0,0,0}\makebox(0,0)[lt]{\lineheight{1.25}\smash{\begin{tabular}[t]{l}$g'(p(x))$\end{tabular}}}}%
    \put(0,0){\includegraphics[width=\unitlength,page=7]{carre2.pdf}}%
  \end{picture}%
\endgroup%

        \caption{Commutativity of the second square}
        \label{carre2}
    \end{figure}

It turns out that each space $\Phi_N^N$ can be identified with a well-known space.
\begin{defi}
    The mod $(n-1)$ Moore spectrum $\mathbb{M}_{n-1}$ is the homotopy cofiber of $\mathbb{S}\xrightarrow{\times(n-1)}\mathbb{S}$. 
\end{defi}
The $N$th space of the Moore spectrum $\mathbb{M}^N_{n-1}$ is the homotopy cofiber of the map $S^N\xrightarrow{\times (n-1)}S^N$.

\begin{lem} \label{finaldelooping}
There is an equivalence of spaces between $\Phi_N^N$ and $\mathbb{M}^N_{n-1}$.
\end{lem}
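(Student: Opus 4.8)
The plan is to feed the homotopy pushout description of $\Phi_N^N$ into the computations of the preceding lemmas. Recall that the corollary just before Lemma~\ref{Uprimen} exhibits $\Phi_N^N$ as the homotopy pushout of the span $U'_n \leftarrow U''_{1n} \to U'_1$. First I would substitute the three corners by their known homotopy types: $U'_n \simeq *$ by Lemma~\ref{Uprimen}, $U'_1 \simeq S^N$ by Lemma~\ref{unbrin}, and $U''_{1n} \simeq S^N$ by Lemma~\ref{intersection}. The equivalences $f',g',f'',g''$ together with the homotopies of Lemma~\ref{specialmap}, asserting that the square relating the map $U''_{1n}\to U'_1$ to $p\colon S^N\to S^N$ commutes up to homotopy in both directions, provide exactly the data needed to replace the span by the homotopy-equivalent span $* \leftarrow S^N \xrightarrow{p} S^N$ without changing the homotopy type of the pushout. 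Hence $\Phi_N^N$ will be identified with the homotopy cofiber of $p$.

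Next I would pin down $p$ up to homotopy by a degree computation. The map $p$ is the composite $S^N \xrightarrow{\pc} \bigvee_{n+1} S^N \xrightarrow{\psi_{n,1}} S^N$: the pinch map $\pc$ induces the diagonal $1\mapsto(1,\dots,1)$ on $\tilde H_N(-;\mathbb{Z})\cong\mathbb{Z}$, while $\psi_{n,1}$ folds $n$ of the wedge summands by the identity (degree $+1$ each) and the last one by a reflection in a hyperplane (degree $-1$). Therefore, on $\tilde H_N$ the map $p$ is multiplication by $n\cdot 1 + (-1) = n-1$, and since homotopy classes of self-maps of $S^N$ are detected by degree, $p \simeq (\times(n-1))$. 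It then follows that $\Phi_N^N$ is equivalent to the homotopy cofiber of $S^N \xrightarrow{\times(n-1)} S^N$, which is $\mathbb{M}^N_{n-1}$ by definition.

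The hard part will be the honest bookkeeping around the two soft steps. On the one hand, one must check that the colored regions of figure~\ref{spheremap} genuinely assemble into a well-defined pinch map with degree sequence $(1,\dots,1,-1)$ — in particular that the reflected summand contributes $-1$ and not $+1$, which is where the choice of reflecting hyperplane matters. On the other hand, one must verify that the homotopy-commutativity furnished by Lemma~\ref{specialmap}, rather than strict commutativity, suffices to conclude an equivalence of homotopy pushouts; this is standard (a homotopy-commutative square whose two parallel maps are weak equivalences and whose two homotopies are compatible induces a weak equivalence on homotopy pushouts), but it deserves to be cited or spelled out so that the passage from the span $U'_n \leftarrow U''_{1n} \to U'_1$ to $* \leftarrow S^N \xrightarrow{\times(n-1)} S^N$ is rigorous.
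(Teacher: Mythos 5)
Your proposal follows essentially the same route as the paper: replace the corners of the pushout span $U'_n \leftarrow U''_{1n} \to U'_1$ by $* \leftarrow S^N \xrightarrow{p} S^N$ using Lemmas~\ref{Uprimen}, \ref{unbrin}, \ref{intersection} and \ref{specialmap}, and identify the resulting homotopy cofiber with $\mathbb{M}^N_{n-1}$. The only difference is that you make explicit the degree computation showing $p\simeq(\times(n-1))$ and the passage from homotopy-commutative squares to an equivalence of pushouts, both of which the paper absorbs into the phrase ``by definition''; this is a welcome precision rather than a divergence.
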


\begin{proof}
    We deduce from lemmas \ref{Uprimen}, \ref{unbrin}, \ref{intersection} and \ref{specialmap} that the space $\Phi_N^N$ is obtained as the following homotopy pushout:
    \begin{center}
        \begin{tikzcd}
        &\bigvee_{n+1} S^N\arrow{dr}{\psi_{n,1}}\\
            S^N \arrow{rr}\arrow{d}\arrow{ur} &&S^N\\
            D^{N+1}.
        \end{tikzcd}
    \end{center}
    The homotopy colimit of this diagram is by definition the $N$-th space of the Moore spectrum.
    \end{proof}

We would like to extend this levelwise equivalence to an equivalence of spectra by showing compatibility with all the spectrum maps.
To that end, we need to build maps $\Phi_N^N\rightarrow \Omega\Phi_{N+1}^{N+1}$.
Recall that $\Phi_{N+1}^{N+1}$ is a limit of spaces $\Phi^{N+1}_{N+1,R}$, for $R\in\mathbb{R}.$
Let us define a map $\Phi_{N,R}^N\rightarrow \Omega\Phi_{N+1,R}^{N+1}$ which sends $(\phi,\epsilon)\in\Phi_{N,R}^N$ to the path
\begin{align*}
\gamma(t) = 
\begin{cases}
((\phi,t),\epsilon) \text{ for }-R< t<R, \\
* \text{ for } \vert t\vert \geq R.
\end{cases}
\end{align*}
These maps are well-defined for all $R$ and compatible, and therefore lift to a map $\omega_N: \Phi_N^N\rightarrow \Omega\Phi_{N+1}^{N+1}$.
There are maps $p_N: \mathbb{R}^N\times \Phi_0^N\rightarrow \Phi_N^N$ sending $(x,\phi,\epsilon)$ to $(\phi-x,\epsilon)$.
Because of the topology on $\Phi_N^N$, this factors through $S^N$ (if we view $S^N$ as the one-point compactification of $\mathbb{R}^N$) and yields a map $\Phi_0^N\rightarrow \Omega^N\Phi_N^N$.
We need to show that these maps are compatible with the maps $\Phi_0^N\rightarrow \Phi_0^{N+1}$ in the following sense:

\begin{lem}\label{compatibletwo}
    The following square commutes for all $N$:
\begin{center}
    \begin{tikzcd}
        \mathbb{R}^N\times \Phi_0^N\arrow{r}{p_N}\arrow{d} &\Phi_N^N\arrow{d}{\omega_N}\\
        \mathbb{R}^{N}\times \Phi_0^{N+1}\arrow{r}{p_{N+1}} &\Omega \Phi_{N+1}^{N+1}.
    \end{tikzcd}
\end{center} 
\end{lem}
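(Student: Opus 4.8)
The plan is to prove commutativity by unwinding all four maps at the level of individual (representatives of) embeddings and checking that the two composites produce the same loop‑valued map. I would first pin down coordinates once and for all: represent an element of $\Phi_0^N$ by an embedding of a forest into $\mathbb{R}\times I^N$ (time coordinate first), an element of $\Phi_N^N$ by an embedding into $\mathbb{R}\times\mathbb{R}^N$, and an element of $\Phi_{N+1}^{N+1}$ by an embedding into $\mathbb{R}\times\mathbb{R}^N\times\mathbb{R}$, where the final $\mathbb{R}$ is the coordinate created when passing from $N$ to $N+1$. With this convention the right‑hand vertical map $\omega_N$ turns that last coordinate into the loop parameter, the bottom map $p_{N+1}$ (restricted to $\mathbb{R}^N\times\Phi_0^{N+1}$ and then adjointed in the last variable) translates along that same last coordinate, and the left‑hand vertical map is $\id_{\mathbb{R}^N}$ times the map $\iota\colon\Phi_0^N\to\Phi_0^{N+1}$ induced by $\mathbb{R}^N\hookrightarrow\mathbb{R}^{N+1}$, which simply inserts a constant value in the new interval coordinate.

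Then I would chase a point $(x,\phi,\epsilon)\in\mathbb{R}^N\times\Phi_0^N$ both ways. Going right and then down gives $(\phi-x,\epsilon)\in\Phi_N^N$, then the loop $t\mapsto((\phi-x,t),\epsilon)$ (with value the basepoint for $|t|$ large): the embedding obtained from $\phi$ by translating by $-x$ in the first $N$ space coordinates and placing the result in the slice at height $t$ of the new coordinate. Going down and then right gives $(x,\iota(\phi),\epsilon)$, then the loop $t\mapsto(\iota(\phi)-(x,t),\epsilon)$: the embedding $\iota(\phi)$ translated by $-x$ in the first $N$ space coordinates and by $-t$ in the new coordinate. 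Since $\iota(\phi)$ is just $\phi$ sitting in the slice at the fixed constant height $c$, the two loops describe the same one‑parameter family of embeddings up to the affine change of parameter $t\mapsto c-t$ of the loop coordinate, which is absorbed into the chosen orientation of the new coordinate; fixing that orientation consistently in the definitions of $\omega_N$ and $p_{N+1}$ makes the two composites literally equal. I would also record the two routine compatibilities that make the diagram well‑posed: both composites respect the limits over $R$ defining $\Phi_{N+1}^{N+1}$, and both are compatible with the factorisation of $p_N$ through $S^N$, since an embedding leaves the relevant ball in the new coordinate precisely when $|t|$ exceeds the corresponding radius.

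The only real work here is bookkeeping: tracking which coordinate is adjoined at each stage and choosing the orientation and sign conventions (for the translations in $p_N$ and $p_{N+1}$ and for the loop coordinate of $\omega_N$) so that ``place the forest in the slice at height $t$'' and ``translate by $-t$'' agree on the nose rather than differing by a reflection. There is no homotopy‑theoretic obstruction; once the conventions are pinned down the identity of the two composites is immediate from the formulas, and, should one prefer not to hard‑wire the conventions, the two composites differ at worst by the reversal of a single loop coordinate, which is the canonical affine homotopy and is harmless for the compatibility of the maps $\Phi_0^N\to\Omega^N\Phi_N^N$ needed in the colimit over $N$.
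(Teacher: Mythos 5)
Your proposal is correct and follows essentially the same route as the paper: chase $(x,\phi,\epsilon)$ around both composites and observe that $\omega_N(p_N(x,\phi,\epsilon))$ is the loop $t\mapsto((\phi-x,t),\epsilon)$ while $p_{N+1}$ applied to the included element gives $t\mapsto((\phi,0)-(x,-t),\epsilon)$, and these coincide. The paper resolves the sign issue you flag simply by translating by $(x,-t)$ in the new coordinate, so the two loops are literally equal rather than equal up to reversal.
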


\begin{proof}
Applying $p_N$ then $\omega_N$ to $(x,\phi,\epsilon)$ yields a path $\gamma$ in $\Omega \Phi_{N+1}^{N+1}$ with $\gamma(t) = ((\phi-x,t),\epsilon)$,
whereas taking the inclusion and then $p_{N+1}$ gives us a path with $\gamma'(t) = ((\phi,0) - (x,-t),\epsilon)$. Therefore, $\gamma = \gamma'$. 
\end{proof}

\begin{prop}
    There is a weak equivalence of spectra between the spectrum formed by the $(\Phi_N^N)$ with maps $\omega_N$, and the Moore spectrum.
\end{prop}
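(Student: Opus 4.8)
The plan is to promote the levelwise equivalences of Lemma~\ref{finaldelooping} to an equivalence of prespectra. Write $\sigma_N\colon\Sigma\Phi_N^N\to\Phi_{N+1}^{N+1}$ for the adjoint of $\omega_N$, so that $(\Phi_N^N,\sigma_N)$ is the prespectrum in the statement. First I would record the Moore side explicitly: since $\mathbb{M}_{n-1}=\mathrm{cofib}(\mathbb{S}\xrightarrow{\times(n-1)}\mathbb{S})$ and $\Sigma^\infty$ is exact, $\mathbb{M}^N_{n-1}=\mathrm{cofib}(S^N\xrightarrow{\times(n-1)}S^N)$ is a Moore space $M(\mathbb{Z}/(n-1),N)$ for $N\geq1$, with $\widetilde H_*=\mathbb{Z}/(n-1)$ concentrated in degree $N$. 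Because reduced suspension commutes with homotopy cofibers and the suspension of a degree-$d$ self-map of $S^N$ ($N\geq1$) is again of degree $d$, the Moore structure map $\tau_N\colon\Sigma\mathbb{M}^N_{n-1}\to\mathbb{M}^{N+1}_{n-1}$ is a weak equivalence; in particular $\mathbb{M}_{n-1}\simeq\Sigma^{-1}\Sigma^\infty\mathbb{M}^1_{n-1}$.

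The real content is to compare the structure maps. Recall that the proof of Lemma~\ref{finaldelooping} presents $\Phi_N^N$ as the homotopy pushout of $U'_n\leftarrow U''_{1n}\to U'_1$, and that Lemmas~\ref{Uprimen},~\ref{unbrin},~\ref{intersection} and~\ref{specialmap} identify this diagram, up to homotopy, with $D^{N+1}\leftarrow S^N\xrightarrow{p^{(N)}}S^N$, where $p^{(N)}$ is the pinch–then–collapse map of Section~\ref{zoom}, of degree $n-1$; thus $\Phi_N^N\simeq\mathrm{cofib}(p^{(N)})$. I would then analyse $\omega_N$ — ``slide the local image along the new coordinate'' — through the black-hole decomposition $\Phi_N^N=U_1\cup U_n$ of Lemma~\ref{openpushout}: a configuration which is a single strand near the origin slides to the loop of a single strand near the origin one dimension higher, and likewise for the one-vertex pieces $U_n$ and $U_{1n}$; unwinding the models $S^N,\,D^{N+1},\,S^N$ one dimension up, $\sigma_N$ gets identified, up to homotopy, with the map of homotopy pushouts induced by the evident map of diagrams
\[
\Sigma\bigl(D^{N+1}\leftarrow S^N\xrightarrow{\,p^{(N)}\,}S^N\bigr)\longrightarrow\bigl(D^{N+2}\leftarrow S^{N+1}\xrightarrow{\,p^{(N+1)}\,}S^{N+1}\bigr),
\]
where on the two sphere legs one uses a homotopy $\Sigma p^{(N)}\simeq p^{(N+1)}$, available because both are self-maps of a sphere of degree $n-1$. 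Passing to homotopy pushouts, this identifies $\sigma_N$ with $\tau_N$ compatibly with Lemma~\ref{finaldelooping}; in particular every $\sigma_N$ is a weak equivalence. Since $\Phi_N^N=\lim_R\Phi_{N,R}^N$ and the $\omega_N$ are defined compatibly at each finite $R$, one runs this comparison at finite $R$ and passes to the limit.

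With this in hand the conclusion is formal. Because all $\sigma_N$ are weak equivalences and the spaces involved have the homotopy type of CW complexes, the iterated structure maps assemble into a strict map of prespectra $\Sigma^\infty\Phi_1^1\to(\Phi_N^N,\sigma_N)[1]$ which is a levelwise weak equivalence, so $(\Phi_N^N,\sigma_N)\simeq\Sigma^{-1}\Sigma^\infty\Phi_1^1$. By Lemma~\ref{finaldelooping}, $\Phi_1^1\simeq\mathbb{M}^1_{n-1}=\mathrm{cofib}(S^1\xrightarrow{\times(n-1)}S^1)$, hence $\Sigma^\infty\Phi_1^1\simeq\mathrm{cofib}(\Sigma\mathbb{S}\xrightarrow{\times(n-1)}\Sigma\mathbb{S})=\Sigma\mathbb{M}_{n-1}$, and therefore $(\Phi_N^N,\sigma_N)\simeq\mathbb{M}_{n-1}$. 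Equivalently, one may directly rigidify the zigzag of levelwise equivalences of Lemma~\ref{finaldelooping}, which by the previous paragraph is compatible with structure maps, into a genuine levelwise equivalence of CW prespectra, hence a stable equivalence.

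The main obstacle is the middle step: checking that $\omega_N$, rewritten through the black-hole decomposition and the sphere/disk models of $U_1$, $U_n$, $U_{1n}$, realizes precisely the suspension of the degree-$(n-1)$ pinch–collapse map $p^{(N)}$ — and not some other degree-$(n-1)$ or degenerate self-map of the sphere. This requires carefully matching the geometry of sliding along the new axis with the homotopy equivalences $f'$, $g'$, $f''$, $g''$ and the maps $\mathrm{pc}$, $\psi_{n,1}$ of Section~\ref{zoom}, together with the bookkeeping of the $R\to\infty$ limit; once that identification is made, everything else is soft.
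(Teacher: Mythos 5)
Your proposal is correct and follows essentially the same route as the paper: both reduce the comparison of structure maps to the pushout presentation $U'_1\leftarrow U''_{1n}\to U'_n$ of $\Phi_N^N$ from Lemma~\ref{finaldelooping} and then verify that the ``sliding'' maps $\omega_N$ are compatible with that decomposition and with the Moore spectrum's structure maps — the paper does this by introducing the auxiliary spectrum $\Phi^{\prime\bullet}_\bullet$ and checking the two comparison squares element by element on $U'_1$ and $U'_n$, which is precisely the verification you isolate as the ``main obstacle.'' Your additional observations (the degree argument giving $\Sigma p^{(N)}\simeq p^{(N+1)}$, and the resulting identification of the prespectrum with $\Sigma^{-1}\Sigma^\infty\Phi_1^1$) are sound but are packaging around the same core check rather than a different argument.
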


\begin{proof}
    Recall the spaces $U'_1(N)$, $U'_n(N)$, and $U''_{1n}(N)$ defined above. 
    We write $\Phi_N^{'N}$ for the pushout of $U'_1(N)\leftarrow U''_{1n}(N)\rightarrow U'_n(N)$.
    The $\Phi_N^{'N}$ form a spectrum, via the maps $\Phi_{N}^{'N}\rightarrow \Omega\Phi_{N+1}^{'N+1}$ which send $(\phi,\epsilon,r)$ to $(\omega_N(\phi,\epsilon),r)$. 

    Per lemma \ref{compatibletwo}, there exists a zigzag of spectra:
    \[\Phi_\bullet^\bullet \leftarrow \Phi_\bullet^{'\bullet} \rightarrow \mathbb{M}^\bullet.\]
    We need to show that the following diagram commutes:
    \begin{center}
    \begin{tikzcd}
        \Phi_N^N\arrow{d}{\omega_N} &\Phi_N^{'N}\arrow{l}\arrow{r}\arrow{d} &\mathbb{M}_{n-1}^N\arrow{d}\\
        \Omega\Phi_{N+1}^{N+1}&\Omega \Phi_{N+1}^{'N+1}\arrow{l}\arrow{r} &\Omega \mathbb{M}_{n-1}^{N+1}.
    \end{tikzcd}
    \end{center}

    Let us check in detail that both squares commute for $(\phi,\epsilon,r) \in U'_1$.
    The left-hand square commutes: going first vertically, $(\phi,\epsilon,r)$ is sent to $(\omega_N(\phi,\epsilon),r)$ then to $\omega_N(\phi,\epsilon)$.
    Going first horizontally, $(\phi,\epsilon,r)$ is sent to $(\phi,\epsilon)$ then $\omega_N(\phi,\epsilon)$.
    
    In the right-hand square, going first horizontally sends $(\phi,\epsilon,r)$ to the midpoint of $\im(\phi)\cap B(0,r)$ denoted by $x$: 
    depending on whether this intersection is empty or not, this is an element in the $N$th cell of $\mathbb{M}^N_{n-1}$ or its 0-cell. 
    The Moore spectrum map sends it to either an element in the $(N+1)$th cell of $\mathbb{M}^{N+1}_{n-1}$, or to the basepoint.
    Indeed, if $\phi$ has no internal vertex in $B(0,r)$, then neither does the translation of $\phi$.
    Similarly, $\phi\cap B(0,r)$ is empty if and only if the intersection of the translation of $\phi$ with a higher-dimensional disk is too. 
    
    In the other direction, $(\omega_N(\phi,\epsilon),r)$ is a path through $U'_1\subset \Phi_{N+1}^{'N+1}$ such that $(\omega_N(\phi,\epsilon),r)(t) = (\phi+(0,\dots,t),\epsilon,r)$.
    The bottom map then sends this to the path $t\mapsto (x,t)$. These two elements are equal, so the diagram commutes.
    We can similarly check commutativity of the diagrams for $(\phi,\epsilon,r) \in U'_n$.
    \end{proof}

\subsection{Group completion}\label{conclusion} 
We are now going to relate the colimit of the classifying spaces of the Higman--Thompson groups (see section \ref{trees}) denoted by $BV_{n,\infty}$ and the monoid given by their disjoint union $\bigsqcup_{1\leq r\leq n-1} BV_{n,r}$.

As stated in the proof of Proposition 1 in \cite{mcduffsegal}, the group completion of the monoid, denoted by $\Omega B(\bigsqcup_{1\leq r\leq n-1} BV_{n,r})$, is equivalent in homology to the colimit 
\[\operatorname{colim} \bigg(\bigsqcup_r BV_{n,r}\xrightarrow{\sigma} \bigsqcup_r BV_{n,r}\xrightarrow{\sigma}\dots\bigg),\]
where the map $\sigma: \bigsqcup BV_{n,r} \rightarrow \bigsqcup BV_{n,r}$ is induced by the maps $BV_{n,r}\rightarrow BV_{n,r+1}$ and the isomorphisms $V_{n,r}\iso V_{n,r+(n-1)}$.

Therefore:
\[\operatorname{colim} \bigg(\bigsqcup BV_{n,r}\xrightarrow{\sigma} \bigsqcup BV_{n,r}\xrightarrow{\sigma}\dots\bigg) \simeq \{1,\dots, n-1\}\times \operatorname{colim}\left(BV_{n,1}\rightarrow\dots\rightarrow BV_{n,r-1}\rightarrow BV_{n,1}\rightarrow \dots\right).\]
Every square in the following diagram commutes \cite[proof of Theorem~3.6]{szymikwahl}:
\begin{center}
    \begin{tikzcd}
        V_{n,1}\arrow{d}\arrow{r}&\dots\arrow{r}\arrow{d} &V_{n,r-1}\arrow{r}\arrow{d} &V_{n,1}\arrow{r}\arrow{d} &\dots\\
        V_{n,1}\arrow{r}&\dots\arrow{r} &V_{n,r-1}\arrow{r} &V_{n,r}\arrow{r} &\dots
    \end{tikzcd}
\end{center}
Therefore, we can see that:
\[\operatorname{colim}\left(BV_{n,1}\rightarrow\dots\rightarrow BV_{n,r-1}\rightarrow BV_{n,1}\rightarrow \dots\right) \simeq BV_{n,\infty}.\]
Finally, there is a homology equivalence:
\begin{align}\label{gpcompletion}
  \{1,\dots, n-1\}\times BV_{n,\infty} \overset{H_*}{\simeq} \Omega B(\bigsqcup BV_{n,r}).
\end{align}

\begin{teo}
There is a homology equivalence :
\[ BV_{n,\infty} \overset{H_*}{\simeq} \Omega^\infty_0 \mathbb{M}_{n-1}.\]
\end{teo}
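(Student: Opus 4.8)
The final theorem is obtained by feeding the scanning delooping of the previous sections into the group completion theorem~\ref{gpcompletion}. The plan is to show that the Higman--Thompson monoid $\bigsqcup_{r} BV_{n,r}$, under juxtaposition of paired forest diagrams, has group completion $\Omega^\infty\mathbb{M}_{n-1}$; to compute the homology of this group completion via Theorem~\ref{gpcompletion} and the calculations of Subsection~\ref{conclusion}; and then to isolate the basepoint component.

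For the first point, Proposition~\ref{deloopinglevelone} identifies $\mathbb{B}\HT_N$ with $\Phi_0^N$, iterating the equivalence~\eqref{delooping} from $\Phi_N^N$ downward gives $\Phi_0^N\simeq\Omega^N\Phi_N^N$, and Lemma~\ref{finaldelooping} identifies $\Phi_N^N$ with the $N$-th space of the Moore spectrum. Lemmas~\ref{compatibletowers} and~\ref{compatibletwo}, together with the fact that the sequence $\{\Phi_N^N\}_N$ with its structure maps $\omega_N$ \emph{is} the Moore spectrum, make all of these identifications compatible as $N$ varies. Taking $N=\infty$, i.e.\ passing to the colimit over $N$, and rewriting $\mathbb{B}\HT\simeq\mathbb{B}\s(\op_{1,n})\simeq\bigsqcup_{r}BV_{n,r}$ by Propositions~\ref{homeq} and~\ref{propgroups}, one reads off that the scanning map exhibits $\Omega^\infty\mathbb{M}_{n-1}$ as the group completion of the monoid $\bigsqcup_{r}BV_{n,r}$; that is, $\Omega B\big(\bigsqcup_{r}BV_{n,r}\big)\simeq\Omega^\infty\mathbb{M}_{n-1}$.

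Next I would combine this with the homology equivalence established in Subsection~\ref{conclusion}, namely $\{1,\dots,n-1\}\times BV_{n,\infty}\overset{H_*}{\simeq}\Omega B\big(\bigsqcup_{r}BV_{n,r}\big)$, which rests on Theorem~\ref{gpcompletion} (whose hypothesis, centrality of $\pi_0$ in the Pontryagin ring, holds since $\pi_0$ is abelian here) together with the periodicity $V_{n,r}\cong V_{n,r+(n-1)}$ and the instant homological stability. This gives $\{1,\dots,n-1\}\times BV_{n,\infty}\overset{H_*}{\simeq}\Omega^\infty\mathbb{M}_{n-1}$. Finally, $\Omega^\infty\mathbb{M}_{n-1}$ is an infinite loop space with $\pi_0\cong\pi_0(\mathbb{M}_{n-1})\cong\mathbb{Z}/(n-1)$, all of whose path components are homotopy equivalent to $\Omega^\infty_0\mathbb{M}_{n-1}$, so it splits as $\{1,\dots,n-1\}\times\Omega^\infty_0\mathbb{M}_{n-1}$; both sides of the homology equivalence then have $n-1$ components, and restricting to a single one yields $BV_{n,\infty}\overset{H_*}{\simeq}\Omega^\infty_0\mathbb{M}_{n-1}$.

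The step I expect to be most delicate is the first one. One must make sure that the chain of scanning equivalences assembles --- at the bottom level $k=0$, where $\Phi_0^N$ is disconnected, and in the colimit over $N$ --- into a statement about the \emph{group completion} of the Higman--Thompson monoid rather than merely an abstract homotopy equivalence of spaces; the compatibility with the spectrum structure maps $\omega_N$ furnished by Lemma~\ref{compatibletwo} is precisely what makes the colimit the infinite loop space $\Omega^\infty\mathbb{M}_{n-1}$ and lets one recognise the bar construction of the monoid. Once these compatibilities are secured, the remaining manipulations with group completion and connected components are formal.
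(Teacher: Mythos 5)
Your proposal follows essentially the same route as the paper: the chain $\mathbb{B}\HT_N\simeq\Phi_0^N\simeq\Omega^N\Phi_N^N\simeq\Omega^N\mathbb{M}^N_{n-1}$ from Proposition~\ref{deloopinglevelone}, the iterated delooping, and Lemma~\ref{finaldelooping}, assembled over $N$ via Lemmas~\ref{compatibletowers} and~\ref{compatibletwo}, then translated through Propositions~\ref{homeq} and~\ref{propgroups} and fed into the group-completion argument of Subsection~\ref{conclusion}. The only difference is that you spell out more explicitly the final bookkeeping (grouplikeness of the monoid, the splitting of $\Omega^\infty\mathbb{M}_{n-1}$ into $n-1$ equivalent components) that the paper leaves implicit, which is a point in your favour rather than a divergence.
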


\begin{proof}
Corollary \ref{loopdeloop} states that
\[\Omega B\Phi_0^N \simeq \Omega^N \Phi_N^N.\]
Combining this with corollary \ref{finaldelooping}, we get: 
\[\Omega B \Phi_0^N \simeq \Omega^N \mathbb{M}^N_{n-1}.\]
Now, proposition \ref{deloopinglevelone} gives us:
\[\Omega B (\mathbb{B}\HT_N)\simeq \Omega^N \mathbb{M}^N_{n-1}.\]
If we let $N$ go to infinity, using compatibility lemmas \ref{compatibletowers} and \ref{compatibletwo}, we get an equivalence with the infinite loop space:
\[\Omega B(\mathbb{B}\HT) \simeq \Omega^\infty \mathbb{M}_{n-1}.\]
Then using proposition \ref{homeq} and proposition \ref{propgroups}, we get:
\[\Omega B(\displaystyle \bigsqcup B V_{n,r}) \simeq \Omega^\infty \mathbb{M}_{n-1}. \]

Using equation \ref{gpcompletion}, we therefore get a homology equivalence with one component of the infinite loop space:
\[BV_{n,\infty} \overset{H_*}{\simeq} \Omega^\infty_0 \mathbb{M}_{n-1}.\qedhere\] 
\end{proof}
\printbibliography

\end{document}